\newtheorem{theorem}{Theorem}
\newtheorem{lemma}[theorem]{Lemma}
\newtheorem{prop}[theorem]{Proposition}
\newtheorem{remark}{Remark}
\newtheorem{claim}{Claim}
\newtheorem{definition}[theorem]{Definition}
\newenvironment{proof-sketch}{\noindent{\bf Sketch of Proof}\hspace*{1em}}{\qed\bigskip}
\newcommand{\RR}{\mathbb R}
\newcommand{\ZZ}{\mathbb Z}
\renewcommand{\leq}{\leqslant}
\renewcommand{\geq}{\geqslant}
\begin{document}

\title[On a class of parametric $(p,2)$-equations]{On a class of parametric $(p,2)$-equations}

\author[N.S. Papageorgiou]{N.S. Papageorgiou}
\address[N.S. Papageorgiou]{National Technical University, Department of Mathematics,
				Zografou Campus, 15780 Athens, Greece 
				\& Institute of Mathematics, Physics and Mechanics, 1000 Ljubljana, Slovenia}
\email{\tt npapg@math.ntua.gr}
\author[V.D. R\u{a}dulescu]{V.D. R\u{a}dulescu}
\address[V.D. R\u{a}dulescu]{Institute of Mathematics, Physics and Mechanics, 1000 Ljubljana, Slovenia \&  Faculty of Applied Mathematics, AGH University of Science and Technology, 30-059 Krak\'ow, Poland}
\email{\tt vicentiu.radulescu@imfm.si}
\author[D.D. Repov\v{s}]{D.D. Repov\v{s}}
\address[D.D. Repov\v{s}]{Faculty of Education and Faculty of Mathematics and Physics,
University of Ljubljana, \& Institute of Mathematics, Physics and Mechanics, 1000 Ljubljana, Slovenia}\email{dusan.repovs@guest.arnes.si}

\keywords{Near resonance, local minimizer, critical group, constant sign and nodal solutions, nonlinear maximum principle.\\
\phantom{aa} 2010 AMS Subject Classification:
35J20, 35J60,  58E05.}

\begin{abstract}
We consider parametric equations driven by the sum of a $p$-Laplacian and a Laplace operator (the so-called $(p,2)$-equations). We study the existence and multiplicity of solutions when the parameter $\lambda>0$ is near the principal eigenvalue $\hat{\lambda}_1(p)>0$ of $(-\Delta_p,W^{1,p}_{0}(\Omega))$. We prove multiplicity results with precise sign information when the near resonance occurs from above and from below of $\hat{\lambda}_1(p)>0$.
\end{abstract}

\maketitle


\section{Introduction}\label{sec1}

Let $\Omega\subseteq\RR^N$ be a bounded domain with a $C^2$-boundary $\partial\Omega$. In this paper, we study the following parametric nonlinear nonhomogeneous Dirichlet problem
\begin{equation}
-\Delta_pu(z)-\Delta u(z)=\lambda|u(z)|^{p-2}u(z)+f(z,u(z))\ \mbox{in}\ \Omega,\ u|_{\partial\Omega}=0,\ 2<p<\infty.  \tag{$P_{\lambda}$} \label{eqP}
\end{equation}

Here $\Delta_p$ denotes the $p$-Laplacian differential operator defined by
$$\Delta_pu=\mbox{div}\,(|Du|^{p-2}Du)\ \mbox{for all}\ u\in W^{1,p}_0(\Omega).$$

Also, $\lambda>0$ is a parameter and $f:\Omega\times\RR\rightarrow\RR$ is a Carath\'eodory perturbation (that is, for all $x\in\RR$, $z\longmapsto f(z,x)$ is measurable and for a.a. $z\in\Omega$, $x\longmapsto f(z,x)$ is continuous).

Our aim in this paper is to study the existence and multiplicity of nontrivial solutions when the parameter $\lambda>0$ is near the principal eigenvalue $\hat{\lambda}_1(p)>0$ of $(-\Delta_p,W^{1,p}_{0}(\Omega))$ either from the left or from the right. Such equations, which are near resonance, were first investigated by Mawhin and Schmitt \cite{18}, \cite{19} (for semilinear Dirichlet and periodic problems, respectively). Subsequently, their work was extended by Badiale and Lupo \cite{4}, Chiappinelli, Mawhin and Nugari \cite{8} and Ramos and Sanchez \cite{30}. All these papers consider semilinear elliptic equations driven by the Laplacian. Extensions to equations driven by the $p$-Laplacian were obtained by Ma, Ramos and Sanchez \cite{17} and Papageorgiou and Papalini \cite{22}.

In this work we extend the analysis to $(p,2)$-equations (that is, equations driven by the sum of a $p$-Laplacian $(p>2)$ and a Laplacian). We stress that the differential operator in \eqref{eqP} is nonhomogeneous and this is a source of difficulties in the analysis of the problem \eqref{eqP}. We note that $(p,2)$-equations arise in many physical applications (see Cherfils and Ilyasov \cite{7}) and recently such equations were studied by Barile and Figueiredo \cite{barile}, Carvalho, Goncalves and da Silva \cite{carval}, Chaves, Ercole and Miyagaki \cite{chaves}, Mugnai and Papageorgiou \cite{20}, Papageorgiou and R\u{a}dulescu \cite{23}, \cite{24}, \cite{25} and Papageorgiou and Winkert \cite{27}, \cite{28}.

Our approach is variational, based on the critical point theory, together with suitable truncation and comparison techniques, and Morse theory (critical groups). In the next section, for the convenience of the reader, we recall the main mathematical tools which we will use in the paper.

\section{Mathematical Background}\label{sec2}

The topological notion of linking sets is central in the critical point theory.
\begin{definition}\label{def1}
	Let $Y$ be a Hausdorff topological space and $E_0,\, E,D$ be closed subspaces of $Y$ such that $E_0\subseteq E$. We say that the pair $\{E_0,E\}$ is linking with $D$ in $Y$, if
	\begin{itemize}
		\item[(a)] $E_0\cap D=\emptyset $; and
		\item[(b)] for every $\gamma\in C(E,Y)$ such that $\gamma|_{E_0}=id|_{E_0}$, we have $\gamma(E)\cap D\neq\emptyset $.
	\end{itemize}
\end{definition}
	
	Now, let $X$ be a Banach space and $X^*$ its topological dual. By $\left\langle \cdot,\cdot\right\rangle$ we denote the duality brackets for the pair $(X,X^*)$. Given $\varphi\in C^1(X)$, we say that $\varphi$ satisfies the Cerami condition (the $C$-condition for short), if the following is true:\\
		``If $\{u_n\}_{n\geq 1}\subseteq X$ is a sequence such that $\{\varphi(u_n)\}_{n\geq 1}\subseteq\RR$ is bounded and
		$$(1+||u_n||)\varphi'(u_n)\rightarrow 0\ \mbox{in}\ W^{-1,p'}(\Omega)=W^{1,p}_{0}(\Omega)^*\left(\frac{1}{p}+\frac{1}{p'}=1\right)\ \mbox{as}\ n\rightarrow\infty,$$
		then it admits a strongly convergent subsequence".

This is a compactness-type condition on the functional $\varphi$, which compensates for the fact that the ambient space $X$ need not be locally compact (since $X$ is in general, infinite dimensional). The $C$-condition is important in developing a minimax theory for the critical values of $\varphi$. A basic result in that theory is the following theorem which involues the notion of linking sets (see, for example, Gasinski and Papageorgiou \cite[p. 644]{13}).
\begin{theorem}\label{th2}
	If $X$ is a Banach space, $E_0,E$ and $D$ are nonempty closed subsets of $X$ such that the pair $\{E_0,E\}$ is linking with $D$ in $X$ (see Definition \ref{def1}), $\varphi\in C^1(X)$ and satisfies the $C$-condition, $\sup_{E_0} \varphi<\inf_D \varphi$ and
	$$c=\inf_{\gamma\in\Gamma}\sup_{u\in E}\ \varphi(\gamma(u))\ \mbox{with}\ \Gamma=\{\gamma\in C(E,X):\gamma|_{E_0}=id|_{E_0}\},$$
	then $c\geq \inf_D \varphi$ and $c$ is a critical value of $\varphi$.
\end{theorem}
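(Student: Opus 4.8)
The plan is to run the classical minimax argument attached to a linking structure: first I would pin down the candidate level $c$ from below by purely topological means (this gives the inequality $c\ge\inf_D\varphi$), and then I would show, arguing by contradiction, that $c$ must be a critical value, using a deformation argument powered by the $C$-condition.

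First I would check that $c$ is well posed and that $c\ge\inf_D\varphi$. Since $id|_E\in\Gamma$ we get $c\le\sup_E\varphi<+\infty$ (in the situations where the theorem is applied $\varphi$ is bounded above on $E$), and since $E_0\ne\emptyset$ we have $-\infty<\sup_{E_0}\varphi<\inf_D\varphi$, so $\inf_D\varphi\in\RR$. Now fix an arbitrary $\gamma\in\Gamma$. By part (b) of Definition \ref{def1} there is some $u\in E$ with $\gamma(u)\in D$, hence $\sup_{v\in E}\varphi(\gamma(v))\ge\varphi(\gamma(u))\ge\inf_D\varphi$; taking the infimum over $\gamma\in\Gamma$ yields $c\ge\inf_D\varphi$. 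In particular $c\in\RR$ and, what matters for the sequel, $c>\sup_{E_0}\varphi$.

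Next I would assume for contradiction that $c$ is not a critical value, i.e. $K_c=\{u\in X:\varphi'(u)=0,\ \varphi(u)=c\}=\emptyset$, and invoke the deformation theorem valid under the $C$-condition (see, e.g., \cite{13}). Fixing in advance $\varepsilon_0\in(0,c-\sup_{E_0}\varphi)$ (possible by the previous paragraph), this theorem produces $\varepsilon\in(0,\varepsilon_0)$ and a continuous map $h:[0,1]\times X\to X$ such that $h(0,\cdot)=id_X$; $\varphi(h(t,u))\le\varphi(u)$ for all $(t,u)\in[0,1]\times X$; $h(t,u)=u$ whenever $\varphi(u)\notin(c-\varepsilon_0,c+\varepsilon_0)$; and $\varphi(h(1,u))\le c-\varepsilon$ whenever $\varphi(u)\le c+\varepsilon$. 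By the definition of $c$ as an infimum I would then pick $\gamma_0\in\Gamma$ with $\sup_{u\in E}\varphi(\gamma_0(u))\le c+\varepsilon$ and set $\gamma_1:=h(1,\gamma_0(\cdot))$. Then $\gamma_1\in C(E,X)$ as a composition of continuous maps; for $u\in E_0$ one has $\gamma_0(u)=u$ and $\varphi(u)\le\sup_{E_0}\varphi<c-\varepsilon_0$, so $\gamma_1(u)=h(1,u)=u$, which shows $\gamma_1|_{E_0}=id|_{E_0}$, i.e. $\gamma_1\in\Gamma$; while for every $u\in E$ the bound $\varphi(\gamma_0(u))\le c+\varepsilon$ forces $\varphi(\gamma_1(u))\le c-\varepsilon$. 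Hence $\sup_{u\in E}\varphi(\gamma_1(u))\le c-\varepsilon<c$, contradicting $c=\inf_{\gamma\in\Gamma}\sup_{u\in E}\varphi(\gamma(u))$. Therefore $K_c\ne\emptyset$, that is, $c$ is a critical value of $\varphi$.

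The routine parts are the lower bound in the second paragraph and the path-deformation bookkeeping in the contradiction step; the genuine obstacle is the \emph{deformation theorem} being invoked. Because the $C$-condition carries the weight $(1+\|u_n\|)$, the classical deformation lemma does not apply verbatim: the descent flow must be built from a locally Lipschitz pseudo-gradient vector field for $\varphi$, rescaled by the factor $(1+\|u\|)$ and damped by a cut-off vanishing away from the level $c$, so that the associated Cauchy problem has globally defined solutions and its trajectories generate precisely the Cerami sequences that the $C$-condition controls. Verifying that this modified flow still delivers the four properties used above is the technical heart of the argument; once it is available, the linking hypothesis enters only through the nonemptiness of $\gamma(E)\cap D$ and the level separation $\sup_{E_0}\varphi<\inf_D\varphi$, and the conclusion follows exactly as above.
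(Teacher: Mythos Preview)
The paper does not supply its own proof of this theorem: it is quoted as a known result from Gasinski and Papageorgiou \cite[p.~644]{13}, so there is no in-paper argument to compare against. Your proposal is the standard and correct proof of the linking theorem: the lower bound $c\ge\inf_D\varphi$ follows directly from the intersection property in Definition~\ref{def1}(b), and the contradiction step via the quantitative deformation lemma (adapted to the $C$-condition through the $(1+\|u\|)$-rescaled pseudo-gradient flow) is exactly how the result is established in the cited reference. One small caveat: the theorem as stated does not explicitly assume $\sup_E\varphi<\infty$, and your parenthetical ``in the situations where the theorem is applied $\varphi$ is bounded above on $E$'' is the right way to handle this---in the paper $E$ is always a compact set (a segment or a finite set), so continuity of $\varphi$ guarantees $c<\infty$ and the deformation argument goes through.
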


With suitable choices of the linking sets, we obtain the well-known mountain pass theorem, saddle point theorem and the generalized mountain pass theorem (see \cite{13}). For future use, we state the mountain pass theorem.
\begin{theorem}\label{th3}
	If $X$ is a Banach space, $\varphi\in C^1(X)$ and satisfies the $C$-condition, $u_0,u_1\in X$
	$$\max\{\varphi(u_0),\varphi(u_1)\}<\inf\left[\varphi(u):||u-u_0||=\rho\right]=m_{\rho},\ ||u_1-u_0||>\rho>0$$
	and $c=\displaystyle\inf_{\gamma\in\Gamma}\max_{0\leq t\leq 1} \varphi(\gamma(t))$ with $\Gamma=\{\gamma\in C([0,1],X):\gamma(0)=u_0,\ \gamma(1)=u_1\}$, then $c\geq m_{\rho}$ and $c$ is a critical value of $\varphi$.
\end{theorem}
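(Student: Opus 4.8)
The plan is to obtain Theorem \ref{th3} as a special case of the linking theorem (Theorem \ref{th2}), by exhibiting the mountain pass geometry as a linking configuration. Concretely, I would set $E_0=\{u_0,u_1\}$, let $E=\{(1-t)u_0+tu_1:t\in[0,1]\}$ be the line segment joining $u_0$ and $u_1$, and let $D=\{u\in X:\|u-u_0\|=\rho\}$ be the sphere of radius $\rho$ centred at $u_0$. All three are nonempty closed subsets of $X$. Condition (a) of Definition \ref{def1} is immediate: since $\|u_0-u_0\|=0$ and $\|u_1-u_0\|>\rho$, neither endpoint lies on $D$, so $E_0\cap D=\emptyset$.

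For condition (b), fix $\gamma\in C(E,X)$ with $\gamma|_{E_0}=id|_{E_0}$ and consider the real function $g(t)=\|\gamma((1-t)u_0+tu_1)-u_0\|$ on $[0,1]$. It is continuous, with $g(0)=\|u_0-u_0\|=0<\rho$ and $g(1)=\|u_1-u_0\|>\rho$; hence by the intermediate value theorem there is $t_*\in(0,1)$ with $g(t_*)=\rho$, that is, $\gamma((1-t_*)u_0+t_*u_1)\in D$. Thus $\gamma(E)\cap D\neq\emptyset$, so $\{E_0,E\}$ links with $D$ in $X$. Moreover the standing hypothesis $\max\{\varphi(u_0),\varphi(u_1)\}<m_\rho$ reads precisely $\sup_{E_0}\varphi<\inf_D\varphi$, so all the assumptions of Theorem \ref{th2} are in force.

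It then remains to match the minimax value produced by Theorem \ref{th2}, namely $\tilde c=\inf_{\gamma\in\tilde\Gamma}\sup_{u\in E}\varphi(\gamma(u))$ with $\tilde\Gamma=\{\gamma\in C(E,X):\gamma|_{E_0}=id|_{E_0}\}$, with the value $c$ in the statement. Since $u_0\neq u_1$ (because $\|u_1-u_0\|>\rho>0$), the map $e:[0,1]\to E$, $e(t)=(1-t)u_0+tu_1$, is a homeomorphism fixing the endpoints, so $\gamma\mapsto\gamma\circ e$ is a bijection from $\tilde\Gamma$ onto $\Gamma=\{\gamma\in C([0,1],X):\gamma(0)=u_0,\ \gamma(1)=u_1\}$, and under this identification $\sup_{u\in E}\varphi(\gamma(u))=\max_{0\le t\le1}\varphi(\gamma(e(t)))$. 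Hence $\tilde c=c$, and Theorem \ref{th2} gives $c=\tilde c\geq\inf_D\varphi=m_\rho$ together with the fact that $c$ is a critical value of $\varphi$, which is the claim.

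I do not expect a genuine obstacle here: the deduction is purely formal once Theorem \ref{th2} is available, the only points needing a line of care being the linking verification (b) via the intermediate value theorem and the reparametrisation identifying the two forms of $\Gamma$. The substantive analytic content — the construction of a deformation through a (locally Lipschitz) pseudo-gradient flow, with a time rescaling that converts the Cerami bound $(1+\|u_n\|)\|\varphi'(u_n)\|_*\to0$ into a usable descent estimate — is exactly what is packaged inside Theorem \ref{th2}, and would be the hard part of any self-contained argument.
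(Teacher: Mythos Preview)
Your proposal is correct and matches the paper's own approach: the paper does not give a detailed proof of Theorem~\ref{th3} but simply remarks that it follows from Theorem~\ref{th2} by taking $E_0=\{u_0,u_1\}$, $E=\{tu_1+(1-t)u_0:t\in[0,1]\}$, and $D=\partial B_\rho(u_0)$. You have filled in exactly the verifications (linking via the intermediate value theorem, reparametrisation of $\Gamma$) that the paper leaves implicit.
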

\begin{remark}
	It is easy to see that Theorem \ref{th3} can be deduced from Theorem \ref{th2}, if we consider\\
	$E_0=\{u_0,u_1\},\ E=\{u\in X:u=tu_1+(1-t)u_0,t\in[0,1]\},\ D=\partial B_{\rho}(u_0)=\{u\in X:||u-u_0||=\rho\}$.
\end{remark}

In this analysis of problem \eqref{eqP}, we will use the Sobolev space $W^{1,p}_{0}(\Omega)$ and the Banach space $C^{1}_{0}(\overline{\Omega})=\left\{u\in C^1(\overline{\Omega}):u|_{\partial\Omega}=0\right\}$. The latter is an ordered Banach space with positive cone $C_+=\{u\in C^{1}_{0}(\overline{\Omega});u(z)\geq 0\ \mbox{for all}\ z\in\overline{\Omega}\}$. This cone has nonempty interior given by
$$\mbox{int}\, C_+=\left\{u\in C_+:u(z)>0\ \mbox{for all}\ z\in\Omega,\ \left.\frac{\partial u}{\partial n}\right|_{\partial\Omega}<0\right\}.$$

Here $n(\cdot)$ denotes the outward unit normal on $\partial\Omega$.

In what follows, by $||\cdot||$ we denote the norm of the Sobolev space $W^{1,p}_{0}(\Omega)$. By virtue of the Poincar\'e inequality, we have
$$||u||=||Du||_p\ \mbox{for all}\ u\in W^{1,p}_{0}(\Omega).$$

Next, we present some basic facts about the spectrum of $(-\Delta_q,W^{1,q}_{0}(\Omega))$ with $1<q<\infty$. So, we consider the following nonlinear eigenvalue problem
$$-\Delta_qu(z)=\hat{\lambda}|u(z)|^{q-2}u(z)\ \mbox{in}\ \Omega,\ u|_{\partial\Omega}=0.$$

We say that $\hat{\lambda}\in\RR$ is an eigenvalue of $(-\Delta_q,W^{1,q}_{0}(\Omega))$, if the above equation admits a nontrivial solution $\hat{u}\in W^{1,q}_{0}(\Omega)$. We say that $\hat{u}$ is an eigenfunction corresponding to the eigenvalue $\hat{\lambda}$. We know that there exists a smallest eigenvalue $\hat{\lambda}_1(q)$ with the following properties:
\begin{itemize}
	\item[(i)] $\hat{\lambda}_1(q)>0$;
	\item[(ii)] $\hat{\lambda}_1(q)$ is isolated, that is, there exists $\epsilon>0$ such that $(\hat{\lambda}_1(q),\hat{\lambda}_1(q)+\epsilon)$ contains no eigenvalue of $(-\Delta_q,W^{1,q}_{0}(\Omega))$; and
	\item[(iii)] $\hat{\lambda}_1(q)$ is simple, that is, if $\hat{u},\hat{v}$ are eigenfunctions corresponding to $\hat{\lambda}_1(q)$, then $\hat{u}=\xi\hat{v}$ for some $\xi\in\RR\backslash\{0\}$.
\end{itemize}

Moreover, $\hat{\lambda}_1(q)$ admits the following variational characterization
\begin{equation}\label{eq1}
	\hat{\lambda}_1(q)=\inf\left[\frac{||Du||^{q}_{q}}{||u||^{q}_{q}}:u\in W^{1,q}_{0}(\Omega),\ u\neq 0\right].
\end{equation}

In (\ref{eq1}) the infimum is realized on the corresponding one-dimensional eigenspace. By (\ref{eq1}) it is clear that the elements of this eigenspace do not change the sign. By $\hat{u}_1(q)$ we denote the positive, $L^p$-normalized (that is, $||\hat{u}_1(q)||_q=1$) eigenfunction corresponding to $\hat{\lambda}_1(q)>0$. From the nonlinear regularity theory and the nonlinear maximum principle (see, for example, Gasinski and Papageorgiou \cite[pp. 737-738]{13}), it follows that $\hat{u}_1(q)\in \mbox{int}\, C_+$.

Let $\sigma(q)$ denote the set of eigenvalues of $(-\Delta_q,W^{1,q}_{0}(\Omega))$. It is easy to check that this set is closed. Since $\hat{\lambda}_1(q)>0$ is isolated, the second eigenvalue $\hat{\lambda}^{*}_{2}(q)$ is well-defined by
$$\hat{\lambda}^{*}_{2}(q)=\inf[\hat{\lambda}\in\sigma(q):\hat{\lambda}>\hat{\lambda}_1(q)].$$

If $N=1$ (ordinary differential equations), then $\sigma(q)=\{\hat{\lambda}_k(q)\}_{k\geq 1}$ with each $\hat{\lambda}_k(q)$ being a simple eigenvalue and $\hat{\lambda}_k(q)\uparrow +\infty$ as $k\rightarrow\infty$ and the corresponding eigenfunctions $\{\hat{u}_k(q)\}_{k\geq 1}$ have exactly $k-1$ zeros. If $N\geq 2$ (partial differential equations), then using the Ljusternik-Schnirelmann minimax scheme, we can produce a strictly increasing sequence $\{\hat{\lambda}_k(q)\}_{k\geq 1}\subseteq\sigma(q)$ such that $\hat{\lambda}_k(q)\rightarrow+\infty$ as $k\rightarrow\infty$. However, we do not know if this is the complete list of all eigenvalues. We know that $\hat{\lambda}^{*}_{2}(q)=\hat{\lambda}_2(q)$, that is, the second eigenvalue and the second Ljusternik-Schnirelmann eigenvalue coincide. The Ljusternik-Schnirelmannn theory gives a minimax  characterization of $\hat{\lambda}_2(q)$. For our purposes, this characterization is not convenient. Instead, we will us an altern
 ative one due to Cuesta, de Figueiredo and Gossez \cite{10}.
\begin{prop}\label{prop4}
	If $\partial B^{L^q}_{1}=\{u\in L^q(\Omega):||u||_q=1\},\ M=W^{1,q}_{0}(\Omega)\cap \partial B^{L^q}$, and
	$$\Gamma_0=\{\gamma_0\in C([-1,1],M):\gamma_0(-1)=-\hat{u}_1(q),\ \gamma_0(1)=\hat{u}_1(q)\}$$
	then $\hat{\lambda}_2(q)=\inf_{\gamma_0\in\Gamma_0}\max_{-1\leq t\leq 1}||D\gamma_0(t)||^{q}_{q}$.
\end{prop}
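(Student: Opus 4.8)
The plan is to recognize $\hat{\lambda}_2(q)$ as a mountain-pass value of the $C^1$ functional $\psi(u):=\|Du\|_q^q$ restricted to the $C^1$ Banach submanifold $M$ of $W^{1,q}_0(\Omega)$. By the Lagrange multiplier rule the critical points of $\psi|_M$ are exactly the eigenfunctions of $(-\Delta_q,W^{1,q}_0(\Omega))$ and the critical values are exactly the eigenvalues; moreover $\psi|_M$ satisfies the Palais--Smale condition, since a sequence $u_n\in M$ with $\psi(u_n)$ bounded is bounded in $W^{1,q}_0(\Omega)$, hence (up to a subsequence) $u_n\rightharpoonup u$ in $W^{1,q}_0(\Omega)$ and $u_n\to u$ in $L^q(\Omega)$ by the compact embedding, and the $(S_+)$-property of $-\Delta_q$ then forces strong convergence. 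Writing $c:=\inf_{\gamma_0\in\Gamma_0}\max_{[-1,1]}\psi\circ\gamma_0$, I would establish $c=\hat{\lambda}_2(q)$ through the two inequalities.

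For $c\geq\hat{\lambda}_2(q)$ one would first check, using \eqref{eq1}, weak lower semicontinuity of $\psi$, the compact embedding into $L^q(\Omega)$, and the fact that in the uniformly convex space $W^{1,q}_0(\Omega)$ weak convergence plus convergence of norms forces strong convergence, that every sequence $u_n\in M$ with $\psi(u_n)\to\hat{\lambda}_1(q)$ has a subsequence converging strongly to $\hat{u}_1(q)$ or to $-\hat{u}_1(q)$. Consequently, fixing $0<\delta<2\|D\hat{u}_1(q)\|_q$, the quantity $m_\delta:=\inf\{\psi(u):u\in M,\ \|Du-D\hat{u}_1(q)\|_q=\delta\}$ satisfies $m_\delta>\hat{\lambda}_1(q)$. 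Since every $\gamma_0\in\Gamma_0$ joins $-\hat{u}_1(q)$ to $\hat{u}_1(q)$, the continuous function $t\mapsto\|D\gamma_0(t)-D\hat{u}_1(q)\|_q$ attains the value $\delta$, whence $\max_{[-1,1]}\psi\circ\gamma_0\geq m_\delta$ and $c\geq m_\delta>\hat{\lambda}_1(q)$. A standard deformation argument on $M$ — the endpoints $\pm\hat{u}_1(q)$ sit at level $\hat{\lambda}_1(q)<c$ and are therefore fixed by the deformation — then shows that $c$ is a critical value of $\psi|_M$, i.e. $c\in\sigma(q)$; with $c>\hat{\lambda}_1(q)$ this yields $c\geq\hat{\lambda}_2(q)$ by the definition $\hat{\lambda}_2(q)=\inf\{\hat{\lambda}\in\sigma(q):\hat{\lambda}>\hat{\lambda}_1(q)\}$.

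For $c\leq\hat{\lambda}_2(q)$ I would build an explicit element of $\Gamma_0$ along which $\psi\leq\hat{\lambda}_2(q)$. Pick an eigenfunction $\varphi_2$ for $\hat{\lambda}_2(q)$; it changes sign, for otherwise \eqref{eq1} and the simplicity of $\hat{\lambda}_1(q)$ would make $\varphi_2$ a scalar multiple of $\hat{u}_1(q)$, with eigenvalue $\hat{\lambda}_1(q)$. Testing the eigenvalue equation with $\varphi_2^{\pm}$ gives $\|D\varphi_2^{\pm}\|_q^q=\hat{\lambda}_2(q)\|\varphi_2^{\pm}\|_q^q$, and since $\varphi_2^+,\varphi_2^-$ have disjoint supports a direct computation gives $\psi\big((r\varphi_2^+-s\varphi_2^-)/\|r\varphi_2^+-s\varphi_2^-\|_q\big)=\hat{\lambda}_2(q)$ for all $(r,s)\in\RR^2\setminus\{0\}$; letting $(r,s)$ run over a semicircle yields a path $\beta$ in $M$ from $-a^+$ to $a^+$, where $a^{\pm}:=\varphi_2^{\pm}/\|\varphi_2^{\pm}\|_q\in M$, with $\psi\circ\beta\equiv\hat{\lambda}_2(q)$. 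It then remains to join $\hat{u}_1(q)$ to $a^+$ in $M$ keeping $\psi\leq\hat{\lambda}_2(q)$, and here I would invoke the hidden-convexity (Picone-type) inequality: for $w_1,w_2\geq0$ in $W^{1,q}_0(\Omega)$ and $t\in[0,1]$ the function $\sigma_t:=((1-t)w_1^q+tw_2^q)^{1/q}$ satisfies $\|D\sigma_t\|_q^q\leq(1-t)\|Dw_1\|_q^q+t\|Dw_2\|_q^q$. Taking $w_1=\hat{u}_1(q)$, $w_2=a^+$ gives $\|\sigma_t\|_q=1$, so $\sigma_t\in M$, and $\psi(\sigma_t)\leq(1-t)\hat{\lambda}_1(q)+t\hat{\lambda}_2(q)\leq\hat{\lambda}_2(q)$, with $\sigma_0=\hat{u}_1(q)$ and $\sigma_1=a^+$; using $\hat{u}_1(q),\varphi_2\in C^1(\overline{\Omega})$ together with Hopf's lemma (so that $\hat{u}_1(q)$ is bounded below, and $|\varphi_2|$ above, by a constant times the distance to $\partial\Omega$) one dominates $|D\sigma_t|$ and deduces, by dominated convergence, that $\alpha:t\mapsto\sigma_t$ is continuous into $W^{1,q}_0(\Omega)$. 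Concatenating the paths $-\alpha$ (from $-\hat{u}_1(q)$ to $-a^+$), $\beta$ (from $-a^+$ to $a^+$), and $\alpha$ run backwards (from $a^+$ to $\hat{u}_1(q)$), and reparametrizing on $[-1,1]$, produces $\gamma_0^{*}\in\Gamma_0$ with $\max_{[-1,1]}\psi\circ\gamma_0^{*}\leq\hat{\lambda}_2(q)$; hence $c\leq\hat{\lambda}_2(q)$, and $c=\hat{\lambda}_2(q)$.

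The lower bound is the routine half — a mountain-pass/deformation argument once the Palais--Smale condition and the compactness of $\hat{\lambda}_1(q)$-minimizing sequences are recorded. The hard part will be the upper bound, and inside it the step of connecting $\hat{u}_1(q)$ to the normalized positive part $a^+$ of a sign-changing second eigenfunction without letting $\psi$ exceed $\hat{\lambda}_2(q)$: the naive linear homotopy $(1-t)\hat{u}_1(q)+ta^+$, once renormalized in $L^q(\Omega)$, can overshoot $\hat{\lambda}_2(q)$, because the normalizing factor $\|(1-t)\hat{u}_1(q)+ta^+\|_q$ can be strictly less than $1$. The hidden-convexity substitution $\sigma_t=((1-t)\hat{u}_1(q)^q+t(a^+)^q)^{1/q}$ repairs this — it has $L^q(\Omega)$-norm exactly $1$ and its energy is bounded for free by the convexity inequality — but one still has to verify its continuity in $W^{1,q}_0(\Omega)$, which is where the boundary regularity of $\hat{u}_1(q)$ and $\varphi_2$ (Hopf's lemma) is used.
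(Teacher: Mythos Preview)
The paper does not supply a proof of this proposition at all: it is stated in Section~2 as a known result, attributed explicitly to Cuesta, de Figueiredo and Gossez \cite{10}, and used as a black box thereafter (e.g.\ in Lemma~\ref{lem9} and Proposition~\ref{prop20}). So there is no ``paper's own proof'' to compare your attempt against.

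That said, your sketch is a faithful outline of the argument in the cited reference. The lower bound via the Palais--Smale condition on $M$ and a deformation lemma, together with the compactness of $\hat{\lambda}_1(q)$-minimizing sequences to get $c>\hat{\lambda}_1(q)$, is exactly the standard route. For the upper bound, constructing an admissible path by splicing together (i) the ``hidden convexity'' curve $\sigma_t=((1-t)\hat{u}_1(q)^q+t(a^+)^q)^{1/q}$ joining $\hat{u}_1(q)$ to the normalized positive part $a^+$ of a sign-changing second eigenfunction, and (ii) a half-circle in the two-dimensional cone spanned by $\varphi_2^{\pm}$, is precisely the construction in \cite{10}; your identification of the continuity of $t\mapsto\sigma_t$ in $W^{1,q}_0(\Omega)$ (requiring $C^1$-regularity and Hopf's lemma for $\hat{u}_1(q)$) as the delicate point is also accurate. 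Incidentally, the convexity inequality you invoke is the $q$-analogue of the Benguria--Brezis--Lieb lemma the paper itself uses later in the proof of Proposition~\ref{prop24}.
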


We mention that $\hat{\lambda}_1(q)>0$ is the only eigenvalue with eigenfunctions of constant sign. Every other eigenvalue has nodal (that is, sign-changing) eigenfunctions.

When $q=2$ (linear eigenvalue problem), then $\sigma(2)=\{\hat{\lambda}_k(2)\}_{k\geq 1}$. In this case, the eigenspaces are linear spaces. By $E(\hat{\lambda}_k(2))$, we denote the eigenspace corresponding to the eigenvalue $\hat{\lambda}_k(2)$. The regularity theory implies that $E(\hat{\lambda}_k(2))\subseteq C^{1}_{0}(\overline{\Omega})$. Moreover, $E(\hat{\lambda}_k(2))$ has the so-called unique continuation property, that is, if $u\in E(\hat{\lambda}_k(2))$ and vanishes on a set of positive Lebesgue measure, then $u\equiv 0$. In this case all eigenvalues admit variational characterization, namely
\begin{equation}\label{eq2}
	\hat{\lambda}_1(2)=\inf\left[\frac{||Du||^{2}_{2}}{||u||^{2}_{2}}:u\in H^{1}_{0}(\Omega),u\neq 0\right]
\end{equation}
and for $k\geq 2$, we have
\begin{eqnarray}\label{eq3}
	\hat{\lambda}_k(2)&=&\sup\left[\frac{||Du||^{2}_{2}}{||u||^{2}_{2}}:u\in \overset{k}{\underset{\mathrm{i=1}}\oplus}E(\hat{\lambda}_i(2)),u\neq 0\right]\nonumber\\
	&=&\inf\left[\frac{||Du||^{2}_{2}}{||u||^{2}_{2}}:u\in\overline{{\underset{\mathrm{i\geq k}}\oplus}E(\hat{\lambda}_i(2))},u\neq 0\right].
\end{eqnarray}

In (\ref{eq2}) the infimum is realized on $E(\hat{\lambda}_1(2))$, while in (\ref{eq3}) both the supremum and the infimum are realized on $E(\hat{\lambda}_k(2))$.

From the variational characterizations in (\ref{eq2}) and (\ref{eq3}) and the unique continuation property, we have the following result (see Papageorgiou and Kyritsi \cite{21}).
\begin{prop}\label{prop5}
	\begin{itemize}
		\item[(a)] If $k\geq 1,\ \vartheta\in L^{\infty}(\Omega)$, $\vartheta(z)\leq\hat{\lambda}_k(2)$ for a.a. $z\in\Omega$ and $\vartheta\not\equiv\hat{\lambda}_k(2)$, then there exists $\hat{\xi}_0>0$ such that
		$$||Du||^{2}_{2}-\int_{\Omega}\vartheta(z)u(z)^2dz\geq \hat{\xi}_0||u||^2\ \mbox{for all}\ u\in\overline{{\underset{\mathrm{i\geq k}}\oplus}E({\hat{\lambda}_k(2))}}.$$
		\item[(b)] If $k\geq 1$, $\vartheta\in L^{\infty}(\Omega)$, $\vartheta(z)\geq\hat{\lambda}_k(2)$ for a.a. $z\in\Omega$ and $\vartheta\not\equiv\hat{\lambda}_k(2)$, then there exists $\hat{\xi}_1>0$ such that
		$$||Du|||^{2}_{2}-\int_{\Omega}\vartheta(z)u(z)^2dz\leq-\hat{\xi}_1||u||^2\ \mbox{for all}\ u\in{\overset{k}{\underset{\mathrm{i=1}}\oplus}E(\hat{\lambda}_i(2))}.$$
	\end{itemize}
\end{prop}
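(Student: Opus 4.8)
The plan is to prove both parts simultaneously by reducing each inequality to the variational characterizations in \eqref{eq2} and \eqref{eq3}, and then upgrading the resulting non-strict estimates to strict ones via the unique continuation property together with a compactness argument. I will concentrate on part (a); part (b) is entirely analogous (with the roles of the finite-dimensional and infinite-codimensional subspaces interchanged and the signs reversed), so I would only indicate the modifications at the end.

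First, set $H_k = \overline{\bigoplus_{i\geq k}E(\hat{\lambda}_i(2))}$. From the second equality in \eqref{eq3} (for $k\geq 2$; for $k=1$ use \eqref{eq2}), we have $\|Du\|_2^2 \geq \hat{\lambda}_k(2)\|u\|_2^2$ for all $u\in H_k$. Hence, using $\vartheta(z)\leq\hat{\lambda}_k(2)$ a.e.,
\begin{equation*}
\|Du\|_2^2 - \int_\Omega\vartheta(z)u(z)^2\,dz \;\geq\; \|Du\|_2^2 - \hat{\lambda}_k(2)\|u\|_2^2 \;\geq\; 0 \qquad\text{for all }u\in H_k,
\end{equation*}
so the quadratic form is at least non-negative on $H_k$. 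The substance of the statement is the uniform positivity, i.e.\ the coercivity-type bound with a constant $\hat{\xi}_0>0$.

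To obtain this, I would argue by contradiction: suppose no such $\hat{\xi}_0$ exists. Then there is a sequence $\{u_n\}_{n\geq 1}\subseteq H_k$ with $\|u_n\|=1$ (note $H_k\subseteq C^1_0(\overline\Omega)$, but the relevant norm here is the $H^1_0$-norm) and $\|Du_n\|_2^2 - \int_\Omega\vartheta u_n^2\,dz \to 0$. By reflexivity, pass to a subsequence with $u_n\rightharpoonup u$ in $H^1_0(\Omega)$ and $u_n\to u$ in $L^2(\Omega)$ (Sobolev embedding); since $H_k$ is closed and convex, $u\in H_k$. From weak lower semicontinuity of $u\mapsto\|Du\|_2^2$ and strong $L^2$-convergence of the $\vartheta$-term, we get $\|Du\|_2^2 - \int_\Omega\vartheta u^2\,dz \leq \liminf(\|Du_n\|_2^2-\int_\Omega\vartheta u_n^2) = 0$, while the pointwise inequality above forces this quantity to be $\geq 0$; hence $\|Du\|_2^2 = \int_\Omega\vartheta u^2\,dz$ and moreover $\|Du_n\|_2\to\|Du\|_2$, which together with $u_n\rightharpoonup u$ in the Hilbert space $H^1_0(\Omega)$ gives $u_n\to u$ strongly, so $\|u\|=1$ and in particular $u\not\equiv 0$. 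Now $0 = \|Du\|_2^2 - \int_\Omega\vartheta u^2\,dz \geq \|Du\|_2^2 - \hat{\lambda}_k(2)\|u\|_2^2 \geq 0$ forces $\|Du\|_2^2 = \hat{\lambda}_k(2)\|u\|_2^2$, so by \eqref{eq3} $u$ realizes the infimum over $H_k$ and therefore $u\in E(\hat{\lambda}_k(2))$; it also forces $\int_\Omega(\hat{\lambda}_k(2)-\vartheta(z))u(z)^2\,dz = 0$. Since $\hat{\lambda}_k(2)-\vartheta\geq 0$ a.e., the integrand vanishes a.e., so $u$ vanishes on $\{z:\vartheta(z)<\hat{\lambda}_k(2)\}$, a set of positive Lebesgue measure because $\vartheta\not\equiv\hat{\lambda}_k(2)$. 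By the unique continuation property of $E(\hat{\lambda}_k(2))$, $u\equiv 0$, contradicting $\|u\|=1$.

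The main obstacle — and the only genuinely non-formal point — is securing the strong convergence $u_n\to u$ so that the limit is nonzero; this is where the Hilbert-space structure of $H^1_0(\Omega)$ (Kadec--Klee / the parallelogram identity) is used, via $u_n\rightharpoonup u$ together with $\|Du_n\|_2\to\|Du\|_2$. Everything else is bookkeeping with \eqref{eq2}, \eqref{eq3}, and the unique continuation property. For part (b), replace $H_k$ by the finite-dimensional space $V_k = \bigoplus_{i=1}^k E(\hat{\lambda}_i(2))$; on $V_k$ the first equality in \eqref{eq3} gives $\|Du\|_2^2\leq\hat{\lambda}_k(2)\|u\|_2^2$, so the quadratic form is $\leq 0$, and the contradiction argument is even easier because $V_k$ is finite dimensional (so the unit sphere is compact and weak and strong convergence coincide); the unique continuation property again kills the limiting eigenfunction, yielding the strict bound with some $\hat{\xi}_1>0$.
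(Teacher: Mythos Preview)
The paper does not supply its own proof of this proposition; it merely cites the result from Papageorgiou and Kyritsi \cite{21}, remarking that it follows ``from the variational characterizations in (\ref{eq2}) and (\ref{eq3}) and the unique continuation property.'' Your proposal fills in exactly this argument and is correct: the non-negativity from \eqref{eq3}, the contradiction sequence, the upgrade to strong convergence via $u_n\rightharpoonup u$ plus $\|Du_n\|_2\to\|Du\|_2$ in the Hilbert space $H^1_0(\Omega)$, and the final appeal to unique continuation are precisely the standard ingredients the paper alludes to.

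One minor slip worth correcting: your parenthetical ``$H_k\subseteq C^1_0(\overline\Omega)$'' is false --- the individual eigenspaces $E(\hat\lambda_i(2))$ lie in $C^1_0(\overline\Omega)$, but their $H^1_0$-closure $H_k$ does not --- though this does not affect the argument, since you correctly work with the $H^1_0$-norm throughout.
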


For $1<q<\infty$, let $A_q:W^{1,q}_{0}(\Omega)\rightarrow W^{-1,q'}(\Omega)$ be the nonlinear map defined by
$$\left\langle A_q(u),h\right\rangle=\int_{\Omega}|Du|^{q-2}(Du,Dh)_{\RR^N}dz\ \mbox{for all}\ u,h\in W^{1,q}_{0}(\Omega).$$

If $q=2$, then $A_2=A\in\mathcal{L}(H^{1}_{0}(\Omega),H^{-1}(\Omega))$.

By Papageorgiou and Kyritsi \cite[p. 314]{21}, we have the following result summarizing the basic properties of the map $A_q$.
\begin{prop}\label{prop6}
	The map $A_q:W^{1,q}_{0}(\Omega)\rightarrow W^{-1.q'}(\Omega)$ is bounded (that is, it maps bounded sets to bounded sets), demicontinuous, strictly monotone (hence maximal monotone, too) and of type $(S)_+$, that is, if $u_n\stackrel{w}{\rightarrow} u$ in $W^{1,q}_{0}(\Omega)$ and $\limsup\limits_{n\rightarrow\infty}\left\langle A_q(u_n),u_n-u\right\rangle\leq 0$, then $u_n\rightarrow u$ in $W^{1,q}_{0}(\Omega)$ as $n\rightarrow\infty$.
\end{prop}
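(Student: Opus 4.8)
The plan is to verify the four asserted properties in turn; each reduces to an elementary pointwise inequality for the vector field $\xi\mapsto|\xi|^{q-2}\xi$ on $\RR^N$ together with standard facts about Nemytskii-type operators. For \emph{boundedness}, given $u,h\in W^{1,q}_0(\Omega)$, H\"older's inequality with exponents $q'$ and $q$ gives
$$|\langle A_q(u),h\rangle|\le\int_\Omega|Du|^{q-1}|Dh|\,dz\le||Du||_q^{q-1}||Dh||_q=||u||^{q-1}||h||,$$
so $||A_q(u)||_*\le||u||^{q-1}$, which yields boundedness at once. For \emph{demicontinuity} I would in fact establish the stronger statement that $A_q$ is continuous: if $u_n\to u$ in $W^{1,q}_0(\Omega)$ then $Du_n\to Du$ in $L^q(\Omega;\RR^N)$, so along a subsequence $Du_n\to Du$ a.e.\ with $|Du_n|\le g$ for some $g\in L^q(\Omega)$; continuity of $\xi\mapsto|\xi|^{q-2}\xi$ and the domination $\big||Du_n|^{q-2}Du_n\big|\le g^{q-1}\in L^{q'}(\Omega)$ let us invoke the dominated convergence theorem to obtain $|Du_n|^{q-2}Du_n\to|Du|^{q-2}Du$ in $L^{q'}(\Omega;\RR^N)$, hence $A_q(u_n)\to A_q(u)$ in $W^{-1,q'}(\Omega)$; since the limit is independent of the subsequence, the whole sequence converges.

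For \emph{strict monotonicity} I would write
$$\langle A_q(u)-A_q(v),u-v\rangle=\int_\Omega\big(|Du|^{q-2}Du-|Dv|^{q-2}Dv,\,Du-Dv\big)_{\RR^N}\,dz$$
and use that the integrand is pointwise nonnegative, with equality forcing $Du=Dv$ a.e.; by the Poincar\'e inequality this gives $u=v$, so the map is strictly monotone. Being everywhere defined, monotone and continuous (demicontinuity already suffices), $A_q$ is then maximal monotone. For the $(S)_+$ property, suppose $u_n\stackrel{w}{\rightarrow}u$ in $W^{1,q}_0(\Omega)$ with $\limsup_n\langle A_q(u_n),u_n-u\rangle\le0$; since $u_n-u\stackrel{w}{\rightarrow}0$ we have $\langle A_q(u),u_n-u\rangle\to0$, so $\limsup_n\langle A_q(u_n)-A_q(u),u_n-u\rangle\le0$, and as this quantity is nonnegative by monotonicity it must tend to $0$. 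For $q\ge2$ the inequality $(|\xi|^{q-2}\xi-|\eta|^{q-2}\eta,\xi-\eta)_{\RR^N}\ge c_q|\xi-\eta|^q$ with $c_q>0$ then forces $||Du_n-Du||_q\to0$, i.e.\ $u_n\to u$ in $W^{1,q}_0(\Omega)$.

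The only step that genuinely requires care is the $(S)_+$ argument when $1<q<2$: there the clean $q\ge2$ lower bound is unavailable, and one must instead use the inequality $(|\xi|^{q-2}\xi-|\eta|^{q-2}\eta,\xi-\eta)_{\RR^N}\ge c_q|\xi-\eta|^2(|\xi|+|\eta|)^{q-2}$, combined with H\"older's inequality (exponents $2/q$ and $2/(2-q)$) and the boundedness of $\{||Du_n||_q\}$, to still deduce $Du_n\to Du$ in $L^q(\Omega;\RR^N)$. Everything else is routine; in fact the proposition as stated is recorded in Papageorgiou--Kyritsi \cite[p.~314]{21}, which we cite.
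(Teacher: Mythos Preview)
Your proof is correct and complete. The paper, however, does not prove this proposition at all: it is stated without proof, preceded only by the remark that it is a special case of a result recorded in Papageorgiou and Kyritsi \cite[p.~314]{21}. So there is nothing to compare against beyond the citation, which you yourself acknowledge at the end of your write-up. Your argument is the standard one and every step is sound, including the case split for the $(S)_+$ property according to whether $q\ge 2$ or $1<q<2$.
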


Let $f_0:\Omega\times\RR\rightarrow\RR$ be a Carath\'eodory function with subcritical growth in the $x\in\RR$ variable, that is,
$$|f_0(z,x)|\leq a_0(z)(1+|x|^{r-1})\ \mbox{for a.a.}\ z\in\Omega,\ \mbox{all}\ x\in\RR,$$
with $a_0\in L^{\infty}(\Omega)_+$ and $1<r<p^*=\left\{
\begin{array}{ll}
	\frac{Np}{N-p}&\mbox{if}\ p<N\\
	+\infty&\mbox{if}\ N\leq p.
\end{array}\right.$

We set $F_0(z,x)=\int^{x}_{0}f_0(z,s)ds$ and consider the $C^1$-functional $\varphi_0:W^{1,p}_{0}(\Omega)\rightarrow\RR$ defined by
$$\varphi_0(u)=\frac{1}{p}||Du||^{p}_{p}+\frac{1}{2}||Du||^{2}_{2}-\int_{\Omega}F_0(z,u(z))dz\ \mbox{for all}\ u\in W^{1,p}_{0}(\Omega).$$

The next result is a special case of a more general result of Aizicovici, Papageorgiou and Staicu \cite{2}.
\begin{prop}\label{prop7}
	Let $u_0\in W^{1,p}_{0}(\Omega)$ be a local $C^1(\overline{\Omega})$-minimizer of $\varphi_0$, that is, there exists $\rho_0>0$ such that
	$$\varphi_0(u_0)\leq\varphi_0(u_0+h)\ \mbox{for all}\ h\in C^{1}_{0}(\overline{\Omega}),\ ||h||_{C^{1}_{0}(\overline{\Omega})}<\rho_0.$$
	Then $u_0\in C^{1,\alpha}_{0}(\overline{\Omega})$ for some $\alpha\in(0,1)$ and it is also a local $W^{1,p}_{0}(\Omega)$-minimizer of $\varphi_0$, that is, there exists $\rho_1>0$ such that
	$$\varphi_0(u_0)\leq\varphi_0(u_0+h)\ \mbox{for all}\ h\in W^{1,p}_0(\Omega),\ ||h||\leq\rho_1.$$
\end{prop}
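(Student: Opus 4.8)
The strategy is the classical one of Brezis and Nirenberg, adapted to the nonhomogeneous operator $u\mapsto-\Delta_pu-\Delta u$. First I would establish the smoothness of $u_0$, and then argue by contradiction via constrained minimization on shrinking $W^{1,p}_0(\Omega)$-balls.

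\textbf{Step 1 (Regularity of $u_0$).} For every $h\in C^1_0(\overline\Omega)$ the map $t\mapsto\varphi_0(u_0+th)$ is differentiable and has a local minimum at $t=0$, so $\langle\varphi_0'(u_0),h\rangle=0$ for all $h\in C^1_0(\overline\Omega)$; by the density of $C^1_0(\overline\Omega)$ in $W^{1,p}_0(\Omega)$ and the continuity of $\varphi_0'$ this holds for all $h\in W^{1,p}_0(\Omega)$. Hence $u_0$ is a weak solution of $-\Delta_pu_0-\Delta u_0=f_0(z,u_0)$, $u_0|_{\partial\Omega}=0$. The subcritical growth of $f_0$ and a Moser-type iteration give $u_0\in L^\infty(\Omega)$, and then the nonlinear regularity theory for $(p,2)$-equations (Lieberman-type estimates) yields $u_0\in C^{1,\alpha}_0(\overline\Omega)$ for some $\alpha\in(0,1)$.

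\textbf{Step 2--3 (Constrained minimizers and the Lagrange multiplier).} Assume $u_0$ is not a local $W^{1,p}_0(\Omega)$-minimizer of $\varphi_0$. For $n\ge 1$ set $\overline B_n=\{u\in W^{1,p}_0(\Omega):\|u-u_0\|\le 1/n\}$. Since $u\mapsto\frac1p\|Du\|_p^p+\frac12\|Du\|_2^2$ is convex and continuous and $u\mapsto\int_\Omega F_0(z,u)\,dz$ is sequentially weakly continuous (the embedding $W^{1,p}_0(\Omega)\hookrightarrow L^r(\Omega)$ is compact for $r<p^*$), $\varphi_0$ is sequentially weakly lower semicontinuous; as $\overline B_n$ is bounded, closed and convex in the reflexive space $W^{1,p}_0(\Omega)$, it is weakly compact, so there is $y_n\in\overline B_n$ with $\varphi_0(y_n)=\inf_{\overline B_n}\varphi_0$. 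Our assumption gives $\varphi_0(y_n)<\varphi_0(u_0)$, hence $y_n\neq u_0$, while $\|y_n-u_0\|\le1/n$ forces $y_n\to u_0$ in $W^{1,p}_0(\Omega)$. If $y_n$ is interior to $\overline B_n$ then $\varphi_0'(y_n)=0$; otherwise $\|y_n-u_0\|=1/n$, and since the constraint functional $u\mapsto\frac1p\|D(u-u_0)\|_p^p$ is $C^1$ with derivative $A_p(u-u_0)$, which is nonzero at $y_n$ (injectivity of $A_p$, Proposition \ref{prop6}), the Lagrange multiplier rule together with the minimality of $y_n$ on the sphere $\{\frac1p\|D(u-u_0)\|_p^p=(1/n)^p/p\}$ produces $\mu_n\le 0$ with
$$A_p(y_n)+A(y_n)=\mu_nA_p(y_n-u_0)+N_{f_0}(y_n)\quad\text{in }W^{-1,p'}(\Omega),\qquad N_{f_0}(u)(\cdot)=f_0(\cdot,u(\cdot)).$$

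\textbf{Step 4 (Uniform $L^\infty$-bound -- the main obstacle).} Test the last identity with the truncations $(y_n-u_0-k)^+$ and $-(u_0-y_n-k)^+$, $k\ge0$. The point is that $\mu_n\le 0$ and $\langle A_p(y_n-u_0),(y_n-u_0-k)^+\rangle=\int_{\{y_n-u_0>k\}}|D(y_n-u_0)|^p\ge 0$ (similarly for the negative part), so the multiplier term enters with a favorable sign and may be discarded; what remains, after using $u_0\in C^1_0(\overline\Omega)$, Young's inequality and the subcritical growth of $f_0$, are the standard Ladyzhenskaya--Uraltseva--De~Giorgi inequalities on the super-level sets of $\pm(y_n-u_0)$. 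Since $\{y_n\}$ is bounded in $L^{p^*}(\Omega)$, the De~Giorgi iteration yields $M>0$ with $\|y_n-u_0\|_\infty\le M$, hence $\|y_n\|_\infty\le M'$, for all $n$. This is the heart of the argument: the sign of the multiplier is indispensable here, and controlling the $(p,2)$-structure in the iteration (together with the ensuing regularity in Step 5, where one must track the dependence of the estimates on the possibly unbounded $\mu_n$, via the a priori bound $|\mu_n|\le Cn^{p-1}$ obtained by testing with $y_n-u_0$) is precisely what is done in full generality in \cite{2}.

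\textbf{Step 5--6 (Uniform $C^{1,\beta}$-bound and conclusion).} With $\|y_n\|_\infty\le M'$, the data in the equation for $y_n$ are controlled uniformly, and Lieberman's global regularity theory for $(p,2)$-type equations gives $\|y_n\|_{C^{1,\beta}_0(\overline\Omega)}\le M_1$ for some $\beta\in(0,1)$ and all $n$. As the embedding $C^{1,\beta}_0(\overline\Omega)\hookrightarrow C^1_0(\overline\Omega)$ is compact, a subsequence of $\{y_n\}$ converges in $C^1_0(\overline\Omega)$, necessarily to $u_0$ by the $W^{1,p}_0(\Omega)$-convergence of Step 2. Then for $n$ large $\|y_n-u_0\|_{C^1_0(\overline\Omega)}<\rho_0$, while $\varphi_0(y_n)<\varphi_0(u_0)$, contradicting that $u_0$ is a local $C^1_0(\overline\Omega)$-minimizer. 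Hence $u_0$ is a local $W^{1,p}_0(\Omega)$-minimizer of $\varphi_0$, and $u_0\in C^{1,\alpha}_0(\overline\Omega)$ by Step 1.
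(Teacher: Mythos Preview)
The paper does not supply a proof of this proposition; it is stated as ``a special case of a more general result of Aizicovici, Papageorgiou and Staicu \cite{2}'' and the proof is delegated entirely to that reference. Your sketch is precisely the Brezis--Nirenberg scheme that \cite{2} implements for nonhomogeneous operators, so in spirit you are reproducing the cited argument rather than offering an alternative route.

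Your outline is sound through Step~3, and you correctly flag the genuine difficulty: the uniform $C^{1,\beta}$-estimate in Step~5. The bound $|\mu_n|\le Cn^{p-1}$ that you extract is not by itself enough to feed into Lieberman's theorem, since the structural constants in the divergence-form operator $\xi\mapsto |\xi|^{p-2}\xi+\xi+(-\mu_n)|\xi-Du_0|^{p-2}(\xi-Du_0)$ blow up with $n$. What \cite{2} actually does is exploit the sign $\mu_n\le 0$ not only in the $L^\infty$-iteration but also in the oscillation/H\"older step, verifying that the combined vector field still satisfies Lieberman's hypotheses with constants independent of $-\mu_n\ge 0$ once $\|y_n\|_\infty$ is controlled; this is the point that needs the full argument of \cite{2} rather than a one-line appeal to regularity. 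Since you explicitly defer to \cite{2} at that juncture, your proposal is acceptable as a proof sketch, and it matches what the paper intends by citing \cite{2}.
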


We also recall some basic definitions and facts from Morse theory. So, let $\varphi\in C^1(X)$ and $c\in\RR$. We introduce the following sets.
$$\varphi^c=\{u\in X:\varphi(u)\leq c\},\ K_{\varphi}=\{u\in X:\varphi'(u)=0\}\ \mbox{and}\ K^{c}_{\varphi}=\{u\in K_{\varphi}:\varphi(u)=c\}.$$

Let $(Y_1,Y_2)$ be a topological pair with $Y_2\subseteq Y_1\subseteq X$. For every integer $k\geq 0$, by $H_k(Y_1,Y_2)$ we denote the $k$-th relative singular homology group with integer coefficients. The critical groups of $\varphi$ at $u\in K^{c}_{\varphi}$ which is isolated among the critical points, are defined by
$$C_k(\varphi,u)=H_k(\varphi^c\cap U,\ \varphi^c\cap U\backslash\{u\})\ \mbox{for all}\ k\geq 0.$$

Here $U$ is a neighborhood of $u$ such that $K_{\varphi}\cap \varphi^c\cap U=\{u\}$. The excision property of the singular homology implies that this definition is independent of the particular choice of the neighborhood $U$.

Suppose that $\varphi\in C^1(X)$ satisfies the $C$-condition and $\inf\ \varphi(K_{\varphi})>-\infty$. Let $c<\inf \varphi(K_{\varphi})$. Then the critical groups of $\varphi$ at infinity are defined by
$$C_k(\varphi,\infty)=H_k(X,\varphi^c)\ \mbox{for all}\ k\geq 0.$$

The second deformation theorem (see, for example, Gasinski and Papageorgiou \cite[p. 628]{13}), implies that this definition is independent of the choice of the level $c<\inf \varphi(K_{\varphi})$.

We introduce
\begin{eqnarray*}
	&&M(t,u)=\sum\limits_{k\geq 0}\mbox{rank}\, C_k(\varphi,u)t^k\ \mbox{for all}\ t\in\RR,\ \mbox{all}\ u\in K_{\varphi}\ \mbox{and}\\
	&&P(t,\infty)=\sum\limits_{k\geq 0}\mbox{rank}\, C_k(\varphi,\infty)t^k\ \mbox{for all}\ t\in\RR.
\end{eqnarray*}

The Morse relation says that
\begin{equation}\label{eq4}
	\sum\limits_{u\in K_{\varphi}}M(t,u)=P(t,\infty)+(1+t)Q(t)
\end{equation}
where $Q(t)=\sum\limits_{k\geq 0}\beta_kt^k$ is a formal series in $t\in\RR$ with nonnegative coefficients.

Finally, let us fix our notation in this paper. By $|\cdot|_N$ we denote the Lebesgue measure on $\RR^N$. Given $x\in\RR$, we let $x^{\pm}=\max\{\pm x,0\}$. Then for $u\in W^{1,p}_{0}(\Omega)$ we define $u^{\pm}(\cdot)=u(\cdot)^{\pm}$. We know that
$$u^{\pm}\in W^{1,p}_{0}(\Omega),\ u=u^+-u^-,\ |u|=u^++u^-.$$

Given a measurable function $g(z,x)$ (for example, a Carath\'eodory function), we set
$$N_g(u)(\cdot)=g(\cdot,u(\cdot))\ \mbox{for all}\ u\in W^{1,p}_{0}(\Omega)$$
(the Nemytski map corresponding to $g$). Evidently, $z\mapsto N_g(u)(z)$ is measurable.

\section{Near Resonance from the left of $\hat{\lambda}_1(p)>0$}\label{sec3}

In this section we deal with problem \eqref{eqP} in which the parameter is close to $\hat{\lambda}_1(p)>0$ from the left (near resonance from the left). We introduce the following conditions on the perturbation $f(z,x)$:

$H_1:$ $f:\Omega\times\RR\rightarrow\RR$ is a Carath\'eodory function such that $f(z,0)=0$ for a.a. $z\in\Omega$ and
\begin{itemize}
	\item[(i)] for every $\rho>0$, there exists $a_{\rho}\in L^{\infty}(\Omega)_+$ such that
	$$|f(z,x)|\leq a_{\rho}(z)\ \mbox{for a.a.}\ z\in\Omega,\ \mbox{all}\ |x|\leq\rho;$$
	\item[(ii)] $\lim\limits_{x\rightarrow\pm\infty}\ \frac{f(z,x)}{|x|^{p-2}x}=0$ uniformly for a.a. $z\in\Omega$ and if $F(z,x)=\int^{x}_{0}f(z,s)ds$, then
	$$\lim\limits_{x\rightarrow\pm\infty}\ \frac{F(z,x)}{x^2}=+\infty\ \mbox{uniformly for a.a.}\ z\in\Omega;\ \mbox{and}$$
	\item[(iii)] there exist an integer $m\geq 2$ and a function $\eta\in L^{\infty}(\Omega)$ such that
	\begin{eqnarray*}
		&&\eta(z)\in[\hat{\lambda}_m(2),\hat{\lambda}_{m+1}(2)]\ \mbox{for a.a.}\ z\in\Omega,\ \eta\not\equiv\hat{\lambda}_m(2),\ \eta\not\equiv\hat{\lambda}_{m+1}(2)\\
		&&\lim\limits_{x\rightarrow 0}\frac{f(z,x)}{x}=\eta(z)\ \mbox{uniformly for a.a.}\ z\in\Omega.
	\end{eqnarray*}
\end{itemize}
\begin{remark}
	Evidently, $f(z,\cdot)$ is differentiable at $x=0$ and $f'_x(z,0)=\eta(z)$. Hypothesis $H_1$ imply that there exists $c_1>0$ such that $F(z,x)\geq -c_1x^2$ for a.a. $z\in\Omega$, all $x\in\RR$.
\end{remark}
	
	For $\lambda>0$, let $\varphi_{\lambda}:W^{1,p}_{0}(\Omega)\rightarrow\RR$ be the energy functional for problem \eqref{eqP}, defined by
	 $$\varphi_{\lambda}(u)=\frac{1}{p}||Du||^{p}_{p}+\frac{1}{2}||Du||^{2}_{2}-\frac{\lambda}{p}||u||^{p}_{p}-\int_{\Omega}F(z,u(z))dz\ \mbox{for all}\ u\in W^{1,p}_{0}(\Omega).$$
	
	Evidently, $\varphi_{\lambda}\in C^1(W^{1,p}_{0}(\Omega))$.

\begin{prop}\label{prop8}
	If hypotheses $H_1(i),(ii)$ hold and $\lambda\in(0,\hat{\lambda}_1(p))$, then the functional $\varphi_{\lambda}$ is coercive.
\end{prop}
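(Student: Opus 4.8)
The plan is to argue by contradiction. Suppose $\varphi_\lambda$ is not coercive. Then we can find a sequence $\{u_n\}_{n\geq 1}\subseteq W^{1,p}_0(\Omega)$ with $\|u_n\|\to\infty$ but $\varphi_\lambda(u_n)\leq M$ for some $M>0$ and all $n$. Set $y_n=u_n/\|u_n\|$, so $\|y_n\|=1$ and we may assume (passing to a subsequence) that $y_n\xrightarrow{w}y$ in $W^{1,p}_0(\Omega)$ and $y_n\to y$ in $L^p(\Omega)$ (and pointwise a.e., dominated by an $L^p$ function). Dividing the inequality $\varphi_\lambda(u_n)\leq M$ by $\|u_n\|^p$ gives
\begin{equation*}
\frac{1}{p}\|Dy_n\|_p^p+\frac{1}{2\|u_n\|^{p-2}}\|Dy_n\|_2^2-\frac{\lambda}{p}\|y_n\|_p^p-\frac{1}{\|u_n\|^p}\int_\Omega F(z,u_n)\,dz\leq\frac{M}{\|u_n\|^p}.
\end{equation*}
Since $p>2$, the second term vanishes as $n\to\infty$. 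For the integral term, hypothesis $H_1(ii)$ (the first limit, together with $H_1(i)$ for bounded $x$) yields a bound $|F(z,x)|\leq \varepsilon|x|^p + c_\varepsilon$ for a.a. $z$, all $x$, any $\varepsilon>0$; hence $\frac{1}{\|u_n\|^p}\int_\Omega F(z,u_n)\,dz$ stays bounded and, more precisely, $\limsup_n \frac{1}{\|u_n\|^p}\int_\Omega|F(z,u_n)|\,dz\leq \varepsilon\|y\|_p^p$ for every $\varepsilon>0$, so this term tends to $0$.

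Passing to the limit (using weak lower semicontinuity of $u\mapsto\|Du\|_p^p$ and the strong $L^p$-convergence $y_n\to y$) we obtain
\begin{equation*}
\frac{1}{p}\|Dy\|_p^p-\frac{\lambda}{p}\|y\|_p^p\leq 0,\quad\text{i.e.}\quad \|Dy\|_p^p\leq\lambda\|y\|_p^p.
\end{equation*}
Since $\lambda<\hat\lambda_1(p)$, the variational characterization \eqref{eq1} forces $y=0$. But then $\|Dy_n\|_p^p\to 0$, and combined with the (now vanishing) other terms this forces $\frac{1}{p}\|Dy_n\|_p^p\to 0$; since $A_p$ is of type $(S)_+$ (Proposition \ref{prop6}), or more directly since $\|y_n\|=\|Dy_n\|_p\to 0$ contradicts $\|y_n\|=1$, we reach a contradiction. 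Thus $\varphi_\lambda$ is coercive.

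I expect the main obstacle to be the careful treatment of the integral term $\frac{1}{\|u_n\|^p}\int_\Omega F(z,u_n)\,dz$: one must extract from $H_1$ a genuinely uniform estimate of the form $F(z,x)\geq -\varepsilon|x|^p-c_\varepsilon$ (or two-sided control) so that, after dividing by $\|u_n\|^p$ and using $y_n\to y$ in $L^p$, the term is controlled by $\varepsilon\|y\|_p^p$ with $\varepsilon$ arbitrary; this is what makes the limiting inequality sharp enough to invoke \eqref{eq1}. Note that the superquadraticity condition in $H_1(ii)$ ($F(z,x)/x^2\to+\infty$) is \emph{not} needed here — only the sublinear-at-infinity behavior relative to $|x|^{p-2}x$ — and the gap $\lambda<\hat\lambda_1(p)$ is exactly what gives the strict sign needed to conclude $y=0$.
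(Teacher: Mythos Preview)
Your proof is correct, but it takes a considerably more roundabout route than the paper's. The paper proceeds \emph{directly}: from $H_1(i),(ii)$ one extracts, for any $\epsilon>0$, the upper bound $F(z,x)\leq\frac{\epsilon}{p}|x|^p+c_2$ for a.a.\ $z\in\Omega$ and all $x\in\RR$, and then simply estimates
\[
\varphi_\lambda(u)\geq\frac{1}{p}\Bigl[1-\frac{\lambda+\epsilon}{\hat{\lambda}_1(p)}\Bigr]\|u\|^p-c_2|\Omega|_N,
\]
using \eqref{eq1} to absorb both the $\lambda\|u\|_p^p$ and $\epsilon\|u\|_p^p$ terms into the gradient term. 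Choosing $\epsilon\in(0,\hat{\lambda}_1(p)-\lambda)$ makes the bracket positive and coercivity is immediate. Your contradiction argument via the normalized sequence $y_n=u_n/\|u_n\|$, weak limits, and the $(S)_+$-type conclusion is a standard and perfectly valid template (and indeed the paper uses exactly this machinery in later propositions, e.g.\ Proposition~\ref{prop17}), but here it is heavier than necessary: the very same estimate on $F$ that you use to show the integral term vanishes after division by $\|u_n\|^p$ already suffices to bound $\varphi_\lambda$ from below by an explicit coercive function, without ever passing to a subsequence. Your closing remark that only the sublinearity of $f$ relative to $|x|^{p-2}x$ is used (not the superquadraticity of $F$) is correct and matches the paper.
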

\begin{proof}
	By virtue of hypotheses $H_1(i),(ii)$, given $\epsilon>0$, we can find $c_2=c_2(\epsilon)>0$ such that
	\begin{equation}\label{eq5}
		F(z,x)\leq\frac{\epsilon}{p}|x|^p+c_2\ \mbox{for a.a.}\ z\in\Omega,\ \mbox{all}\ x\in\RR.
	\end{equation}
	
	Then for all $u\in W^{1,p}_{0}(\Omega)$, we have
	\begin{eqnarray*}
		 \varphi_{\lambda}(u)&=&\frac{1}{p}||Du||^{p}_{p}+\frac{1}{2}||Du||^{2}_{2}-\frac{\lambda}{p}||u||^{p}_{p}-\int_{\Omega}F(z,u(z))dz\\
		&\geq&\frac{1}{p}\left[1-\frac{\lambda+\epsilon}{\hat{\lambda}_1(p)}\right]\ ||u||^p-c_2|\Omega|_N\ (\mbox{see (\ref{eq1}) and (\ref{eq4})}).
	\end{eqnarray*}
	
	Choosing $\epsilon\in(0,\hat{\lambda}_1(p)-\lambda)$ (recall that $\lambda<\hat{\lambda}_1(p)$), we can conclude from the last inequality that $\varphi_{\lambda}$ is coercive.
\end{proof}

Let $V=\{u\in W^{1,p}_{0}(\Omega):\int_{\Omega}u\ \hat{u}_1(p)^{p-1}dz=0\}$ (recall $\hat{u}_1(p)\in \mbox{int}\, C_+$). We have
$$W^{1,p}_{0}(\Omega)=\RR\hat{u}_1(p)\oplus V.$$

We introduce the following quantity
$$\hat{\lambda}_V(p)=\inf\left[\frac{||Du||^{p}_{p}}{||u||^{p}_{p}}:u\in V,\ u\neq 0\right].$$
\begin{lemma}\label{lem9}
	$\hat{\lambda}_1(p)<\hat{\lambda}_V(p)\leq \hat{\lambda}_2(p)$.
\end{lemma}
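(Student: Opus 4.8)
The plan is to establish the two inequalities separately. For the upper bound $\hat\lambda_V(p)\le\hat\lambda_2(p)$, I would exploit the minimax characterization of $\hat\lambda_2(p)$ from Proposition \ref{prop4}. Take a path $\gamma_0\in\Gamma_0$ joining $-\hat u_1(p)$ to $\hat u_1(p)$ inside $M=W^{1,p}_0(\Omega)\cap\partial B^{L^p}_1$ which is near-optimal, i.e.\ $\max_{t}\|D\gamma_0(t)\|_p^p\le\hat\lambda_2(p)+\varepsilon$. Along such a path the quantity $\int_\Omega \gamma_0(t)\,\hat u_1(p)^{p-1}dz$ is a continuous real-valued function of $t$ that takes the value $+\int_\Omega\hat u_1(p)^p\,dz>0$ at $t=1$ and $-\int_\Omega\hat u_1(p)^p\,dz<0$ at $t=-1$; hence by the intermediate value theorem there is some $t_0$ with $\gamma_0(t_0)\in V$. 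Since $\gamma_0(t_0)$ also lies on $\partial B^{L^p}_1$, it is a legitimate test element in the definition of $\hat\lambda_V(p)$, giving $\hat\lambda_V(p)\le\|D\gamma_0(t_0)\|_p^p\le\hat\lambda_2(p)+\varepsilon$; letting $\varepsilon\downarrow 0$ yields the claim.

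For the strict lower bound $\hat\lambda_1(p)<\hat\lambda_V(p)$, first note $\hat\lambda_1(p)\le\hat\lambda_V(p)$ is immediate from \eqref{eq1} since $V\subseteq W^{1,p}_0(\Omega)$. To get strictness, I would argue by contradiction: suppose $\hat\lambda_V(p)=\hat\lambda_1(p)$. Take a minimizing sequence $\{u_n\}\subseteq V$ with $\|u_n\|_p=1$ and $\|Du_n\|_p^p\to\hat\lambda_1(p)$. This sequence is bounded in $W^{1,p}_0(\Omega)$, so along a subsequence $u_n\stackrel{w}{\to}u$ in $W^{1,p}_0(\Omega)$ and $u_n\to u$ in $L^p(\Omega)$; hence $\|u\|_p=1$ (so $u\ne0$) and, since $V$ is weakly closed (it is a closed hyperplane), $u\in V$. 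By weak lower semicontinuity of $\|D\cdot\|_p$, $\|Du\|_p^p\le\liminf\|Du_n\|_p^p=\hat\lambda_1(p)=\hat\lambda_1(p)\|u\|_p^p$, so by \eqref{eq1} equality holds and $u$ realizes the infimum in \eqref{eq1}. But \eqref{eq1} states the infimum is realized only on the one-dimensional eigenspace $\RR\hat u_1(p)$, and the elements there do not change sign (indeed $\hat u_1(p)\in\operatorname{int}C_+$). Thus $u=\xi\hat u_1(p)$ for some $\xi\ne0$, which forces $\int_\Omega u\,\hat u_1(p)^{p-1}dz=\xi\int_\Omega\hat u_1(p)^p\,dz\ne0$, contradicting $u\in V$. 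Therefore $\hat\lambda_1(p)<\hat\lambda_V(p)$.

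The main obstacle, modest as it is, lies in the lower bound: one must be careful that the infimum $\hat\lambda_V(p)$ is genuinely attained (or rather that a minimizing sequence converges strongly enough in $L^p$ to conclude $u\ne0$) and then invoke the simplicity of $\hat\lambda_1(p)$ together with the constant-sign property of its eigenfunctions to derive the contradiction with the orthogonality-type condition defining $V$. The compact embedding $W^{1,p}_0(\Omega)\hookrightarrow L^p(\Omega)$ is what makes this work; everything else is routine. The upper bound is essentially a topological (connectedness) argument and presents no real difficulty.
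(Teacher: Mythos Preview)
Your proof is correct and follows essentially the same approach as the paper's. Both halves match: the lower bound is handled identically (minimizing sequence, compact embedding into $L^p$, weak lower semicontinuity, simplicity of $\hat\lambda_1(p)$ to force $u=\pm\hat u_1(p)$, contradiction with $u\in V$), and the upper bound uses the same intermediate-value argument on $t\mapsto\int_\Omega\gamma_0(t)\hat u_1(p)^{p-1}dz$ along a near-optimal path from Proposition~\ref{prop4}. The only cosmetic difference is that the paper phrases the upper bound by contradiction (assume $\hat\lambda_2(p)<\hat\lambda_V(p)$ and pick a path with $\max_t\|D\gamma_0(t)\|_p^p<\hat\lambda_V(p)$) whereas you argue directly with an $\varepsilon$; the content is the same.
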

\begin{proof}
	Clearly, $\hat{\lambda}_1(p)\leq\hat{\lambda}_V(p)$ (see (\ref{eq1})). Suppose that $\hat{\lambda}_1(p)=\hat{\lambda}_V(p)$. Then we can find $\{u_n\}_{n\geq 1}\subseteq V$ such that
	$$||u_n||_p=1\ \mbox{and}\ ||Du||^{p}_{p}\rightarrow \hat{\lambda}_V(p)=\hat{\lambda}_1(p).$$
	
	By passing to a suitable subsequence if necessary, we may assume that
	$$u_n\stackrel{w}{\rightarrow} u\ \mbox{in}\ W^{1,p}_{0}(\Omega)\ \mbox{and}\ u_n\rightarrow u\ \mbox{in}\ L^p(\Omega)\ \mbox{as}\ n\rightarrow\infty.$$
	
	We have $u\in V$ and $||u||_p=1$. Also,
	\begin{eqnarray*}	 &&\hat{\lambda}_1(p)\leq||Du||^{p}_{p}\leq\liminf\limits_{n\rightarrow\infty}||Du_n||^{p}_{p}=
\hat{\lambda}_V(p)=\hat{\lambda}_1(p),\\
		&\Rightarrow&\hat{\lambda}_1(p)=||Du||^{p}_{p},\ \mbox{hence}\ u=\pm\hat{u}_1(p)\ (\mbox{recall}\ ||u||_p=1).
	\end{eqnarray*}
	
	But then $u\notin V$, a contradiction. So, we have proved that
	$$\hat{\lambda}_1(p)<\hat{\lambda}_V(p).$$
	
	Next, suppose that $\hat{\lambda}_2(p)<\hat{\lambda}_V(p)$. By virtue of Proposition \ref{prop4}, we can find $\hat{\gamma}_0=\Gamma_0$ such that
	\begin{equation}\label{eq6}
		||D\hat{\gamma}_0(t)||^{p}_{p}<\hat{\lambda}_V(p)\ \mbox{for all}\ t\in[0,1].
	\end{equation}
	
	We have $\hat{\gamma}_0(-1)=-\hat{u}_1(p)$, $\hat{\gamma}_0(1)=\hat{u}_1(p)$. Consider the function $[-1,1]\ni t\rightarrow\sigma(t)=\int_{\Omega}\hat{\gamma}_0(t)\hat{u}_1(p)^{p-1}dz$. Evidently, this function is continuous and $\sigma(-1)=-||\hat{u}_1(p)||^{p}_{p}<0<||\hat{u}_1(p)||^{p}_{p}=\sigma(1)$. So, by Bolzano's theorem, we can find $t_0\in(0,1)$ such that
	\begin{eqnarray*}
		&&\sigma(t_0)=\int_{\Omega}\hat{\gamma}_0(t_0)\hat{u}_1(p)^{p-1}dz=0\\
		&\Rightarrow&\hat{\gamma}_0(t_0)\in V,\ \mbox{which contradicts (\ref{eq6})}.
	\end{eqnarray*}
	
	Therefore we infer that $\hat{\lambda}_V(p)\leq \hat{\lambda}_2(p)$.
\end{proof}

\begin{prop}\label{prop10}
	If hypotheses $H_1(i),(ii)$ hold and $\lambda=\hat{\lambda}_1(p)$, then $\left.\varphi_{\lambda}\right|_V$ is bounded from below.
\end{prop}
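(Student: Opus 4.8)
The plan is to estimate $\varphi_{\lambda}$ from below on $V$ when $\lambda=\hat\lambda_1(p)$, using the strict gap $\hat\lambda_1(p)<\hat\lambda_V(p)$ from Lemma~\ref{lem9} to absorb the $p$-Laplacian term, the nonnegativity of $\frac12\|Du\|_2^2$, and the quadratic lower bound on $F$ from the Remark after $H_1$. First I would recall that $H_1(i),(ii)$ yield, for any $\epsilon>0$, a constant $c_2=c_2(\epsilon)>0$ with $F(z,x)\le \frac{\epsilon}{p}|x|^p+c_2$ for a.a.\ $z\in\Omega$ and all $x\in\RR$ (this is exactly \eqref{eq5}). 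Then for $u\in V$,
\begin{eqnarray*}
\varphi_{\lambda}(u)&=&\frac1p\|Du\|_p^p+\frac12\|Du\|_2^2-\frac{\hat\lambda_1(p)}{p}\|u\|_p^p-\int_{\Omega}F(z,u)\,dz\\
&\ge&\frac1p\|Du\|_p^p-\frac{\hat\lambda_1(p)+\epsilon}{p}\|u\|_p^p-c_2|\Omega|_N,
\end{eqnarray*}
where I dropped the nonnegative term $\frac12\|Du\|_2^2$. By the definition of $\hat\lambda_V(p)$ we have $\|u\|_p^p\le \hat\lambda_V(p)^{-1}\|Du\|_p^p$ for all $u\in V$, so
$$\varphi_{\lambda}(u)\ge \frac1p\Bigl[1-\frac{\hat\lambda_1(p)+\epsilon}{\hat\lambda_V(p)}\Bigr]\|Du\|_p^p-c_2|\Omega|_N.$$
Since $\hat\lambda_1(p)<\hat\lambda_V(p)$ by Lemma~\ref{lem9}, choosing $\epsilon\in(0,\hat\lambda_V(p)-\hat\lambda_1(p))$ makes the bracket strictly positive, whence $\varphi_{\lambda}|_V\ge -c_2|\Omega|_N$, i.e.\ it is bounded below.

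**The main obstacle.** There is no serious obstacle: the only point requiring care is the choice of $\epsilon$, which must be tied to the \emph{strict} inequality in Lemma~\ref{lem9} rather than to $\hat\lambda_1(p)$ itself (as in Proposition~\ref{prop8}), since here $\lambda=\hat\lambda_1(p)$ sits exactly at the principal eigenvalue and coercivity on all of $W^{1,p}_0(\Omega)$ fails. Restricting to $V$ is precisely what buys the gap. One should also note that the quadratic lower bound $F(z,x)\ge -c_1 x^2$ from the Remark is not even needed for mere boundedness from below (the one-sided bound \eqref{eq5} suffices); I would mention it only if a sharper statement were wanted. The argument gives boundedness below but not coercivity of $\varphi_{\lambda}|_V$, which is consistent with the near-resonance phenomenon and presumably is all that is used in the sequel.
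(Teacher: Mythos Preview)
Your proof is correct and follows essentially the same route as the paper: use the growth bound \eqref{eq5} from $H_1(i),(ii)$, drop the nonnegative $\tfrac12\|Du\|_2^2$ term, invoke the definition of $\hat\lambda_V(p)$ on $V$, and then choose $\epsilon\in(0,\hat\lambda_V(p)-\hat\lambda_1(p))$ via Lemma~\ref{lem9}. The only cosmetic difference is that the paper writes the final lower bound in terms of $\|v\|_p^p$ rather than $\|Dv\|_p^p$; incidentally, your version actually yields coercivity of $\varphi_\lambda|_V$ (since $\|Du\|_p=\|u\|$), so your closing remark that coercivity fails is too pessimistic.
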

\begin{proof}
	Let $v\in V$. We have
	\begin{eqnarray}\label{eq7}		 \varphi_{\lambda}(v)&=&\frac{1}{p}||Dv||^{p}_{p}+\frac{1}{2}||Dv||^{2}_{2}-\frac{\hat{\lambda}_1(p)}{p}||v||^{p}_{p}-\int_{\Omega}F(z,v)dz\nonumber\\
		&\geq&\frac{\hat{\lambda}_V(p)-\hat{\lambda}_1(p)-\epsilon}{2}\ ||v||^{p}_{p}-c_2|\Omega|_N\ (\mbox{see (\ref{eq4})}).
	\end{eqnarray}
	
	From Lemma \ref{lem9} we know that $\hat{\lambda}_1(p)<\hat{\lambda}_V(p)$. So, we choose $\epsilon\in(0,\hat{\lambda}_v(p)-\hat{\lambda}_1(p))$. Then from (\ref{eq7}) we infer that $\left.\varphi_{\lambda}\right|_V$ with $\lambda=\hat{\lambda}_1(p)$, is bounded from below.
\end{proof}

Let $m_1=\inf\limits_V\varphi_{\hat{\lambda}_1(p)}>-\infty$ (see Proposition \ref{prop10}). Note that, if $\lambda\in(0,\hat{\lambda}_1(p))$ then
\begin{eqnarray}\label{eq8}
	&&\varphi_{\hat{\lambda}_1(p)}\leq \varphi_{\lambda},\nonumber\\
	&\Rightarrow&m_1\leq\inf\limits_V\varphi_{\lambda}\ \mbox{for all}\ \lambda\in(0,\hat{\lambda}_1(p)).
\end{eqnarray}
\begin{prop}\label{prop11}
	If hypothesis $H_1$ holds, then we can find small $\epsilon>0$ such that every $\lambda\in(\hat{\lambda}_1(p)-\epsilon, \hat{\lambda}_1(p))$ we can find large $t_0>0$  such that
	$$\varphi_{\lambda}(\pm t_0\hat{u}_1(p))<m_1.$$
\end{prop}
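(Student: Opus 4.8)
The plan is to evaluate $\varphi_\lambda$ along the one-dimensional ray $\RR\hat u_1(p)$ and to play off the superquadratic growth of $F$ at infinity against the smallness of $\hat\lambda_1(p)-\lambda$. First I would extract from $H_1(i),(ii)$ the elementary lower bound: for every $M>0$ there exists $c_M>0$ such that $F(z,x)\geq Mx^2-c_M$ for a.a. $z\in\Omega$ and all $x\in\RR$. Indeed, by the limit $F(z,x)/x^2\to+\infty$ in $H_1(ii)$ there is $\delta=\delta(M)>0$ with $F(z,x)\geq Mx^2$ whenever $|x|\geq\delta$, while for $|x|\leq\delta$ hypothesis $H_1(i)$ gives $|F(z,x)|\leq\delta||a_\delta||_{\infty}$, so the bound holds with $c_M=\delta||a_\delta||_{\infty}+M\delta^2$.

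Next, recalling that $||\hat u_1(p)||_p=1$ and that the infimum in (\ref{eq1}) is attained at $\hat u_1(p)$, so that $||D\hat u_1(p)||_p^p=\hat\lambda_1(p)$, a direct computation gives, for every $t>0$,
$$\varphi_\lambda(\pm t\hat u_1(p))=\frac{t^p}{p}\big(\hat\lambda_1(p)-\lambda\big)+\frac{t^2}{2}||D\hat u_1(p)||_2^2-\int_\Omega F(z,\pm t\hat u_1(p))\,dz$$
(the $p$-homogeneous terms and the $H^1$-seminorm term are even in $t$). Inserting the lower bound for $F$, with $(\pm t\hat u_1(p)(z))^2=t^2\hat u_1(p)(z)^2$, yields
$$\varphi_\lambda(\pm t\hat u_1(p))\leq\frac{t^p}{p}\big(\hat\lambda_1(p)-\lambda\big)+\Big(\frac12||D\hat u_1(p)||_2^2-M||\hat u_1(p)||_2^2\Big)t^2+c_M|\Omega|_N.$$

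Now I would fix the parameters in the right order. Choose $M>0$ so large that $M||\hat u_1(p)||_2^2-\frac12||D\hat u_1(p)||_2^2\geq1$; then the estimate becomes $\varphi_\lambda(\pm t\hat u_1(p))\leq\frac{t^p}{p}(\hat\lambda_1(p)-\lambda)-t^2+c_M|\Omega|_N$. Since $M$, $c_M$, $|\Omega|_N$ and $m_1$ are now fixed, pick $t_0>0$ so large that $-t_0^2+c_M|\Omega|_N<m_1-1$, and then pick $\epsilon\in(0,\hat\lambda_1(p))$ so small that $\frac{t_0^p}{p}\,\epsilon<1$. For every $\lambda\in(\hat\lambda_1(p)-\epsilon,\hat\lambda_1(p))$ we then have $\frac{t_0^p}{p}(\hat\lambda_1(p)-\lambda)<\frac{t_0^p}{p}\epsilon<1$, hence $\varphi_\lambda(\pm t_0\hat u_1(p))<1+(m_1-1)=m_1$, which is the assertion. (Note that this $t_0$ can be chosen independently of $\lambda$, so slightly more than claimed is proved.)

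The one genuinely delicate point — and the step I expect to be the main obstacle — is the order of quantifiers, which is dictated by the competition between the term $\frac{t^p}{p}(\hat\lambda_1(p)-\lambda)$ of order $t^p$ (with $p>2$) and the stabilising term $-t^2$: for a fixed $\lambda<\hat\lambda_1(p)$ one cannot let $t\to+\infty$, since the $p$-term eventually dominates. The remedy is precisely to choose $t_0$ large first, using only the $-t^2+c_M|\Omega|_N$ part to drop below $m_1$, and only afterwards to shrink $\epsilon$ so that the $p$-term contributes less than $1$; this is the structural reason why the conclusion must be local in $\lambda$ near $\hat\lambda_1(p)$.
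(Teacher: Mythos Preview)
Your proof is correct and follows essentially the same strategy as the paper: evaluate $\varphi_\lambda$ on $\RR\hat u_1(p)$, use the superquadratic growth of $F$ to dominate the $||D\hat u_1(p)||_2^2$ term, fix a large $t_0$ first, and only then shrink $\epsilon$ so that the remaining $(\hat\lambda_1(p)-\lambda)t_0^p$ contribution is harmless. The only difference is cosmetic: the paper obtains the lower bound on $\int_\Omega F(z,t\hat u_1(p))\,dz$ by splitting $\Omega$ into $\{t\hat u_1(p)\geq M_1\}$ and its complement and using $|\{t\hat u_1(p)<M_1\}|_N\to 0$ (which needs $\hat u_1(p)\in\mathrm{int}\,C_+$), whereas you bypass this by deriving a global pointwise bound $F(z,x)\geq Mx^2-c_M$ directly from $H_1(i),(ii)$; your route is slightly cleaner and does not require the strict positivity of $\hat u_1(p)$ on $\Omega$.
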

\begin{proof}
	By virtue of hypothesis $H_1(ii)$, given $\xi>0$, we can find $M_1=M_1(\xi)>0$ such that
	\begin{equation}\label{eq9}
		F(z,x)\geq\xi x^2\ \mbox{for a.a.}\ z\in\Omega,\ \mbox{all}\ |x|\geq M_1.
	\end{equation}
	
	Let $t>0$. We have
	\begin{eqnarray}\label{eq10}
		\int_{\Omega}F(z,t\hat{u}_1(p))dz&=&\int_{\{t\hat{u}_1(p)\geq M_1\}}F(z,t\hat{u}_1(p))dz+\int_{\{0\leq t\hat{u}_1(p)<M_1\}}F(z,t\hat{u}_1(p))dz\nonumber\\
		&\geq&\xi t^2\int_{\{t\hat{u}_1(p)\geq M_1\}}\hat{u}_1(p)^2dz+\int_{\{0\leq t\hat{u}_1(p)<M_1\}}F(z,t\hat{u}_1(p))dz\ \mbox{(see (\ref{eq9}))}\nonumber\\
		&\geq&\xi t^2||\hat{u}_1(p)||^{2}_{2}-(\xi+c_1)t^2|\{0\leq t\hat{u}_1(p)<M_1\}|_N.
	\end{eqnarray}
	
	Note that $|\{0\leq t\hat{u}_1(p)<M_1\}|_N\rightarrow 0$ as $t\rightarrow\infty$ (recall that $\hat{u}_1(p)\in \mbox{int}\, C_+$). Also, $\xi>0$ is arbitrary. So, we see that for all large $t>0$, we have
	\begin{equation}\label{eq11}
		\xi t^2||\hat{u}_1(p)||^{2}_{2}-(\xi+c_1)t^2\left|\{0\leq t\hat{u}_1(p)<M_1\}\right|_N\geq-(m_1-1)+\frac{t^2}{2}||D\hat{u}_1(p)||^{2}_{2}.
	\end{equation}
	
	From (\ref{eq10}) and (\ref{eq11}) and for $t_0^1>0$ big, we have
	\begin{equation}\label{eq12}
		\int_{\Omega}F(z,t\hat{u}_1(p))dz\geq-(m_1-1)+\frac{t^2}{2}||D\hat{u}_1(p)||^{2}_{2}\ \mbox{for all}\ t\geq t_0^1.
	\end{equation}
	
	So, we have
	\begin{eqnarray*}
		 \varphi_{\lambda}(t_0^1\hat{u}_1(p))&=&\frac{(t_0^1)^p}{p}||D\hat{u}_1(p)||^{p}_{p}+\frac{(t_0^1)^2}{2}||D\hat{u}_1(p)||^{2}_{2}-\frac{\lambda(t_0^1)^p}{p}||\hat{u}_1(p)||^{p}_{p}-\\
		&&\hspace{7cm}\int_{\Omega}F(z,t_0^1\hat{u}_1(p))dz\\
		 &\leq&\frac{(t_0^1)^p[\hat{\lambda}_1(p)-\lambda]}{p}+\frac{(t_0^1)^2}{2}||D\hat{u}_1(p)||^{2}_{2}+m_1-1-\frac{(t_0^1)^2}{2}||D\hat{u}_1(p)||^{2}_{2}\\
		&&\hspace{4cm}(\mbox{see (\ref{eq12}) and recall that}\ ||\hat{u}_1(p)||_p=1)\\
		&\leq&\frac{(t_0^1)^p\epsilon}{p}+m_1-1\ \mbox{with}\ \epsilon>0\ (\mbox{recall}\ \lambda<\hat{\lambda}_1(p))\\
		&<&m_1\ \left(\mbox{by choosing}\ \epsilon>0\ \mbox{small such that}\ t_0^1<\left(\frac{p}{\epsilon}\right)^{1/p}\right).
	\end{eqnarray*}
	
	In a similar fashion, we can find large $t_0^2>0$ such that
	$$\varphi_{\lambda}(-t_0\hat{u}_1(p))<0\ \mbox{for all}\ \lambda\in(\hat{\lambda}_1(p)-\epsilon,\hat{\lambda}_1(p)),\ \mbox{all}\ t\geq t_0^2.$$
	
	Let $t_0=\max\{t_0^1,t_0^2\}$. Then
	$$\varphi_{\lambda}(\pm t_0\hat{u}_1(p))<m_1\ \mbox{for all}\ \lambda\in(\hat{\lambda}_1(p)-\epsilon, \hat{\lambda}_1(p))\ \mbox{with small}\ \epsilon>0\,.$$
\end{proof}

We introduce the following sets
\begin{eqnarray*}
	&&U_+=\{u\in W^{1,p}_{0}(\Omega):u=t\hat{u}_1(p)+v,\ t>0,\ v\in V\},\\
	&&U_-=\{u\in W^{1,p}_{0}(\Omega):u=-t\hat{u}_1(p)+v,\ t>0,\ v\in V\}.
\end{eqnarray*}
\begin{prop}\label{prop12}
	If hypothesis $H_1$ holds and $\lambda\in(\hat{\lambda}_1(p)-\epsilon,\hat{\lambda}_1(p))$ with $\epsilon>0$ as in Proposition \ref{prop11}, then problem \eqref{eqP} has at least two nontrivial solutions
	$$\hat{u}_+\in U_+\ \mbox{and}\ \hat{u}_-\in U_-$$
and	both are local minimizers of the energy functional $\varphi_{\lambda}$.
\end{prop}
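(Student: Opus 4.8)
The plan is to obtain $\hat u_+$ as a global minimizer of $\varphi_\lambda$ over the closed half-space $\overline{U}_+=\{t\hat u_1(p)+v:t\ge 0,\ v\in V\}$, to show by an energy comparison that this minimizer actually lies in the open part $U_+$, and then to conclude that it is a local minimizer of $\varphi_\lambda$ on all of $W^{1,p}_0(\Omega)$, hence a weak solution of \eqref{eqP}; the solution $\hat u_-$ is produced symmetrically over $\overline{U}_-$.

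First I would check that $\varphi_\lambda$ is sequentially weakly lower semicontinuous: $u\mapsto\frac1p\|Du\|_p^p$ and $u\mapsto\frac12\|Du\|_2^2$ are continuous and convex, hence weakly l.s.c., while $u\mapsto\frac{\lambda}{p}\|u\|_p^p+\int_\Omega F(z,u)\,dz$ is sequentially weakly continuous because $W^{1,p}_0(\Omega)\hookrightarrow L^p(\Omega)$ compactly and, by $H_1(i),(ii)$, $f$ has $(p-1)$-growth, so $N_F$ maps $L^p(\Omega)$ continuously into $L^1(\Omega)$. Since $\lambda\in(\hat\lambda_1(p)-\epsilon,\hat\lambda_1(p))\subseteq(0,\hat\lambda_1(p))$, $\varphi_\lambda$ is coercive on $W^{1,p}_0(\Omega)$ by Proposition \ref{prop8}, hence coercive on $\overline{U}_+$; moreover $\overline{U}_+$ is closed and convex, hence weakly closed. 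The direct method then yields $\hat u_+\in\overline{U}_+$ with $\varphi_\lambda(\hat u_+)=\inf_{\overline{U}_+}\varphi_\lambda$.

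The crucial step is to rule out $\hat u_+\in\partial U_+=V$. With $m_1=\inf_V\varphi_{\hat\lambda_1(p)}$, Proposition \ref{prop11} provides $t_0>0$ with $\varphi_\lambda(t_0\hat u_1(p))<m_1$, and \eqref{eq8} gives $m_1\le\inf_V\varphi_\lambda$. Since $t_0\hat u_1(p)\in U_+\subseteq\overline{U}_+$,
$$\inf_{\overline{U}_+}\varphi_\lambda\le\varphi_\lambda(t_0\hat u_1(p))<m_1\le\inf_V\varphi_\lambda=\inf_{\partial U_+}\varphi_\lambda,$$
so the minimizer $\hat u_+$ cannot lie on $\partial U_+$; thus $\hat u_+\in U_+$, and in particular $\hat u_+\ne0$ since $0\in V$.

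Finally, $U_+$ is open in $W^{1,p}_0(\Omega)$ and $\hat u_+$ minimizes $\varphi_\lambda$ over $\overline{U}_+\supseteq U_+$, so $\hat u_+$ is a local minimizer of $\varphi_\lambda$ on $W^{1,p}_0(\Omega)$; Fermat's rule gives $\varphi_\lambda'(\hat u_+)=0$, i.e. $\langle A_p(\hat u_+)+A(\hat u_+),h\rangle=\lambda\int_\Omega|\hat u_+|^{p-2}\hat u_+h\,dz+\int_\Omega f(z,\hat u_+)h\,dz$ for all $h\in W^{1,p}_0(\Omega)$, so $\hat u_+$ solves \eqref{eqP} (and by nonlinear regularity $\hat u_+\in C^1_0(\overline\Omega)$). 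The same argument over $\overline{U}_-$ using the point $-t_0\hat u_1(p)$ gives a second nontrivial local minimizer $\hat u_-\in U_-$, and $U_+\cap U_-=\emptyset$ forces $\hat u_+\ne\hat u_-$. The one genuinely delicate point is the strict inequality $\inf_{\overline{U}_+}\varphi_\lambda<\inf_{\partial U_+}\varphi_\lambda$, which is precisely what Propositions \ref{prop10} and \ref{prop11} were arranged to deliver; the remainder is the direct method plus the openness of $U_\pm$.
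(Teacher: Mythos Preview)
Your argument is correct, and the energy comparison ruling out $\hat u_+\in\partial U_+=V$ is exactly the one the paper uses. The difference lies in how the minimizer on $\overline{U}_+$ is produced. The paper works with the extended functional $\hat\varphi^+_\lambda$ (equal to $\varphi_\lambda$ on $\overline{U}_+$ and $+\infty$ elsewhere), applies Ekeland's variational principle to obtain a minimizing sequence $\{u_n\}\subseteq U_+$ with $(1+\|u_n\|)\varphi_\lambda'(u_n)\to 0$, and then invokes the $C$-condition (available because $\varphi_\lambda$ is coercive) to extract a strongly convergent subsequence whose limit realizes the infimum. You instead exploit that $\overline{U}_+$ is a closed \emph{convex} half-space, hence weakly closed, and combine this with the sequential weak lower semicontinuity and coercivity of $\varphi_\lambda$ to apply the direct method. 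Your route is shorter and more elementary here precisely because of the convexity of $\overline{U}_+$; the Ekeland approach would still go through if the constraint set were merely strongly closed, which is why the paper may prefer it as a template. Either way, once $\hat u_+\in U_+$ is established, the openness of $U_+$ (it is the preimage of $(0,\infty)$ under the continuous linear functional $u\mapsto\int_\Omega u\,\hat u_1(p)^{p-1}dz$) gives the local minimizer conclusion, and the rest is identical.
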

\begin{proof}
	We introduce the functional
	$$\hat{\varphi}^+_{\lambda}(u)=\left\{
	\begin{array}{ll}
		\varphi_{\lambda}(u)&\mbox{if}\ u\in\overline{U}_+\\
		+\infty&\mbox{if}\ u\notin\overline{U}_+.
	\end{array}\right.$$
	
	Evidently, $\hat{\varphi}^+_{\lambda}$ is lower semicontinuous and bounded from below (see Proposition \ref{prop8}). So, we can apply the Ekeland variational principle (see, for example, Gasinski and Papageorgiou \cite[p. 582]{13}) and $\{u_n\}_{n\geq 1}\subseteq U_+$ such that
	\begin{eqnarray}
		&&\varphi_{\lambda}(u_n)=\hat{\varphi}^+_{\lambda}(u_n)\downarrow \inf \hat{\varphi}^+_{\lambda}\ \mbox{as}\ n\rightarrow\infty\label{eq13}\\
		 &&\varphi_{\lambda}(u_n)=\hat{\varphi}^+_{\lambda}(u_n)\leq\hat{\varphi}^+_{\lambda}(y)+\frac{1}{n(1+||u_n||)}||y-u_n||\label{eq14}\\
		&&\hspace{3cm}\mbox{for all}\ y\in W^{1,p}_{0}(\Omega),\ \mbox{all}\ n\geq 1.\nonumber
	\end{eqnarray}
	Fix $n\geq 1$ and let $h\in W^{1,p}_{0}(\Omega)$. Then for small $t>0$ we have $u_n+th\in U_+$. Using this as a test function in (\ref{eq14}), we have
	\begin{eqnarray}\label{eq15}
		 &&-\frac{||h||}{n(1+||u_n||)}\leq\frac{\varphi_{\lambda}(u_n+th)-\varphi_{\lambda}(u_n)}{t}\ (\mbox{note that}\ \left.\varphi_{\lambda}\right|_{\overline{U}_1}=\hat{\varphi}^+_{\lambda}\left.\right|_{\overline{U}_+})\nonumber\\
		&\Rightarrow&-\frac{||h||}{n(1+||u_n||)}\leq\left\langle \varphi'_{\lambda}(u_n),h\right\rangle\ (\mbox{recall}\ \varphi_{\lambda}\in C^1(W^{1,p}_{0}(\Omega))).
	\end{eqnarray}
	
	Since $h\in W^{1,p}_{0}(\Omega)$ is arbitrary, from (\ref{eq15}) it follows that
	$$(1+||u_n||)\varphi'_{\lambda}(u_n)\rightarrow 0\ \mbox{in}\ W^{-1,p'}(\Omega)\ \mbox{as}\ n\rightarrow\infty.$$
	
	But $\varphi_{\lambda}$ being coercive, satisfies the $C$-condition (see \cite{27}). So, it follows that
	$$u_n\rightarrow\hat{u}_+\ \mbox{in}\ W^{1,p}_{0}(\Omega)\ \mbox{as}\ n\rightarrow\infty.$$
	
	We have $\hat{u}_+\in\overline{U}_+$ and so from (\ref{eq13}) we infer that
	$$\varphi_{\lambda}(\hat{u}_+)=\inf\limits_{\overline{U}_+} \varphi_{\lambda}$$
	
	Suppose that $\hat{u}_+\in\partial U_+=V$. Then
	$$m_1\leq\inf\limits_{\overline{U}_+} \varphi_{\lambda}=\varphi_{\lambda}(\hat{u}_+)\ \mbox{(see (\ref{eq8}))},$$
	which contradicts Proposition \ref{prop11}. Therefore $\hat{u}_+\in U_+$ and it is a local minimizer of $\varphi_{\lambda}$, hence a nontrivial solution of \eqref{eqP}. By Ladyzhenskaya and Uraltseva \cite[p. 286]{15} we have $\hat{u}_+\in L^{\infty}(\Omega)$. Then we can apply Theorem 1 of Lieberman \cite{16} and obtain that $\hat{u}_+\in C^{1}_{0}(\overline{\Omega})$.
	
	Similarly, working with the functional
	$$\hat{\varphi}_{\lambda}(u)=\left\{
	\begin{array}{ll}
		\varphi_{\lambda}(u)&\mbox{if}\ u\in\overline{U}_-\\
		+\infty&\mbox{if}\ u\notin\overline{U}_-,
	\end{array}\right.$$
	we obtain a second nontrivial solution $\hat{u}_-\in U_-\cap C^{1}_{0}(\overline{\Omega})$, which is a local minimizer of $\varphi_{\lambda}$ and is distinct from $\hat{u}_+$.
\end{proof}

Next, using Morse theory, we will produce the third nontrivial solution. To this end, we need to compute the critical groups of $\varphi_{\lambda}$ at the origin.
\begin{prop}\label{prop13}
	If hypotheses $H_1$ hold and $\lambda>0$, then $C_k(\varphi_{\lambda},0)=\delta_{k,d_m}\ZZ$ for all $k\geq 0$ with $d_m=\mbox{dim}\,\overset{m}{\underset{\mathrm{i=1}}\oplus}\ E(\hat{\lambda}_i(2))\geq 2.$
\end{prop}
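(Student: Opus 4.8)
The plan is to compute the critical groups of $\varphi_\lambda$ at the origin by exploiting the fact that, near $u=0$, the hypothesis $H_1(iii)$ makes $\varphi_\lambda$ look like the energy functional of a $2$-linear problem with potential $\eta$ straddling the gap $(\hat\lambda_m(2),\hat\lambda_{m+1}(2))$. Since $p>2$, the $p$-Laplacian term $\frac1p\|Du\|_p^p$ and the parametric term $\frac{\lambda}{p}\|u\|_p^p$ are both of higher order ($o(\|u\|^2)$ in the relevant topology when restricted to the finite-dimensional pieces, and harmless on the complement), so the local behavior of $\varphi_\lambda$ at $0$ is governed entirely by the quadratic form $u\mapsto \|Du\|_2^2-\int_\Omega \eta(z)u^2\,dz$. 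The first step is therefore to set $H_- = \oplus_{i=1}^m E(\hat\lambda_i(2))$ and $H_+ = \overline{\oplus_{i\ge m+1} E(\hat\lambda_i(2))}$, so that $H^1_0(\Omega)=H_-\oplus H_+$ with $\dim H_- = d_m\ge 2$, and to invoke Proposition \ref{prop5}: part (b) with $k=m$ and $\vartheta=\eta$ gives $\|Du\|_2^2-\int_\Omega\eta u^2\,dz \le -\hat\xi_1\|u\|^2$ on $H_-$, while part (a) with $k=m+1$ and $\vartheta=\eta$ gives $\|Du\|_2^2-\int_\Omega\eta u^2\,dz \ge \hat\xi_0\|u\|^2$ on $H_+$.

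Next I would establish the two one-sided local estimates. For $u\in H_+$ with $\|u\|$ small: using $F(z,x)=\tfrac12\eta(z)x^2+o(x^2)$ as $x\to0$ (from $H_1(iii)$) together with the subcritical growth bound $H_1(i)$ to control the tail, one gets $\int_\Omega F(z,u)\,dz \le \tfrac12\int_\Omega \eta u^2\,dz + \tfrac{\hat\xi_0}{4}\|u\|^2 + c\|u\|^{r}$ for some $r>2$; dropping the nonnegative terms $\frac1p\|Du\|_p^p$ and keeping $-\frac\lambda p\|u\|_p^p \ge -c'\|u\|^p$ (with $p>2$), we obtain $\varphi_\lambda(u)\ge \tfrac14\hat\xi_0\|u\|^2 - c''\|u\|^{\min\{r,p\}} > 0$ for $0<\|u\|\le\rho$ small, $u\neq0$. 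For $u\in H_-$ (a finite-dimensional space, so all norms are equivalent) with $\|u\|$ small: here $\frac12\|Du\|_2^2-\int_\Omega F(z,u)\,dz \le -\tfrac{\hat\xi_1}{4}\|u\|^2$, and since on $H_-$ we have $\frac1p\|Du\|_p^p \le c\|u\|^p = o(\|u\|^2)$, while $-\frac\lambda p\|u\|_p^p\le 0$, we get $\varphi_\lambda(u)\le -\tfrac{\hat\xi_1}{8}\|u\|^2<0$ for $0<\|u\|\le\rho'$, $u\neq0$. In particular, along every ray in $H_-$ the function $t\mapsto\varphi_\lambda(tu)$ is negative for small $t>0$.

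The final step is the homotopy/deformation argument that turns these estimates into a computation of $C_k(\varphi_\lambda,0)$. Since $0$ is the only critical point in a small ball (this needs a brief separate argument: a critical point $u$ with $\|u\|$ small satisfies $A_p(u)+A(u)=\lambda|u|^{p-2}u+N_f(u)$, and testing with $u$ together with the estimates above forces $u=0$ — alternatively one assumes $0$ is isolated, which is the generic situation and is what is used elsewhere in the paper), one shows $\varphi_\lambda$ has a local linking at $0$ with respect to the splitting $H^1_0(\Omega)=H_-\oplus H_+$, extended to the pair $(W^{1,p}_0(\Omega)\cap H_-)\oplus(\text{complement})$; by the standard result relating local linking to critical groups (e.g. via the negative-gradient-flow deformation, pushing $\varphi_\lambda^0\cap B_\rho$ onto $(H_-\cap B_\rho)$ and then radially contracting the punctured disc), one concludes $C_k(\varphi_\lambda,0)=H_k(H_-\cap B_\rho, (H_-\cap B_\rho)\setminus\{0\}) = \delta_{k,d_m}\,\ZZ$, using $\dim H_-=d_m$ and the computation of the homology of a $d_m$-ball relative to its punctured version. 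The main obstacle is the second step's interplay of the two exponents $p>2$: one must be careful that the $p$-homogeneous terms are genuinely negligible next to the $2$-homogeneous quadratic form on the finite-dimensional negative space $H_-$ (where they are $o(\|u\|^2)$) and merely have the right sign on the positive space $H_+$ (where $-\frac\lambda p\|u\|_p^p$ could a priori be only $O(\|u\|^p)$ with $p>2$, hence still negligible) — getting these bookkeeping estimates uniform in a small ball is the delicate point, after which the Morse-theoretic conclusion is routine.
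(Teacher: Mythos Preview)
Your route is genuinely different from the paper's. The paper does not attempt a direct local-linking computation for $\varphi_\lambda$; instead it deforms $\varphi_\lambda$ to the $C^2$ model functional
\[
\psi(u)=\tfrac{1}{p}\|Du\|_p^p+\tfrac{1}{2}\|Du\|_2^2-\tfrac{1}{2}\int_\Omega\eta(z)u^2\,dz
\]
along the linear homotopy $h_\lambda(t,u)=(1-t)\varphi_\lambda(u)+t\psi(u)$, shows by a blow-up and $C^{1,\alpha}$-regularity argument that $u=0$ stays an isolated critical point of $h_\lambda(t,\cdot)$ for all $t\in[0,1]$, and then invokes the Cingolani--Vannella theorem to read off $C_k(\psi,0)=\delta_{k,d_m}\ZZ$, hence $C_k(\varphi_\lambda,0)=\delta_{k,d_m}\ZZ$ by homotopy invariance.

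The principal gap in your proposal is the last step. A local linking at $0$ with respect to $X=H_-\oplus H_+$, $\dim H_-=d_m$, gives only $C_{d_m}(\varphi_\lambda,0)\neq 0$ (this is the Liu/Su result); it does \emph{not} by itself yield $C_k(\varphi_\lambda,0)=\delta_{k,d_m}\ZZ$ for all $k\geq 0$. The deformation you describe --- ``pushing $\varphi_\lambda^0\cap B_\rho$ onto $H_-\cap B_\rho$ via the negative-gradient flow'' --- is not a standard consequence of the two one-sided estimates: in a Banach space, without a Morse (or Gromoll--Meyer) lemma, there is no a priori reason the sublevel set near $0$ has the homotopy type of the negative disc. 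Producing precisely such a deformation for $(p,2)$-type functionals is the substance of the Cingolani--Vannella paper, and it requires the $C^2$ structure of $\psi$ together with the fact that $\psi''(0)$ is the quadratic form of $-\Delta-\eta$; transplanting that argument directly to $\varphi_\lambda$ (where the extra terms $-\frac{\lambda}{p}\|u\|_p^p-\int_\Omega F$ do not vanish to second order in a way compatible with a clean normal form) is exactly the work the paper avoids by first homotoping to $\psi$.

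A smaller point: your estimate on $H_+\cap W^{1,p}_0(\Omega)$ is written as $\varphi_\lambda(u)\ge \tfrac14\hat\xi_0\|u\|^2-c''\|u\|^{\min\{r,p\}}$ with $\|u\|=\|Du\|_p$, but Proposition~\ref{prop5} controls $\|Du\|_2$, not $\|Du\|_p$. The conclusion $\varphi_\lambda(u)>0$ for $0<\|u\|\le\rho$ is in fact salvageable --- one keeps the term $\frac{1}{p}\|Du\|_p^p$ and argues by contradiction and compactness that $C\|u\|_p^p$ cannot dominate $\frac{1}{p}\|Du\|_p^p+\frac{\hat\xi_0}{4}\|Du\|_2^2$ as $\|Du\|_p\to 0$ --- but it needs this extra step, not the inequality you wrote.
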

\begin{proof}
	Let $\psi:W^{1,p}_{0}(\Omega)\rightarrow\RR$ be the $C^2$-functional defined by
	 $$\psi(u)=\frac{1}{p}||Du||^{p}_{p}+\frac{1}{2}||Du||^{2}_{2}-\frac{1}{2}\int_{\Omega}\eta(z)u(z)^2dz\ \mbox{for all}\ u\in W^{1,p}_{0}(\Omega).$$
	
	We consider the homotopy
	$$h_{\lambda}(t,u)=(1-t)\varphi_{\lambda}(u)+t\psi(u)\ \mbox{for all}\ (t,u)\in[0,1]\times W^{1,p}_{0}(\Omega).$$
	
	Suppose that we can find $\{t_n\}_{n\geq 1}\subseteq[0,1]$ and $\{u_n\}_{n\geq 1}\subseteq W^{1,p}_{0}(\Omega)$ such that
	\begin{eqnarray}\label{eq16}
		&&t_n\rightarrow t\ \mbox{in}\ [0,1],\ u_n\rightarrow 0\ \mbox{in}\ W^{1,p}_{0}(\Omega)\ \mbox{as}\ n\rightarrow\infty\ \mbox{and}\ (h_{\lambda})'_u(t_n,u_n)=0\\
		&&\hspace{9cm}\mbox{for all}\ n\geq 1.\nonumber
	\end{eqnarray}
	
	We have
	\begin{equation}\label{eq17}
		A_p(u_n)+A(u_n)=(1-t_n)\lambda|u_n|^{p-2}u_n+(1-t_n)N_f(u_n)+t_n\eta u_n\ \mbox{for all}\ n\geq 1.
	\end{equation}
	
	Let $y_n=\frac{u_n}{||u_n||}\ n\geq 1$. Then $||y_n||=1$ for all $n\geq 1$ and so may assume that
	\begin{equation}\label{eq18}
		y_n\stackrel{w}{\rightarrow} y\ \mbox{in}\ W^{1,p}_{0}(\Omega)\ \mbox{and}\ y_n\rightarrow y\ \mbox{in}\ L^p(\Omega)\ \mbox{as}\ n\rightarrow\infty.
	\end{equation}
	
	From (\ref{eq17}), we have
	\begin{eqnarray}\label{eq19}
		 &&||u_n||^{p-2}A_p(y_n)+A(y_n)=(1-t_n)\lambda|u_n|^{p-2}y_n+(1-t_n)\frac{N_f(u_n)}{||u_n||}+t_n\eta y_n\\
		&&\hspace{8cm}\mbox{for all}\ n\geq 1.\nonumber
	\end{eqnarray}
	
	Note that hypothesis $H_1(i)$ and (\ref{eq16}), imply that $\left\{\frac{N_f(u_n)}{||u_n||}\right\}_{n\geq 1}\subseteq L^2(\Omega)$ is bounded. This fact, in conjunction with hypothesis $H_1(iii)$ implies (at least for a subsequence) that
	\begin{equation}\label{eq20}
		\frac{N_f(u_n)}{||u_n||}\ \stackrel{w}{\rightarrow}\ \eta y\ \mbox{in}\ L^2(\Omega)\ \mbox{as}\ n\rightarrow \infty\ (\mbox{see \cite{1}}).
	\end{equation}
	
	Also, we have that $\{A_p(y_n)\}_{n\geq 1}\subseteq W^{-1,p'}(\Omega)$ is bounded (see (\ref{eq18}) and Proposition \ref{prop6}). Therefore
	\begin{equation}\label{eq21}
		||u_n||^{p-2}A_p(y_n)\rightarrow 0\ \mbox{in}\ W^{-1,p'}(\Omega)\ \mbox{as}\ n\rightarrow\infty\ (\mbox{see (\ref{eq16})}).
	\end{equation}
	
	So, if in (\ref{eq19}) we pass to the limit as $n\rightarrow\infty$ and use (\ref{eq18}), (\ref{eq20}), (\ref{eq21}), then
	\begin{eqnarray}\label{eq22}
		&&A(y)=\eta y,\nonumber\\
		&\Rightarrow&-\Delta y(z)=\eta(z)y(z)\ \mbox{for a.a.}\ z\in\Omega,\ y\left.\right|_{\partial\Omega}=0.
	\end{eqnarray}
	
	From hypothesis $H_1(iii)$ and (\ref{eq22}) it follows that $y\equiv 0$. On the other hand, from (\ref{eq19}) we have
	\begin{equation}\label{eq23}
		\left\{\begin{array}{ll}
			||u_n||^{p-2}(-\Delta_py_n(z))-\Delta y_n(z)=&(1-t_n)\lambda|y_n(z)|^{p-2}y_n(z)+(1-t_n)\frac{f(z,u_n(z))}{||u_n||}\\
			&+t_n\eta(z)y_n(z)\ \mbox{for a.a.}\ z\in\Omega,\\
			\left.u_n\right|_{\partial\Omega}=0&
		\end{array}\right\}
	\end{equation}
	
	Then by (\ref{eq23}) and Ladyzhenskaya and Uraltseva \cite[p. 286]{15}, we know that we can find $M_2>0$ such that
	\begin{equation}\label{eq24}
		||u_n||_{\infty}\leq M_2\ \mbox{for all}\ n\geq 1.
	\end{equation}
	
	Since $||u_n||^{p-2}\rightarrow 0$ as $n\rightarrow\infty$ (see (\ref{eq16})), from (\ref{eq23}), (\ref{eq24}) and Theorem 1 of Lieberman \cite{16}, we know that there exist $\alpha\in(0,1)$ and $M_3>0$ such that
	$$y_n\in C^{1,\alpha}_{0}(\overline{\Omega})\ \mbox{and}\ ||y_n||_{C^{1,\alpha}_{0}(\overline{\Omega})}\leq M_3\ \mbox{for all}\ n\geq 1.$$
	
	Exploiting the compact embedding of $C^{1,\alpha}_{0}(\overline{\Omega})$ into $C^{1}_{0}(\overline{\Omega})$ and using (\ref{eq18}), we have
	\begin{eqnarray*}
		&&y_n\rightarrow y=0\ \mbox{in}\ C^{1}_{0}(\overline{\Omega})\ \mbox{as}\ n\rightarrow\infty,\\
		&\Rightarrow&y_n\rightarrow y=0\ \mbox{in}\ W^{1,p}_{0}(\Omega)\ \mbox{as}\ n\rightarrow\infty,
	\end{eqnarray*}
	which contradicts the fact that $||y_n||=1$ for all $n\geq 1$. Hence (\ref{eq16}) cannot occur and so by the homotopy invariance of critical groups we have
	\begin{equation}\label{eq25}
		C_k(\varphi_{\lambda},0)=C_k(\psi,0)\ \mbox{for all}\ k\geq 0.
	\end{equation}
	
	From Cingolani and Vannella \cite[Theorem 1.1]{9}  we know that
	\begin{eqnarray*}
		&&C_k(\psi,0)=\delta_{k,d_m}\ZZ\ \mbox{for all}\ k\geq 0,\\
		&\Rightarrow&C_k(\varphi_{\lambda},0)=\delta_{k,d_m}\ZZ\ \mbox{for all}\ k\geq 0\ \mbox{(see (\ref{eq25}))}.
	\end{eqnarray*}
\end{proof}
Now we can generate the third nontrivial solution.
\begin{prop}\label{prop14}
	If hypotheses $H_1$ hold and $\lambda\in(\hat{\lambda}_1(p)-\epsilon,\hat{\lambda}_1(p))$ with $\epsilon>0$ as in Proposition \ref{prop11}, then problem \eqref{eqP} admits a third nontrivial solution $\hat{y}\in C^{1}_{0}(\overline{\Omega})$.
\end{prop}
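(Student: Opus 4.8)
The plan is to close the argument by Morse theory, using everything already assembled in this section. Recall that $\varphi_{\lambda}$ is coercive by Proposition \ref{prop8} (applicable since $\lambda<\hat{\lambda}_1(p)$), hence bounded below and satisfying the $C$-condition, and that Proposition \ref{prop12} has produced two \emph{distinct} nontrivial local minimizers $\hat{u}_+\in U_+$ and $\hat{u}_-\in U_-$ of $\varphi_{\lambda}$, both lying in $C^1_0(\overline{\Omega})$. First I would dispose of the easy case: if $K_{\varphi_{\lambda}}$ is infinite, or finite with at least four elements, then there is a critical point $\hat{y}\notin\{0,\hat{u}_+,\hat{u}_-\}$, which is automatically a third nontrivial solution (nontrivial because it differs from $0$), and regularity finishes the proof. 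So from now on I argue by contradiction, assuming $K_{\varphi_{\lambda}}=\{0,\hat{u}_+,\hat{u}_-\}$ exactly.

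Next I would record the critical groups of these three points. Since $\varphi_{\lambda}$ satisfies the $C$-condition and is bounded below, its infimum over $W^{1,p}_0(\Omega)$ is attained at a critical point (minimizing sequence $+$ Ekeland $+$ $C$-condition), so $\inf_{W^{1,p}_0(\Omega)}\varphi_{\lambda}=\inf\varphi_{\lambda}(K_{\varphi_{\lambda}})$; choosing $c$ strictly below this level gives $\varphi_{\lambda}^c=\emptyset$, whence
$$C_k(\varphi_{\lambda},\infty)=H_k(W^{1,p}_0(\Omega),\emptyset)=H_k(W^{1,p}_0(\Omega))=\delta_{k,0}\ZZ\quad\text{for all }k\geq0,$$
because $W^{1,p}_0(\Omega)$ is contractible. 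Under the contradiction hypothesis, $K_{\varphi_{\lambda}}$ is finite, so $\hat{u}_{\pm}$ are isolated critical points; being local minimizers, $C_k(\varphi_{\lambda},\hat{u}_{\pm})=\delta_{k,0}\ZZ$ for all $k\geq0$. Finally, Proposition \ref{prop13} gives $C_k(\varphi_{\lambda},0)=\delta_{k,d_m}\ZZ$ with $d_m\geq2$.

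Now I would feed these into the Morse relation \eqref{eq4}. Writing $Q(t)=\sum_{k\geq0}\beta_kt^k$ with coefficients $\beta_k\geq0$ (a polynomial, since $K_{\varphi_{\lambda}}$ is finite), \eqref{eq4} becomes
$$2+t^{d_m}=1+(1+t)Q(t).$$
Matching constant terms forces $\beta_0=1$. Matching the coefficient of $t^1$ — and here the hypothesis $d_m\geq2$ is essential, making that coefficient on the left-hand side equal to $0$ — gives $\beta_1+\beta_0=0$, i.e. $\beta_1=-1<0$, contradicting $\beta_1\geq0$. Hence $K_{\varphi_{\lambda}}\neq\{0,\hat{u}_+,\hat{u}_-\}$, so there exists $\hat{y}\in K_{\varphi_{\lambda}}\setminus\{0,\hat{u}_+,\hat{u}_-\}$, a nontrivial solution of \eqref{eqP} distinct from $\hat{u}_+$ and $\hat{u}_-$. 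Then $\hat{y}\in L^{\infty}(\Omega)$ by Ladyzhenskaya and Uraltseva \cite[p. 286]{15}, and Theorem 1 of Lieberman \cite{16} yields $\hat{y}\in C^1_0(\overline{\Omega})$.

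I do not expect a genuinely hard obstacle here: the analytic heavy lifting (coercivity, the two local minimizers, and especially the computation $C_k(\varphi_{\lambda},0)=\delta_{k,d_m}\ZZ$) has already been carried out in Propositions \ref{prop8}, \ref{prop12} and \ref{prop13}. The only points needing care are the bookkeeping ones: justifying $C_k(\varphi_{\lambda},\infty)=\delta_{k,0}\ZZ$ from coercivity plus the $C$-condition, noting that the contradiction hypothesis renders every critical point isolated (so that the critical groups are defined and equal $\delta_{k,0}\ZZ$ at the minimizers), and reading off the sign contradiction from the coefficient of $t^1$ — which is exactly where $m\geq2$, i.e. $d_m\geq2$, enters. (An alternative would be to apply the mountain-pass-between-two-local-minimizers theorem to $\hat{u}_{\pm}$, obtaining $\hat{y}$ with $C_1(\varphi_{\lambda},\hat{y})\neq0$, automatically different from $0$ since $C_1(\varphi_{\lambda},0)=0$ and from $\hat{u}_{\pm}$ since these are local minimizers; but the Morse-relation route above is cleaner with the tools of Section \ref{sec2}.)
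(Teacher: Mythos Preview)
Your proof is correct, but it takes a different route from the paper's. The paper applies the mountain pass theorem (Theorem~\ref{th3}) directly between the two local minimizers $\hat{u}_+$ and $\hat{u}_-$: assuming $K_{\varphi_\lambda}$ finite and (say) $\varphi_\lambda(\hat{u}_-)\leq\varphi_\lambda(\hat{u}_+)$, it finds $\rho>0$ small with $\varphi_\lambda(\hat{u}_-)\leq\varphi_\lambda(\hat{u}_+)<\inf\{\varphi_\lambda(u):\|u-\hat{u}_+\|=\rho\}$, obtains a mountain-pass critical point $\hat{y}$ with $C_1(\varphi_\lambda,\hat{y})\neq 0$, and concludes $\hat{y}\neq 0$ by comparing with $C_k(\varphi_\lambda,0)=\delta_{k,d_m}\ZZ$, $d_m\geq 2$. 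In other words, the paper follows exactly the ``alternative'' you sketch in your final parenthetical remark.

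The trade-offs: your Morse-relation argument is cleanly self-contained and avoids the (mild) technicality of setting up the strict mountain-pass inequality around an isolated local minimizer. On the other hand, the paper's route yields the extra information $C_1(\varphi_\lambda,\hat{y})\neq 0$, which is not a by-product of your contradiction argument and is actually needed downstream in the proof of Theorem~\ref{th16} (see \eqref{eq31}--\eqref{eq32}). So while both arguments settle Proposition~\ref{prop14}, the paper's approach is the one that feeds into the subsequent four-solution result.
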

\begin{proof}
	Without any loss of generality, we may assume that $\varphi_{\lambda}(\hat{u}_-)\leq\varphi_{\lambda}(\hat{u}_+)$ (the analysis is similar if the opposite inequality holds). Also, we assume that $K_{\varphi_{\lambda}}$ is finite (otherwise we already have infinitely many solutions for problem \eqref{eqP}). From Proposition \ref{prop12}, we know that $\hat{u}_+\in C^{1}_{0}(\overline{\Omega})$ is a local minimizer of $\varphi_{\lambda}$. So, we can find small $\rho\in(0,1)$ such that
	\begin{eqnarray}\label{eq26}		 \varphi_{\lambda}(\hat{u}_-)\leq\varphi_{\lambda}(\hat{u}_+)<\inf[\varphi_{\lambda}(u):||u-\hat{u}_+||=\rho]=m^{\lambda}_{\rho},\ ||\hat{u}_--\hat{u}_+||>\rho
	\end{eqnarray}
	(see Aizicovici, Papageorgiou and Staicu \cite{1}, proof of Proposition 29). Recall that $\varphi_{\lambda}$ satisfies the $C$-condition. This fact and (\ref{eq26}) permit the use of Theorem \ref{th2} (the mountain pass theorem). So, we can find $\hat{y}\in W^{1,p}_{0}(\Omega)$ such that
	\begin{equation}\label{eq27}
		\hat{y}\in K_{\varphi_{\lambda}}\ \mbox{and}\ m^{\lambda}_{\rho}\leq\varphi_{\lambda}(\hat{y}).
	\end{equation}
	
	From (\ref{eq27}) it follows that $\hat{y}$ is a solution of \eqref{eqP} and $\hat{y}\notin\{\hat{u}_-,\hat{u}_+\}$. Since $\hat{y}$ is a critical point of $\varphi_{\lambda}$ of mountain pass type, we have
	\begin{equation}\label{eq28}
		C_1(\varphi_{\lambda},\hat{y})\neq 0.
	\end{equation}
	
	On the other hand, from Proposition \ref{prop13}, we have
	\begin{equation}\label{eq29}
		C_k(\varphi_{\lambda},0)=\delta_{k,d_m}\ZZ\ \mbox{for all}\ k\geq 0\ \mbox{with}\ d_m\geq 2.
	\end{equation}
	
	Comparing (\ref{eq28}) and (\ref{eq29}), we see that $\hat{y}\neq 0$. Nonlinear regularity theory (see \cite{16}) implies $\hat{y}\in C^{1}_{0}(\overline{\Omega})$. This is the third nontrivial solution of \eqref{eqP}.
\end{proof}

So, we can state our first multiplicity theorem for problem \eqref{eqP}.
\begin{theorem}\label{th15}
	If hypotheses $H_1$ hold, then there exists $\epsilon>0$ such that for all $\lambda\in(\hat{\lambda}_1(p)-\epsilon,\hat{\lambda}_1(p))$ problem \eqref{eqP} admits at least three nontrivial solutions
	$$\hat{u}_+,\hat{u}_-,\hat{y}\in C^{1}_{0}(\overline{\Omega}),$$
	with $\hat{u}_+$ and $\hat{u}_-$ being local minimizers of the energy functional $\varphi_{\lambda}$.
\end{theorem}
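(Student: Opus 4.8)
The plan is to assemble the conclusions of the preceding propositions; essentially all the analytic work has already been distributed among Propositions \ref{prop8}--\ref{prop14}, and what remains is to fix the parameter range, collect the three solutions, and verify that they are pairwise distinct and nontrivial. So I would fix $\epsilon>0$ as produced in Proposition \ref{prop11} and take $\lambda\in(\hat{\lambda}_1(p)-\epsilon,\hat{\lambda}_1(p))$ throughout.

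First I would invoke Proposition \ref{prop12} to obtain $\hat{u}_+\in U_+$ and $\hat{u}_-\in U_-$. The mechanism there is: restrict $\varphi_\lambda$ to the closed half-spaces $\overline{U}_\pm$, use that $\varphi_\lambda$ is coercive (Proposition \ref{prop8}), hence bounded below, and satisfies the $C$-condition; apply the Ekeland variational principle to get a minimizing sequence which, by the $C$-condition, converges to a minimizer $\hat{u}_\pm$ of $\varphi_\lambda$ over $\overline{U}_\pm$. The decisive point is that this minimizer cannot lie on the common boundary $\partial U_\pm=V$: if it did, its value would be $\geq m_1=\inf_V\varphi_{\hat{\lambda}_1(p)}$ by \eqref{eq8}, contradicting $\varphi_\lambda(\pm t_0\hat{u}_1(p))<m_1$ from Proposition \ref{prop11}. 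Since $U_\pm$ is open, $\hat{u}_\pm$ is then a local minimizer of $\varphi_\lambda$, hence a weak solution of \eqref{eqP}, and the regularity results of Ladyzhenskaya--Uraltseva and Lieberman upgrade it to $C^1_0(\overline{\Omega})$. Because $U_+\cap U_-=\emptyset$ the two solutions are distinct, and since $0\in V$ neither equals $0$.

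Next I would produce the third solution $\hat{y}$ by the mountain pass theorem, following Proposition \ref{prop14}. Assuming $K_{\varphi_\lambda}$ finite (otherwise there are infinitely many solutions) and, say, $\varphi_\lambda(\hat{u}_-)\leq\varphi_\lambda(\hat{u}_+)$, the fact that $\hat{u}_+$ is a local minimizer yields a small $\rho\in(0,1)$ with $\max\{\varphi_\lambda(\hat{u}_-),\varphi_\lambda(\hat{u}_+)\}<\inf[\varphi_\lambda(u):\|u-\hat{u}_+\|=\rho]$ and $\|\hat{u}_--\hat{u}_+\|>\rho$. Theorem \ref{th3} then gives a critical point $\hat{y}\notin\{\hat{u}_-,\hat{u}_+\}$ of mountain pass type, so $C_1(\varphi_\lambda,\hat{y})\neq 0$. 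By Proposition \ref{prop13}, $C_k(\varphi_\lambda,0)=\delta_{k,d_m}\ZZ$ with $d_m\geq 2$, whence $C_1(\varphi_\lambda,0)=0$; comparing critical groups forces $\hat{y}\neq 0$. Regularity theory again places $\hat{y}\in C^1_0(\overline{\Omega})$, and $\hat{u}_+,\hat{u}_-,\hat{y}$ are the three asserted nontrivial solutions, with $\hat{u}_\pm$ local minimizers of $\varphi_\lambda$.

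The step I expect to be the genuine obstacle — and the one already handled in Proposition \ref{prop13} — is the computation of $C_k(\varphi_\lambda,0)$: the nonhomogeneity of $-\Delta_p-\Delta$ blocks a direct eigenvalue comparison, so one passes, via the homotopy $h_\lambda(t,\cdot)$ joining $\varphi_\lambda$ to the $C^2$-functional $\psi$ encoding the behaviour of $f$ near zero, to a model whose critical groups at the origin are known from Cingolani--Vannella. The homotopy invariance of critical groups then requires excluding critical points of $h_\lambda(t_n,\cdot)$ clustering at $0$ along any sequence $t_n\to t$, which is exactly where hypothesis $H_1(iii)$ (with $\eta$ strictly between consecutive eigenvalues of $-\Delta$) and the nonlinear regularity estimates are used. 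Once that is in place, Theorem \ref{th15} follows immediately by collecting the above.
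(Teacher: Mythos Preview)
Your proposal is correct and follows exactly the paper's approach: Theorem \ref{th15} is stated as a summary of Propositions \ref{prop8}--\ref{prop14}, and your write-up simply (and accurately) assembles those pieces, including the Ekeland/minimization argument on $\overline{U}_\pm$ (Proposition \ref{prop12}), the mountain pass step with the critical-group comparison $C_1(\varphi_\lambda,\hat{y})\neq 0$ versus $C_k(\varphi_\lambda,0)=\delta_{k,d_m}\ZZ$ (Propositions \ref{prop13}--\ref{prop14}), and the regularity upgrade. Nothing is missing.
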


By strengthening the regularity conditions on $f(z,\cdot)$, we can improve Theorem \ref{th15} and produce the fourth nontrivial solution. The new hypotheses on $f(z,x)$ are the following:

\smallskip
$H_2:$ $f:\Omega\times\RR\rightarrow\RR$ is a measurable function such that for a.a. $z\in\Omega$, $f(z,0)=0,\ f(z,\cdot)\in C^1(\RR)$ and
\begin{itemize}
	\item[(i)] for every $\rho>0$, there exists $a_{\rho}\in L^{\infty}(\Omega)_+$ such that
	$$|f(z,x)|\leq a_{\rho}(z)\ \mbox{for a.a.}\ z\in\Omega,\ \mbox{all}\ |x|\leq\rho;$$
	\item[(ii)] $\lim\limits_{x\rightarrow\pm\infty}\ \frac{f(z,x)}{|x|^{p-2}x}=0$ uniformly for a.a. $z\in\Omega$ and if $F(z,x)=\int^{x}_{0}f(z,s)ds$, then
	$$\lim\limits_{x\rightarrow\pm\infty}\ \frac{F(z,x)}{x^2}=+\infty\ \mbox{uniformly for a.a.}\ z\in\Omega;\ \mbox{and}$$
	\item[(iii)] there exists an integer $m\geq 2$ such that
	\begin{eqnarray*}
		&&f'_x(z,0)\in[\hat{\lambda}_{m}(2),\hat{\lambda}_{m+1}(2)]\ \mbox{for a.a.}\ z\in\Omega,\ f'_x(\cdot,0)\not\equiv\hat{\lambda}_m(2),\ f'_x(\cdot,0)\not\equiv\hat{\lambda}_{m+1}(2)\\
		&&f'_x(z,0)=\lim\limits_{x\rightarrow 0}\ \frac{f(z,x)}{x}\ \mbox{uniformly for a.a.}\ z\in\Omega.
	\end{eqnarray*}
\end{itemize}

\begin{theorem}\label{th16}
	If hypotheses $H_2$ hold, then there exists $\epsilon>0$ such that for every $\lambda\in(\hat{\lambda}_1(p)-\epsilon,\hat{\lambda}_1(p))$ problem \eqref{eqP} has at least four nontrivial solutions
	$$\hat{u}_+,\hat{u}_-,\hat{y},\tilde{y}\in C^{1}_{0}(\overline{\Omega})$$
	with $\hat{u}_+$ and $\hat{u}_-$ being local minimizers of the energy functional $\varphi_{\lambda}$.
\end{theorem}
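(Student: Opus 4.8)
The hypotheses $H_2$ are stronger than $H_1$ (take $\eta(z)=f'_x(z,0)$), so Theorem \ref{th15} applies and produces three nontrivial solutions: two local minimizers $\hat u_+\in U_+$ and $\hat u_-\in U_-$ of $\varphi_{\lambda}$ and a third solution $\hat y$ of mountain pass type, all belonging to $C^1_0(\overline{\Omega})$. The extra information contained in $H_2$ is that $f(z,\cdot)\in C^1(\RR)$, whence $\varphi_{\lambda}\in C^2(W^{1,p}_0(\Omega))$. The plan is to invoke the Morse relation \eqref{eq4} for $\varphi_{\lambda}$: if $K_{\varphi_{\lambda}}$ were exactly $\{0,\hat u_+,\hat u_-,\hat y\}$ we shall reach a contradiction, which forces a fourth nontrivial critical point. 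If $K_{\varphi_{\lambda}}$ is infinite there is nothing to prove, so from now on we assume $K_{\varphi_{\lambda}}$ is finite; in particular every critical point is isolated.

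First I would record the critical groups at the known critical points. Being isolated local minimizers of $\varphi_{\lambda}$, the points $\hat u_\pm$ satisfy $C_k(\varphi_{\lambda},\hat u_\pm)=\delta_{k,0}\ZZ$ for all $k\ge 0$. Since $\lambda<\hat\lambda_1(p)$, the functional $\varphi_{\lambda}$ is coercive (Proposition \ref{prop8}), hence bounded below and satisfying the $C$-condition, so $C_k(\varphi_{\lambda},\infty)=\delta_{k,0}\ZZ$ for all $k\ge 0$. By Proposition \ref{prop13}, $C_k(\varphi_{\lambda},0)=\delta_{k,d_m}\ZZ$ with $d_m\ge 2$. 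Finally, since $\varphi_{\lambda}\in C^2$ and $\hat y$ is of mountain pass type, the standard Morse-theoretic description of such critical points (based on $C_1(\varphi_{\lambda},\hat y)\ne 0$ and the shifting theorem, using the finiteness of the Morse index and nullity at $\hat y$) gives
$$C_k(\varphi_{\lambda},\hat y)=\delta_{k,1}\ZZ\ \mbox{for all}\ k\ge 0.$$

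Now suppose $K_{\varphi_{\lambda}}=\{0,\hat u_+,\hat u_-,\hat y\}$. Then \eqref{eq4} reads
$$t^{d_m}+2+t=1+(1+t)Q(t),$$
so that $(1+t)Q(t)=t^{d_m}+t+1$. Evaluating at $t=-1$ yields $0=(-1)^{d_m}$, a contradiction. Therefore $\varphi_{\lambda}$ possesses a further critical point $\tilde y\notin\{0,\hat u_+,\hat u_-,\hat y\}$; in particular $\tilde y\ne 0$, so $\tilde y$ is a fourth nontrivial solution of \eqref{eqP}, and by the same nonlinear regularity argument used before (Ladyzhenskaya--Uraltseva \cite{15} and Lieberman \cite{16}) we obtain $\tilde y\in C^1_0(\overline{\Omega})$. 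Moreover $\hat u_+$ and $\hat u_-$ remain local minimizers of $\varphi_{\lambda}$, which completes the proof.

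The only genuinely delicate step is the identity $C_k(\varphi_{\lambda},\hat y)=\delta_{k,1}\ZZ$. Under $H_1$ the mountain pass theorem delivers merely $C_1(\varphi_{\lambda},\hat y)\ne 0$ (compare the proof of Proposition \ref{prop14}), which is insufficient to run the Morse relation; upgrading this to the full list of critical groups of $\hat y$ is exactly where the $C^2$-regularity coming from $H_2$ enters, together with the Fredholm nature of the second-order operator at the nontrivial point $\hat y$ that makes the shifting theorem applicable. All the remaining ingredients — the critical groups at the local minimizers, at the origin via Proposition \ref{prop13}, at infinity via coercivity, and the bookkeeping in \eqref{eq4} — are routine.
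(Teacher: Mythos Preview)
Your proposal is correct and follows essentially the same approach as the paper's proof: invoke Theorem \ref{th15} for the three solutions, compute the critical groups at $\hat u_\pm$ (local minimizers), at $0$ (Proposition \ref{prop13}), at infinity (coercivity), upgrade $C_1(\varphi_\lambda,\hat y)\neq 0$ to $C_k(\varphi_\lambda,\hat y)=\delta_{k,1}\ZZ$ using the $C^2$-regularity from $H_2$, and then derive a contradiction from the Morse relation at $t=-1$. The only cosmetic difference is that the paper cites Papageorgiou--Smyrlis \cite{26} and Papageorgiou--R\u adulescu \cite{23} for the critical groups of $\hat y$ (results tailored to the $(p,2)$-setting in a Banach space), whereas you invoke ``the shifting theorem'' more generically; since the classical shifting theorem is formulated in Hilbert spaces, it is worth pointing explicitly to those references for the Banach-space version you need.
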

\begin{proof}
	From Theorem \ref{th15}, we already have three nontrivial solutions
	$$\hat{u}_+,\hat{u}_-,\hat{y}\in C^{1}_{0}(\overline{\Omega}),$$
	with $\hat{u}_+$ and $\hat{u}_-$ being local minimizers of $\varphi_{\lambda}$. Hence
	\begin{equation}\label{eq30}
		C_k(\varphi_{\lambda},\hat{u}_+)=C_k(\varphi_{\lambda},\hat{u}_-)=\delta_{k,0}\ZZ\ \mbox{for all}\ k\geq 0.
	\end{equation}
	
	Recall that
	\begin{equation}\label{eq31}
		C_1(\varphi_{\lambda},\hat{y})\neq 0\ (\mbox{see (\ref{eq28})}).
	\end{equation}
	
	Since $\varphi_{\lambda}\in C^2(W^{1,p}_{0}(\Omega))$, from (\ref{eq31}) and Papageorgiou and Smyrlis \cite{26} (see also Papageorgiou and R\u{a}dulescu \cite{23}) it follows that
	\begin{equation}\label{eq32}
		C_k(\varphi_{\lambda},\hat{y})=\delta_{k,1}\ZZ\ \mbox{for all}\ k\geq 0.
	\end{equation}
	
	From Theorem \ref{th15}, we know that
	\begin{equation}\label{eq33}
		C_k(\varphi_{\lambda},0)=\delta_{k,d_m}\ZZ\ \mbox{for all}\ k\geq 0.
	\end{equation}
	
	From Proposition \ref{prop8}, we know that $\varphi_{\lambda}$ is coercive. Therefore
	\begin{equation}\label{eq34}
		C_k(\varphi_{\lambda},\infty)=\delta_{k,0}\ZZ\ \mbox{for all}\ k\geq 0.
	\end{equation}
	
	Suppose that $K_{\varphi_{\lambda}}=\{0,\hat{u}_+,\hat{u}_-,\hat{y}\}$. Then from (\ref{eq30}), (\ref{eq32}), (\ref{eq33}), (\ref{eq34}) and the Morse relation (see (\ref{eq4})) with $t=-1$, we have
	\begin{eqnarray*}
		&&(-1)^{d_m}+2(-1)^0+(-1)^1=(-1)^0,\\
		&\Rightarrow&(-1)^{d_m}=0,\ \mbox{a contradiction}.
	\end{eqnarray*}
	
	So we can find $\tilde{y}\in K_{\varphi_{\lambda}}$, $\tilde{y}\notin\{0,\hat{u}_+,\hat{u}_-,\hat{y}\}$. It follows that $\tilde{y}$ is the fourth nontrivial solution of \eqref{eqP} and the nonlinear regularity theory implies $\tilde{y}\in C^{1}_{0}(\overline{\Omega})$.\end{proof}

\section{Near Resonance from the Right of $\hat{\lambda}_1(p)>0$}\label{sec4}

In this section we examine problem \eqref{eqP} as the parameter $\lambda$ approaches $\hat{\lambda}_1(p)>0$ from the above (from the right). In contrast to the previous case (Section \ref{sec3}), now the energy functional is indefinite.

We start with an existence result which is valid for all $\lambda$ in the open spectral interval $(\hat{\lambda}_1(p),\hat{\lambda}_2(p))$. The hypotheses on the perturbation $f(z,x)$ are the following:

\smallskip
$H_3:$ $f:\Omega\times\RR\rightarrow\RR$ is a Carath\'eodory function such that $f(z,0)=0$ for a.a. $z\in\Omega$ and
\begin{itemize}
	\item[(i)] for every $\rho>0$, there exists $a_{\rho}\in L^{\infty}(\Omega)_+$ such that
	$$|f(z,x)|\leq a_{\rho}(z)\ \mbox{for a.a.}\ z\in\Omega,\ \mbox{all}\ |x|\leq\rho;$$
	\item[(ii)] $\lim\limits_{x\rightarrow\pm\infty}\ \frac{f(z,x)}{|x|^{p-2}x}=0$ uniformly for a.a. $z\in\Omega$;
	\item[(iii)] if $F(z,x)=\int^{x}_{0}f(z,s)ds$, then there exists $\tau\in(2,p)$ and $\beta_0>0$ such that
	$$\beta_0\leq\liminf\limits_{x\rightarrow\pm\infty}\ \frac{pF(z,x)-f(z,x)x}{|x|^{\tau}}\ \mbox{uniformly for a.a.}\ z\in\Omega;\ \mbox{and}$$
	\item[(iv)] there exists a function $\vartheta\in L^{\infty}(\Omega)$ such that
	\begin{eqnarray*}
		&&\vartheta(z)\leq\hat{\lambda}_1(2)\ \mbox{for a.a.}\ z\in\Omega,\ \vartheta\not\equiv\hat{\lambda}_1(2)\\
		&&\limsup\limits_{x\rightarrow 0}\ \frac{2F(z,x)}{x^2}\leq\vartheta(z)\ \mbox{uniformly for a.a.}\ z\in\Omega.
	\end{eqnarray*}
\end{itemize}

As before, for every $\lambda>0$, $\varphi_{\lambda}:W^{1,p}_{0}(\Omega)\rightarrow\RR$ is the energy functional of problem \eqref{eqP} defined by
$$\varphi_{\lambda}(u)=\frac{1}{p}||Du||^{p}_{p}+\frac{1}{2}||Du||^{2}_{2}-\frac{\lambda}{p}||u||^{p}_{p}-\int_{\Omega}F(z,u(z))dz\ \mbox{for all}\ u\in W^{1,p}_{0}(\Omega).$$

We have $\varphi_{\lambda}\in C^1(W^{1,p}_{0}(\Omega))$.
\begin{prop}\label{prop17}
	If hypotheses $H_3$ hold and $\lambda>0$, then $\varphi_{\lambda}$ satisfies the $C$-condition.
\end{prop}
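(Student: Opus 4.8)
The plan is the standard two-part argument for verifying the Cerami condition for an energy functional of this type. Let $\{u_n\}_{n\geq1}\subseteq W^{1,p}_0(\Omega)$ be a Cerami sequence, i.e. $|\varphi_\lambda(u_n)|\leq M_4$ for some $M_4>0$ and $(1+\|u_n\|)\varphi_\lambda'(u_n)\to 0$ in $W^{-1,p'}(\Omega)$. The first and main step is to show that $\{u_n\}$ is bounded in $W^{1,p}_0(\Omega)$; the second step is to extract a strongly convergent subsequence using the $(S)_+$-property of $A_p$ (Proposition \ref{prop6}).

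For boundedness, I would argue by contradiction: suppose $\|u_n\|\to\infty$ and set $y_n=u_n/\|u_n\|$, so $\|y_n\|=1$ and, passing to a subsequence, $y_n\stackrel{w}{\to}y$ in $W^{1,p}_0(\Omega)$ and $y_n\to y$ in $L^p(\Omega)$. Testing $\varphi_\lambda'(u_n)\to 0$ with $h=u_n$ and testing against $h=y_n-y$ after dividing by $\|u_n\|^p$, one uses hypothesis $H_3(ii)$ (the sublinear-in-the-$p$-sense growth of $f$, which makes $\int_\Omega F(z,u_n)/\|u_n\|^p\,dz\to 0$ and $\int_\Omega f(z,u_n)u_n/\|u_n\|^p\,dz\to 0$) together with the fact that the Laplacian term is of lower order relative to $\|u_n\|^p$. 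This yields $\limsup_n\langle A_p(y_n),y_n-y\rangle\leq 0$, hence $y_n\to y$ in $W^{1,p}_0(\Omega)$ by Proposition \ref{prop6}, so $\|y\|=1$ and $A_p(y)=\lambda|y|^{p-2}y$, i.e. $y$ is (up to sign) an eigenfunction of $-\Delta_p$ associated with $\lambda$. Since $\lambda\in(\hat\lambda_1(p),\hat\lambda_2(p))$ lies strictly between the first two eigenvalues and is therefore not an eigenvalue of $(-\Delta_p,W^{1,p}_0(\Omega))$, this forces $y=0$, contradicting $\|y\|=1$. (If one insists on allowing the degenerate boundary case $\lambda=\hat\lambda_1(p)$, then $y=\pm\hat u_1(p)\in\mathrm{int}\,C_+$, and one invokes condition $H_3(iii)$: from $\varphi_\lambda(u_n)$ bounded and $\langle\varphi_\lambda'(u_n),u_n\rangle\to0$ one forms $p\varphi_\lambda(u_n)-\langle\varphi_\lambda'(u_n),u_n\rangle$, in which the $p$-Laplacian and $\lambda\|u_n\|_p^p$ terms cancel, leaving $\tfrac{p-2}{2}\|Du_n\|_2^2+\int_\Omega[pF(z,u_n)-f(z,u_n)u_n]\,dz$ bounded above; the Fatou-type lower bound $\beta_0\leq\liminf_{x\to\pm\infty}(pF(z,x)-f(z,x)x)/|x|^\tau$ together with $u_n(z)=\|u_n\|y_n(z)\to+\infty$ a.e. on $\Omega$ — since $y=\pm\hat u_1(p)$ has constant sign and does not vanish on $\Omega$ — makes $\int_\Omega[pF(z,u_n)-f(z,u_n)u_n]\,dz\geq c\|u_n\|^\tau$ for large $n$, a contradiction.) The hard part is precisely this balancing: the functional is non-homogeneous, so one must be careful which term dominates, and in the right-of-$\hat\lambda_1(p)$ regime the quadratic term no longer helps with coercivity, which is exactly why $H_3(iii)$ is imposed.

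Once $\{u_n\}$ is bounded, I pass to a subsequence with $u_n\stackrel{w}{\to}u$ in $W^{1,p}_0(\Omega)$ and $u_n\to u$ in $L^p(\Omega)$. Testing $(1+\|u_n\|)\varphi_\lambda'(u_n)\to0$ with $h=u_n-u$ gives
$$\langle A_p(u_n),u_n-u\rangle+\langle A(u_n),u_n-u\rangle-\int_\Omega\lambda|u_n|^{p-2}u_n(u_n-u)\,dz-\int_\Omega f(z,u_n)(u_n-u)\,dz\to0.$$
The last two integrals tend to $0$ since $u_n\to u$ in $L^p(\Omega)$ and $\{|u_n|^{p-2}u_n\}$, $\{N_f(u_n)\}$ are bounded in the appropriate dual spaces (using $H_3(i),(ii)$ for the growth of $N_f$); moreover $\langle A(u_n),u_n-u\rangle\to0$ because $A=A_2$ is compact-type on the relevant scale after using the Sobolev embedding of $W^{1,p}_0(\Omega)$ into $H^1_0(\Omega)$ is not available directly — instead one notes $\langle A(u_n),u_n-u\rangle=\int_\Omega(Du_n,Du_n-Du)_{\RR^N}\,dz\geq0$ up to a term controlled by weak convergence, so in any case $\limsup_n\langle A_p(u_n),u_n-u\rangle\leq0$. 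By the $(S)_+$-property of $A_p$ (Proposition \ref{prop6}) we conclude $u_n\to u$ in $W^{1,p}_0(\Omega)$, which is the desired strong convergence. Hence $\varphi_\lambda$ satisfies the $C$-condition. \qed
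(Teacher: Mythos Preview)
Your main line of argument has a genuine gap: Proposition~\ref{prop17} is stated for \emph{all} $\lambda>0$, not only for $\lambda\in(\hat\lambda_1(p),\hat\lambda_2(p))$. Your contradiction hinges on the limit equation $-\Delta_p y=\lambda|y|^{p-2}y$ forcing $y=0$ because ``$\lambda$ is not an eigenvalue'', but the proposition does not exclude $\lambda\in\sigma(p)$. Your parenthetical patch treats only $\lambda=\hat\lambda_1(p)$, exploiting that $\hat u_1(p)$ has constant sign so that $|u_n(z)|\to\infty$ a.e.; for any other eigenvalue the corresponding $y$ is nodal and that pointwise argument does not go through cleanly. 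There is also a sign slip in the parenthetical: what one actually obtains from $p\varphi_\lambda(u_n)-\langle\varphi_\lambda'(u_n),u_n\rangle$ is that
\[
\int_\Omega\bigl[pF(z,u_n)-f(z,u_n)u_n\bigr]\,dz\ \le\ \Bigl(\tfrac{p}{2}-1\Bigr)\|Du_n\|_2^2+C,
\]
i.e.\ the \emph{difference} is bounded, not the sum.

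The paper avoids all spectral considerations by using $H_3(iii)$ from the outset, uniformly in $\lambda$. From the displayed inequality above together with $\beta_1|x|^\tau-c_3\le pF(z,x)-f(z,x)x$ (a consequence of $H_3(i),(iii)$) one gets
\[
\beta_1\|u_n\|_\tau^\tau\ \le\ C+\Bigl(\tfrac{p}{2}-1\Bigr)\|Du_n\|_2^2\ \le\ C+C'\|u_n\|^2.
\]
Dividing by $\|u_n\|^\tau$ with $2<\tau<p$ forces $\|y_n\|_\tau\to 0$, hence $y=0$. Independently, testing the rescaled equation with $y_n-y$ and using the $(S)_+$-property gives $y_n\to y$ strongly, so $\|y\|=1$; the contradiction is now between $y=0$ and $\|y\|=1$, with no appeal to where $\lambda$ sits relative to the spectrum. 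Your second step (extracting strong convergence once boundedness is known) is fine in substance and matches the paper; the clean way to dispose of the Laplacian term is monotonicity of $A$: $\langle A(u_n),u_n-u\rangle\ge\langle A(u),u_n-u\rangle\to 0$, whence $\limsup_n\langle A_p(u_n),u_n-u\rangle\le 0$.
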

\begin{proof}
	Let $\{u_n\}_{n\geq 1}\subseteq W^{1,p}_{0}(\Omega)$ be a sequence such that
	\begin{eqnarray}
		&&|\varphi_{\lambda}(u_n)|\leq M_3\ \mbox{for some}\ M_3>0,\ \mbox{all}\ n\geq 1\label{eq35}\\
		&&(1+||u_n||)\varphi'_{\lambda}(u_n)\rightarrow 0\ \mbox{in}\ W^{-1,p'}(\Omega)\ \mbox{as}\ n\rightarrow\infty.\label{eq36}
	\end{eqnarray}
	
	From (\ref{eq36}) we have
	\begin{eqnarray}\label{eq37}
		&&|\left\langle \varphi'_{\lambda}(u_n),h\right\rangle|\leq\frac{\epsilon_n||h||}{1+||u_n||}\ \mbox{for all}\ h\in W^{1,p}_{0}(\Omega)\ \mbox{with}\ \epsilon_n\rightarrow 0^+,\nonumber\\
		&\Rightarrow&\left|\left\langle A_p(u_n),h\right\rangle+\left\langle A(u_n),h\right\rangle-\lambda\int_{\Omega}|u_n|^{p-2}u_nhdz-\int_{\Omega}f(z,u_n)hdz\right|\leq\frac{\epsilon_n||h||}{1+||u_n||}\\
		&&\hspace{10cm}\mbox{for all}\ n\geq 1.\nonumber
	\end{eqnarray}
	
	In (\ref{eq37}) we choose $h=u_n\in W^{1,p}_{0}(\Omega)$ and obtain
	\begin{equation}\label{eq38}
		 ||Du_n||^{p}_{p}+||Du_n||^{2}_{2}-\lambda||u_n||^{p}_{p}-\int_{\Omega}f(z,u_n)u_ndz\leq \epsilon_n\ \mbox{for all}\ n\geq 1.
	\end{equation}
	
	On the other hand from (\ref{eq35}), we have
	\begin{equation}\label{eq39}
		 -||Du_n||^{p}_{p}-\frac{p}{2}||Du_n||^{2}_{2}+\lambda||u_n||^{p}_{p}+\int_{\Omega}pF(z,u_n)dz\leq pM_3\ \mbox{for all}\ n\geq 1.
	\end{equation}
	
	We add (\ref{eq38}) and (\ref{eq39}). Then
	\begin{eqnarray}\label{eq40}
		&&\int_{\Omega}[pF(z,u_n)-f(z,u_n)u_n]dz\leq M_4+\left(\frac{p}{2}-1\right)||Du_n||^{2}_{2}\\
		&&\hspace{7cm}\mbox{for some}\ M_4>0,\ \mbox{all}\ n\geq 1.\nonumber
	\end{eqnarray}
	
	By virtue of hypotheses $H_3(i),(iii)$, we can find $\beta_1\in(0,\beta_0)$ and $c_3>0$ such that
	\begin{equation}\label{eq41}
		\beta_1|x|^{\tau}-c_3\leq pF(z,x)-f(z,x)x\ \mbox{for a.a.}\ z\in\Omega,\ \mbox{all}\ x\in\RR.
	\end{equation}
	
	We use (\ref{eq41}) in (\ref{eq40}) and obtain
	\begin{equation}\label{eq42}
		\beta_1||u_n||^{\tau}_{\tau}\leq M_5+\left(\frac{p}{2}-1\right)||Du_n||^{2}_{2}\ \mbox{for some}\ M_5>0\ \mbox{and all}\ n\geq 1.
	\end{equation}
	
	Suppose that $\{u_n\}_{n\geq 1}\subseteq W^{1,p}_{0}(\Omega)$ is unbounded. Then $||u_n||\rightarrow\infty$ as $n\rightarrow\infty$. Set $y_n=\frac{u_n}{||u_n||},\ n\geq 1$. By passing to a suitable subsequence if necessary, we may assume that
	\begin{equation}\label{eq43}
		y_n\stackrel{w}{\rightarrow}y\ \mbox{in}\ W^{1,p}_{0}(\Omega)\ \mbox{and}\ y_n\rightarrow y\ \mbox{in}\ L^p(\Omega)\ \mbox{as}\ n\rightarrow\infty.
	\end{equation}
	
	From (\ref{eq42}) we have
	\begin{eqnarray}\label{eq44}
		 &&\beta_1||y_n||^{\tau}_{\tau}\leq\frac{M_5}{||u_n||^{\tau}}+\left(\frac{p}{2}-1\right)\frac{1}{||u_n||^{\tau-2}}||Dy_n||^{2}_{2}\ \mbox{for all}\ n\geq 1,\nonumber\\
		&\Rightarrow&y_n\rightarrow 0\ \mbox{in}\ L^{\tau}(\Omega)\ \mbox{as}\ n\rightarrow\infty\ (\mbox{recall}\ 2<\tau<p),\ \mbox{hence}\ y=0\ (\mbox{see (\ref{eq43})}).
	\end{eqnarray}
	
	On the other hand, from (\ref{eq37}) we have
	\begin{eqnarray}\label{eq45}
		&&\left|\left\langle A_p(y_n),h\right\rangle+\frac{1}{||u_n||^{p-2}}\left\langle A(y_n),h\right\rangle-\lambda\int_{\Omega}|y_n|^{p-2}y_nhdz-\int_{\Omega}\frac{f(z,u_n)}{||u_n||^{p-1}}hdz\right|\leq\epsilon_n\\
		&&\hspace{10cm}\mbox{for all}\ n\geq 1.\nonumber
	\end{eqnarray}
	
	Hypotheses $H_3(i),(ii)$, imply that
	\begin{eqnarray*}
		&&|f(z,x)|\leq c_4(1+|x|^{p-1})\ \mbox{for a.a.}\ z\in\Omega,\ \mbox{all}\ x\in\RR,\ \mbox{some}\ c_4>0,\\
		&\Rightarrow&\left\{\frac{N_f(u_n)}{||u_n||^{p-1}}\right\}_{n\geq 1}\subseteq L^{p'}(\Omega)\ \mbox{is bounded}.
	\end{eqnarray*}
	
	If in (\ref{eq45}) we choose $h=y_n-y\in W^{1,p}_{0}(\Omega)$, pass to the limit as $n\rightarrow\infty$ and use (\ref{eq44}), we obtain
	\begin{eqnarray}\label{eq46}
		&&\lim\limits_{n\rightarrow\infty}\left\langle A_p(y_n),y_n-y\right\rangle=0\ (\mbox{recall}\ p>2),\nonumber\\
		&\Rightarrow&y_n\rightarrow y\ \mbox{in}\ W^{1,p}_{0}(\Omega)\ (\mbox{see Proposition \ref{prop6}}),\ \mbox{hence}\ ||y||=1.
	\end{eqnarray}
	
	Comparing (\ref{eq44}) and (\ref{eq46}), we reach a contradiction. This proves that $\{u_n\}_{n\geq 1}\subseteq W^{1,p}_{0}(\Omega)$ is bounded. So, we may assume that
	\begin{equation}\label{eq47}
		u_n\stackrel{w}{\rightarrow} u\ \mbox{in}\ W^{1,p}_{0}(\Omega)\ \mbox{and}\ u_n\rightarrow u\ \mbox{in}\ L^p(\Omega)\ \mbox{as}\ n\rightarrow\infty.
	\end{equation}
	
	In (\ref{eq37}) we choose $h=u_n-u\in W^{1,p}_{0}(\Omega)$, pass to the limit as $n\rightarrow\infty$ and use (\ref{eq47}). Then
	\begin{eqnarray*}
		&&\lim\limits_{n\rightarrow\infty}\left[\left\langle A_p(u_n),u_n-u\right\rangle+\left\langle A(u_n),u_n-u\right\rangle\right]=0,\\
		&\Rightarrow&\limsup\limits_{n\rightarrow\infty}\left[\left\langle A_p(u_n),u_n-u\right\rangle+\left\langle A(u),u_n-u\right\rangle\right]\leq 0\ (\mbox{since}\ A\ \mbox{is monotone}),\\
		&\Rightarrow&\limsup\limits_{n\rightarrow\infty}\left\langle A_p(u_n),u_n-u\right\rangle\leq 0\ (\mbox{see (\ref{eq47})}),\\
		&\Rightarrow&u_n\rightarrow u\ \mbox{in}\ W^{1,p}_{0}(\Omega)\ \mbox{as}\ n\rightarrow\infty.
	\end{eqnarray*}
	
	This proves that the functional $\varphi_{\lambda}$ satisfies the $C$-condition for all $\lambda>0$.
\end{proof}
\begin{prop}\label{prop18}
	If hypotheses $H_3$ hold and $\lambda>\hat{\lambda}_1(p)$, then $\varphi_{\lambda}(t\hat{u}_1(p))\rightarrow-\infty$ as $t\rightarrow\pm\infty$ (that is, $\left.\varphi_{\lambda}\right|_{\RR\hat{u}_1(p)}$ is anticoercive).
\end{prop}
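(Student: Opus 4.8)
The plan is to evaluate $\varphi_{\lambda}$ directly along the one‑dimensional subspace $\RR\hat{u}_1(p)$ and to show that the parametric term dominates. Since $\hat{u}_1(p)$ is $L^p$‑normalized, $||\hat{u}_1(p)||_p=1$, and by the variational characterization \eqref{eq1} we have $||D\hat{u}_1(p)||^{p}_{p}=\hat{\lambda}_1(p)$. Hence, for every $t\in\RR$,
\begin{align*}
\varphi_{\lambda}(t\hat{u}_1(p))&=\frac{|t|^p}{p}||D\hat{u}_1(p)||^{p}_{p}+\frac{t^2}{2}||D\hat{u}_1(p)||^{2}_{2}-\frac{\lambda|t|^p}{p}||\hat{u}_1(p)||^{p}_{p}-\int_{\Omega}F(z,t\hat{u}_1(p))dz\\
&=\frac{|t|^p}{p}\left[\hat{\lambda}_1(p)-\lambda\right]+\frac{t^2}{2}||D\hat{u}_1(p)||^{2}_{2}-\int_{\Omega}F(z,t\hat{u}_1(p))dz.
\end{align*}

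Next I would control the superposition term. Combining hypotheses $H_3(i)$ and $H_3(ii)$, given $\epsilon>0$ there exists $c_6=c_6(\epsilon)>0$ such that $|f(z,x)|\leq\epsilon|x|^{p-1}+c_6$ for a.a. $z\in\Omega$ and all $x\in\RR$ (for $|x|$ large this is $H_3(ii)$, for $|x|$ bounded it is $H_3(i)$). Integrating, $F(z,x)\geq-\frac{\epsilon}{p}|x|^p-c_6|x|$ for a.a. $z\in\Omega$, all $x\in\RR$, so that, using $\hat{u}_1(p)\in\mbox{int}\,C_+\subseteq L^1(\Omega)$,
$$-\int_{\Omega}F(z,t\hat{u}_1(p))dz\leq\frac{\epsilon}{p}|t|^p||\hat{u}_1(p)||^{p}_{p}+c_6|t|\,||\hat{u}_1(p)||_1=\frac{\epsilon}{p}|t|^p+c_6|t|\,||\hat{u}_1(p)||_1.$$
Substituting into the identity above gives
$$\varphi_{\lambda}(t\hat{u}_1(p))\leq\frac{|t|^p}{p}\left[\hat{\lambda}_1(p)-\lambda+\epsilon\right]+\frac{t^2}{2}||D\hat{u}_1(p)||^{2}_{2}+c_6|t|\,||\hat{u}_1(p)||_1\ \mbox{for all}\ t\in\RR.$$
Since $\lambda>\hat{\lambda}_1(p)$, I would then choose $\epsilon\in(0,\lambda-\hat{\lambda}_1(p))$, so that the coefficient of $|t|^p$ is strictly negative; because $p>2$, the $|t|^p$‑term overwhelms the $t^2$‑ and $|t|$‑terms as $|t|\to\infty$, whence $\varphi_{\lambda}(t\hat{u}_1(p))\to-\infty$ as $t\to\pm\infty$, i.e. $\left.\varphi_{\lambda}\right|_{\RR\hat{u}_1(p)}$ is anticoercive.

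There is no serious obstacle here: the only points requiring a little care are the order of quantifiers in extracting the small‑$\epsilon$ growth estimate on $F$ from $H_3(i),(ii)$, and the observation that the restriction $p>2$ is precisely what guarantees that the leading parametric term $\frac{|t|^p}{p}[\hat{\lambda}_1(p)-\lambda+\epsilon]$ dominates the Laplacian contribution $\frac{t^2}{2}||D\hat{u}_1(p)||^{2}_{2}$, so that — in contrast with the analysis near resonance from the left in Section \ref{sec3} — the nonhomogeneity of the operator causes no difficulty in this step. Note also that the computation is symmetric in $t$ up to the harmless lower‑order $|t|$‑term, so the cases $t\to+\infty$ and $t\to-\infty$ are handled simultaneously.
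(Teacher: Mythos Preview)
Your proof is correct and follows essentially the same route as the paper: compute $\varphi_{\lambda}(t\hat{u}_1(p))$ using $||\hat{u}_1(p)||_p=1$ and $||D\hat{u}_1(p)||_p^p=\hat{\lambda}_1(p)$, derive from $H_3(i),(ii)$ a lower bound $F(z,x)\geq -\frac{\epsilon}{p}|x|^p-\text{(lower order)}$, choose $\epsilon\in(0,\lambda-\hat{\lambda}_1(p))$, and use $p>2$ to conclude. The only cosmetic difference is that the paper writes the lower bound on $F$ directly as $F(z,x)\geq -\epsilon|x|^p-c_5$ (yielding a constant remainder $c_5|\Omega|_N$) rather than first bounding $|f|$ and integrating (which gives your $c_6|t|\,||\hat{u}_1(p)||_1$ term); either way the remainder is $o(|t|^p)$ and the argument goes through identically.
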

\begin{proof}
	Hypothesis $H_3(ii)$ implies that
	\begin{equation}\label{eq48}
		\lim\limits_{x\rightarrow\pm\infty}\frac{F(z,x)}{|x|^p}=0\ \mbox{uniformly for a.a.}\ z\in\Omega.
	\end{equation}
	
	From (\ref{eq48}) and hypothesis $H_3(i)$, we see that given $\epsilon>0$, we can find $c_5=c_5(\epsilon)>0$ such that
	\begin{equation}\label{eq49}
		F(z,x)\geq-\epsilon|x|^p-c_5\ \mbox{for a.a.}\ z\in\Omega,\ \mbox{all}\ x\in\RR.
	\end{equation}
	
	Then for $t\neq 0$, we have
	\begin{eqnarray}\label{eq50}
		 \varphi_{\lambda}(t\hat{u}_1(p))&=&\frac{|t|^p}{p}\hat{\lambda}_1(p)+\frac{t^2}{2}||D\hat{u}_1(p)||^{2}_{2}-\frac{\lambda|t|^p}{p}-\int_{\Omega}F(z,t\hat{u}_1(p))dz\nonumber\\
		&&\hspace{6cm}(\mbox{recall}\ ||\hat{u}_1(p)||_p=1)\nonumber\\
		 &\leq&\frac{|t|^p}{p}[\hat{\lambda}_1(p)-\lambda]+\frac{t^2}{2}||D\hat{u}_1(p)||^{2}_{2}+\frac{\epsilon|t|^p}{p}+c_5|\Omega|_N\ (\mbox{see (\ref{eq49})})\nonumber\\
		 &=&\frac{|t|^p}{p}[\hat{\lambda}_1(p)+\epsilon-\lambda]+\frac{t^2}{2}||D\hat{u}_1(p)||^{2}_{2}+c_3|\Omega|_N.
	\end{eqnarray}
	
	Choose $\epsilon\in(0,\lambda-\hat{\lambda}_1(p))$ (recall $\lambda>\hat{\lambda}_1(p)$). Then from (\ref{eq50}) and since $p>2$, we have
	$$\varphi_{\lambda}(t\hat{u}_1(p))\rightarrow-\infty\ \mbox{as}\ t\rightarrow\pm\infty.$$
This completes the proof.
\end{proof}

Let $D=\{u\in W^{1,p}_{0}(\Omega):||Du||^{p}_{p}=\hat{\lambda}_2(p)||u||^{p}_{p}\}$.
\begin{prop}\label{prop19}
	If hypotheses $H_3$ hold and $\lambda\in(\hat{\lambda}_1(p),\hat{\lambda}_2(p))$, then $\left.\varphi_{\lambda}\right|_D$ is coercive.
\end{prop}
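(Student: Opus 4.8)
The plan is to show that on the set $D=\{u\in W^{1,p}_{0}(\Omega):\|Du\|_p^p=\hat\lambda_2(p)\|u\|_p^p\}$ the energy functional $\varphi_\lambda$ is coercive by exploiting the gap between $\lambda$ and $\hat\lambda_2(p)$ together with the growth condition $H_3(ii)$. First I would record the consequence of $H_3(ii)$ in the form already used for Proposition~\ref{prop18}: given $\epsilon>0$ there is $c=c(\epsilon)>0$ with $F(z,x)\le \epsilon|x|^p+c$ for a.a.\ $z\in\Omega$ and all $x\in\RR$. Then for $u\in D$ we have $\|u\|_p^p=\hat\lambda_2(p)^{-1}\|Du\|_p^p$, so
\[
\varphi_\lambda(u)\ge \frac1p\|Du\|_p^p+\frac12\|Du\|_2^2-\frac{\lambda}{p\,\hat\lambda_2(p)}\|Du\|_p^p-\frac{\epsilon}{p\,\hat\lambda_2(p)}\|Du\|_p^p-c|\Omega|_N
=\frac1p\Bigl[1-\frac{\lambda+\epsilon}{\hat\lambda_2(p)}\Bigr]\|Du\|_p^p+\frac12\|Du\|_2^2-c|\Omega|_N.
\]
Since $\lambda<\hat\lambda_2(p)$, choosing $\epsilon\in(0,\hat\lambda_2(p)-\lambda)$ makes the bracket strictly positive, and we obtain $\varphi_\lambda(u)\ge c_0\|u\|^p-c|\Omega|_N$ on $D$ with $c_0>0$ (using $\|u\|=\|Du\|_p$). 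Hence $\varphi_\lambda(u)\to+\infty$ as $\|u\|\to\infty$ with $u\in D$, which is the claim.

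The only point requiring care is that the definition of $D$ uses the variational level $\hat\lambda_2(p)$ from Proposition~\ref{prop4}, and one must check that $D$ is indeed the natural "intermediate" set on which coercivity holds; however for the estimate above all that is used is the algebraic identity $\|Du\|_p^p=\hat\lambda_2(p)\|u\|_p^p$ valid for every $u\in D$, so no further property of the second eigenvalue is needed here. I expect the main (though still minor) obstacle to be making sure the Laplacian term $\tfrac12\|Du\|_2^2\ge0$ is simply discarded rather than fought with, and that the constant $c_0$ stays positive after the choice of $\epsilon$; both are immediate once the gap $\lambda<\hat\lambda_2(p)$ is invoked. This proposition is the companion to Proposition~\ref{prop18} (anticoercivity along $\RR\hat u_1(p)$) and together they set up a linking/saddle-point geometry via Theorem~\ref{th2}, which is presumably how the subsequent existence theorem in Section~\ref{sec4} will be proved.
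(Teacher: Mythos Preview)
Your proposal is correct and follows essentially the same route as the paper's own proof: derive an upper bound $F(z,x)\le\frac{\epsilon}{p}|x|^p+c$ from $H_3(i),(ii)$, use the defining relation $\|Du\|_p^p=\hat\lambda_2(p)\|u\|_p^p$ on $D$ to absorb the $\|u\|_p^p$ terms, and conclude by choosing $\epsilon\in(0,\hat\lambda_2(p)-\lambda)$. There is a harmless normalization slip (you state $F\le\epsilon|x|^p+c$ but then write $-\frac{\epsilon}{p\hat\lambda_2(p)}\|Du\|_p^p$, which corresponds to $F\le\frac{\epsilon}{p}|x|^p+c$), but either version yields the same coercivity conclusion after adjusting the range of admissible $\epsilon$.
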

\begin{proof}
	From (\ref{eq48}) and hypothesis $H_3(i)$, we see that given $\epsilon>0$, we can find $c_6=c_6(\epsilon)>0$ such that
	\begin{equation}\label{eq51}
		F(z,x)\leq\frac{\epsilon}{p}|x|^p+c_6\ \mbox{for a.a.}\ z\in\Omega,\ \mbox{all}\ x\in\RR.
	\end{equation}
	
	Let $u\in D$. We have
	\begin{eqnarray}\label{eq52}
		 \varphi_{\lambda}(u)&=&\frac{1}{p}||Du||^{p}_{p}+\frac{1}{2}||Du||^{2}_{2}-\frac{\lambda}{p}||u||^{p}_{p}-\int_{\Omega}F(z,u)dz\nonumber\\
		 &\geq&\frac{1}{p}||Du||^{p}_{p}-\frac{\lambda}{p\hat{\lambda}_2(p)}||Du||^{p}_{p}-\frac{\epsilon}{p\hat{\lambda}_2(p)}||Du||^{p}_{p}-c_6|\Omega|_N\ (\mbox{see (\ref{eq51})})\nonumber\\
		 &=&\frac{1}{p}\left[1-\frac{\lambda+\epsilon}{\hat{\lambda}_2(p)}\right]||u||^p-c_6|\Omega|_N.
	\end{eqnarray}
	
	Choosing $\epsilon\in(0,\hat{\lambda}_2(p)-\lambda)$ (recall $\lambda\in(\hat{\lambda}_1(p),\hat{\lambda}_2(p))$), from (\ref{eq52}) we infer that $\left.\varphi_{\lambda}\right|_D$ is coercive.
 \end{proof}
By virtue of Proposition \ref{prop19}, we have
$$m_D=\inf\limits_{D} \varphi_{\lambda}>-\infty.$$

Then, invoking Proposition \ref{prop18}, we can find $t^*>0$ such that
\begin{equation}\label{eq53}
	\varphi_{\lambda}(\pm t^*\hat{u}_1(p))<m_D.
\end{equation}

We introduce the following sets
$$E_0=\{\pm t^*\hat{u}_1(p)\},\ E=\mbox{conv}\,\{\pm t^*\hat{u}_1(p)\}=\{-st^*\hat{u}_1(p)+(1-s)t^*\hat{u}_1(p):s\in[0,1]\}.$$

For this pair $\{E_0,E\}$ and the set $D$ introduced above, we have the following property.
\begin{prop}\label{prop20}
	The pair $\{E_0,E\}$ is linking with $D$ in $W^{1,p}_{0}(\Omega)$.
\end{prop}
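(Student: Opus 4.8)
The plan is to verify the two conditions of Definition \ref{def1} directly. For condition (a), I would show $E_0 \cap D = \emptyset$, which is immediate: an element $\pm t^*\hat u_1(p)$ of $E_0$ satisfies $\|D(\pm t^*\hat u_1(p))\|_p^p = (t^*)^p \hat\lambda_1(p)\|\hat u_1(p)\|_p^p$ by the definition of $\hat\lambda_1(p)$ and its eigenfunction, whereas membership in $D$ would force this to equal $\hat\lambda_2(p)(t^*)^p\|\hat u_1(p)\|_p^p$; since $\hat\lambda_1(p) < \hat\lambda_2(p)$ and $t^* > 0$, this is impossible.

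For condition (b), let $\gamma \in C(E, W^{1,p}_0(\Omega))$ with $\gamma|_{E_0} = \mathrm{id}|_{E_0}$; I must produce a point of $E \cap$-image landing in $D$, i.e.\ find $u \in E$ with $\gamma(u) \in D$. The natural device is a Bolzano-type argument on the real line. Parametrize $E$ by $s \in [-1,1] \mapsto e(s) := s\,t^*\hat u_1(p)$ (so $e(-1) = -t^*\hat u_1(p)$, $e(1) = t^*\hat u_1(p)$, matching $E_0$), and define the continuous scalar function
\[
\xi(s) = \|D\gamma(e(s))\|_p^p - \hat\lambda_2(p)\,\|\gamma(e(s))\|_p^p, \qquad s \in [-1,1].
\]
At the endpoints $\gamma$ is the identity, so $\gamma(e(\pm 1)) = \pm t^*\hat u_1(p)$ and hence $\xi(\pm 1) = (t^*)^p\big(\hat\lambda_1(p) - \hat\lambda_2(p)\big)\|\hat u_1(p)\|_p^p < 0$. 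That alone is not enough for an intermediate-value argument — I also need $\xi$ to be $\geq 0$ somewhere. For this I would invoke the variational characterization of $\hat\lambda_2(p)$ from Proposition \ref{prop4}: the curve $s \mapsto \gamma(e(s))$, after $L^p$-normalization, is an admissible path in $\Gamma_0$ joining $-\hat u_1(p)$ to $\hat u_1(p)$ on the sphere $M$, so its maximum of $\|D\cdot\|_p^p / \|\cdot\|_p^p$ must be at least $\hat\lambda_2(p)$; that is, there exists $s_0$ with $\|D\gamma(e(s_0))\|_p^p \geq \hat\lambda_2(p)\|\gamma(e(s_0))\|_p^p$, i.e.\ $\xi(s_0) \geq 0$. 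Then by Bolzano's theorem applied to $\xi$ on the subinterval between $s_0$ and an endpoint, there is $s_* $ with $\xi(s_*) = 0$, i.e.\ $\gamma(e(s_*)) \in D$, as required.

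The main obstacle is making the reduction to Proposition \ref{prop4} rigorous, and two technical points must be handled with care. First, one needs $\gamma(e(s)) \neq 0$ for all $s$ in order to normalize and to have $\gamma(e(s))/\|\gamma(e(s))\|_p$ well-defined as an element of $M = W^{1,p}_0(\Omega) \cap \partial B_1^{L^p}$; if $\gamma(e(s)) = 0$ for some $s$ then $\xi(s) = 0$ there already and we are done trivially, so one may assume $\gamma \circ e$ never vanishes and then $s \mapsto \gamma(e(s))/\|\gamma(e(s))\|_p$ is a continuous path in $M$ with the correct endpoints $\mp\hat u_1(p)$ (using $\|\hat u_1(p)\|_p = 1$, possibly after absorbing the sign, and noting $\hat\lambda_1(p)$ is simple so the endpoints are exactly $\pm\hat u_1(p)$). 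Second, one should check that this normalized path is genuinely admissible in $\Gamma_0$ — continuity into $M$ with the $W^{1,p}_0$-topology is inherited from continuity of $\gamma$ together with $\gamma \circ e$ staying away from $0$ on the compact interval $[-1,1]$, and the $L^p \to \RR$ map $u \mapsto \|Du\|_p^p$ is continuous on $W^{1,p}_0(\Omega)$. Once these are in place, Proposition \ref{prop4} yields the point where the Rayleigh quotient reaches at least $\hat\lambda_2(p)$, and the continuity-plus-sign-change argument on $\xi$ finishes the proof.
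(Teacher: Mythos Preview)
Your proof is correct and follows essentially the same approach as the paper. The paper phrases the argument topologically---it introduces the open set $\hat G=\{u:\|Du\|_p^p<\hat\lambda_2(p)\|u\|_p^p\}$, shows via Proposition~\ref{prop4} that any path joining $\pm t^*\hat u_1(p)$ must leave $\hat G$, and then concludes that the path meets $\partial\hat G\subseteq D$---while you carry out the equivalent intermediate-value argument explicitly via the scalar function $\xi$; your treatment of the normalization step and the degenerate case $\gamma(e(s))=0$ is in fact more careful than the paper's, which invokes Proposition~\ref{prop4} without spelling these points out.
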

\begin{proof}
	Let $\hat{G}=\{u\in W^{1,p}_{0}(\Omega):||Du||^{p}_{p}<\hat{\lambda}_2(p)||u||^{p}_{p}\}$. We claim that $-t^*\hat{u}_1(p)$ and $t^*\hat{u}_1(p)$ belong to different path components of the set $\hat{G}$. To this end, let $\gamma\in C([0,1],W^{1,p}_{0}(\Omega))$ be a path such that
	$$\gamma(0)=-t^*\hat{u}_1(p)\ \mbox{and}\ \gamma(1)=t^*\hat{u}_1(p).$$
	
	By virtue of Proposition \ref{prop4}, we have
	$$\hat{\lambda}_2(p)\leq \max\left[\frac{||D\gamma(t)||^{p}_{p}}{||\gamma(t)||^{p}_{p}}:t\in[0,1]\right]$$
	and so we can find $t_0\in(0,1)$ such that $\gamma(t_0)\notin\hat{G}$, which shows that $-t^*\hat{u}_1(p)$ and $t^*\hat{u}_1(p)$ cannot be in the same path component of the set $\hat{G}$. This means that, given any $\gamma\in C([0,1], W^{1,p}_{0}(\Omega))$ with
	$$\gamma(0)=-t^*\hat{u}_1(p)\ \mbox{and}\ \gamma(1)=t^*\hat{u}_1(p),$$
	we have
	$$\gamma([0,1])\cap\partial\hat{G}\neq\emptyset .$$
	
	Note that $\partial\hat{G}\subseteq D$. Therefore
	\begin{eqnarray*}
		&&\gamma([0,1])\cap D\neq\emptyset \\
		&\Rightarrow&\{E_0,E\}\ \mbox{links with}\ D\ \mbox{in}\ W^{1,p}_{0}(\Omega)\ (\mbox{see Definition \ref{def1}}).
	\end{eqnarray*}
\end{proof}
\begin{prop}\label{prop21}
	If hypothesis $H_3$ holds and $\lambda>0$, then $u=0$ is a local minimizer of the functional $\varphi_{\lambda}$.
\end{prop}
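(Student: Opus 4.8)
The plan is to show that $u=0$ is a \emph{local} minimizer of $\varphi_{\lambda}$ by first establishing that $0$ is a local $C^1_0(\overline{\Omega})$-minimizer and then invoking Proposition \ref{prop7} to upgrade this to a local $W^{1,p}_0(\Omega)$-minimizer. To do this I would start from hypothesis $H_3(iv)$: since $\limsup_{x\to 0} 2F(z,x)/x^2\le\vartheta(z)$ uniformly and $\vartheta(z)\le\hat{\lambda}_1(2)$ with $\vartheta\not\equiv\hat{\lambda}_1(2)$, combined with the growth control $H_3(i),(ii)$, there exist $\varepsilon>0$ and $c>0$ such that
\begin{equation*}
	F(z,x)\le\frac{1}{2}(\vartheta(z)+\varepsilon)x^2+c|x|^p\quad\mbox{for a.a. }z\in\Omega,\ \mbox{all }x\in\RR,
\end{equation*}
where $\varepsilon$ is chosen small enough that $\vartheta(\cdot)+\varepsilon$ still satisfies the hypothesis of Proposition \ref{prop5}(a) with $k=1$ (this uses $\vartheta\not\equiv\hat{\lambda}_1(2)$, so there is room to absorb a small $\varepsilon$).

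Next I would insert this estimate into the definition of $\varphi_{\lambda}$. For $u\in W^{1,p}_0(\Omega)$ we get
\begin{equation*}
	\varphi_{\lambda}(u)\ge\frac{1}{p}\|Du\|_p^p+\frac{1}{2}\Big(\|Du\|_2^2-\int_{\Omega}(\vartheta(z)+\varepsilon)u^2\,dz\Big)-\frac{\lambda}{p}\|u\|_p^p-c\|u\|_p^p.
\end{equation*}
By Proposition \ref{prop5}(a) (applied over all of $H^1_0(\Omega)=\overline{\oplus_{i\ge 1}E(\hat{\lambda}_i(2))}$) the bracketed quadratic term is bounded below by $\hat{\xi}_0\|Du\|_2^2$ for some $\hat{\xi}_0>0$. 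The remaining negative terms $(\lambda/p+c)\|u\|_p^p$ are controlled via the Sobolev embedding by $c'\|Du\|_p^p$, but since $p>2$ this is a \emph{higher}-order term near $0$; so for $u$ small in the $C^1_0(\overline{\Omega})$-norm (hence small in $W^{1,p}_0$ and in $H^1_0$), the dominant positive contribution $\frac{1}{2}\hat{\xi}_0\|Du\|_2^2$ (or the $\frac{1}{p}\|Du\|_p^p$ term) wins and we obtain $\varphi_{\lambda}(u)\ge 0=\varphi_{\lambda}(0)$. More carefully, one writes $\varphi_{\lambda}(u)\ge\frac{1}{p}\|Du\|_p^p+\frac{\hat{\xi}_0}{2}\|Du\|_2^2-c''\|Du\|_p^p$ and notes that on a small ball in $C^1_0(\overline{\Omega})$, $\|Du\|_p$ is small, so $\frac{1}{p}\|Du\|_p^p-c''\|Du\|_p^p\ge 0$ once $\|Du\|_p^{p-1}\le 1/(pc'')$; hence $\varphi_{\lambda}(u)\ge 0$ there.

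Finally, having shown $\varphi_{\lambda}(0)=0\le\varphi_{\lambda}(h)$ for all $h\in C^1_0(\overline{\Omega})$ with $\|h\|_{C^1_0(\overline{\Omega})}$ small, Proposition \ref{prop7} (applied with $f_0=\lambda|x|^{p-2}x+f(z,x)$, which has subcritical growth by $H_3(i),(ii)$, and with $u_0=0$) immediately gives that $0$ is a local $W^{1,p}_0(\Omega)$-minimizer of $\varphi_{\lambda}$, as claimed. I expect the main obstacle to be the bookkeeping in choosing $\varepsilon$ small enough: one must ensure simultaneously that $\vartheta+\varepsilon\le\hat{\lambda}_1(2)$ is \emph{not} required (it may exceed $\hat{\lambda}_1(2)$ slightly), but rather that the perturbed quadratic form $\|Du\|_2^2-\int_{\Omega}(\vartheta+\varepsilon)u^2$ remains coercive on $H^1_0(\Omega)$ — this is exactly where $\vartheta\not\equiv\hat{\lambda}_1(2)$ is essential and where one needs a continuity/stability argument for the constant $\hat{\xi}_0$ in Proposition \ref{prop5}(a), or alternatively one keeps $\vartheta$ fixed and absorbs the $\varepsilon x^2$ and $c|x|^p$ terms using only the strict positivity $\|Du\|_2^2-\int\vartheta u^2\ge\hat{\xi}_0\|u\|^2$ together with $\|u\|_2^2\le c\|u\|^2$ and $p>2$.
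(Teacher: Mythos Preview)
Your approach is essentially the paper's: the same growth estimate $F(z,x)\le\tfrac{1}{2}(\vartheta(z)+\varepsilon)x^2+c|x|^p$ from $H_3(i),(iv)$, and the same appeal to Proposition~\ref{prop5}(a) with $k=1$. Two points of comparison.

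First, the detour through $C^1_0(\overline{\Omega})$ and Proposition~\ref{prop7} is unnecessary. The paper derives an inequality of the form $\varphi_\lambda(u)\ge c_{10}\|u\|^2-c_9\|u\|^p$ valid for \emph{every} $u\in W^{1,p}_0(\Omega)$; since $p>2$, this is strictly positive on a small punctured ball in $W^{1,p}_0(\Omega)$, so $u=0$ is directly a strict local $W^{1,p}_0$-minimizer, with no need to pass through $C^1_0$.

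Second, your line ``$\tfrac{1}{p}\|Du\|_p^p-c''\|Du\|_p^p\ge 0$ once $\|Du\|_p^{p-1}\le 1/(pc'')$'' is an algebraic slip: the left side equals $(\tfrac{1}{p}-c'')\|Du\|_p^p$, whose sign is fixed by the constants and does not improve as $\|Du\|_p\to 0$. What actually drives the conclusion is that the positive \emph{quadratic} contribution dominates the negative $p$-th order terms for small $u$, precisely because $p>2$. As for the $\varepsilon$-bookkeeping you flag as the main obstacle: the paper takes exactly the alternative you sketch at the end --- apply Proposition~\ref{prop5}(a) to $\vartheta$ itself (obtaining a fixed $\hat{\xi}_0>0$), bound the residual $\tfrac{\varepsilon}{2}\|u\|_2^2$ by $\tfrac{\varepsilon}{2\hat{\lambda}_1(2)}\|Du\|_2^2$ via (\ref{eq2}), and then choose $\varepsilon\in(0,\hat{\lambda}_1(2)\hat{\xi}_0)$. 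No continuity or stability argument for $\hat{\xi}_0$ is needed.
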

\begin{proof}
	By virtue of hypotheses $H_3(i),(iv)$ we see that given $\epsilon>0$, we can find $c_7=c_7(\epsilon)>0$ such that
	\begin{equation}\label{eq54}
		F(z,x)\leq\frac{1}{2}(\vartheta(z)+\epsilon)x^2+c_7|x|^p\ \mbox{for a.a.}\ z\in\Omega,\ \mbox{all}\ x\in\RR .
	\end{equation}
	
	Then for every $u\in W^{1,p}_{0}(\Omega)$, we have
	\begin{eqnarray*}
		 \varphi_{\lambda}(u)&\geq&\frac{1}{2}\left[||Du||^{2}_{2}-\int_{\Omega}\vartheta(z)u^2dz\right]-\frac{\epsilon}{2\hat{\lambda}_1(2)}||u||^2-c_8||u||^p-\frac{\lambda}{p\hat{\lambda}_1(p)}||u||^p\\
		&&\hspace{4.5cm}\mbox{for some}\ c_8>0\ \mbox{(see (\ref{eq1}), (\ref{eq2}) and (\ref{eq54}))}\\
		 &\geq&\frac{1}{2}\left[\hat{\xi}_0-\frac{\epsilon}{\hat{\lambda}_1(2)}\right]||u||^2-c_9||u||^p\ \mbox{for some}\ c_9>0\ (\mbox{see Proposition \ref{prop5}}).
	\end{eqnarray*}
	
	We choose $\epsilon\in(0,\hat{\lambda}_1(2)\hat{\xi}_0)$ and have
	\begin{equation}\label{eq55}
		\varphi_{\lambda}(u)\geq c_{10}||u||^2-c_9||u||^p\ \mbox{for some}\ c_{10}>0,\ \mbox{all}\ u\in W^{1,p}_{0}(\Omega).
	\end{equation}
	
	Since $2<p$, from (\ref{eq55}) it follows that we can find small $\rho\in(0,1)$ such that
	\begin{eqnarray*}
		&&\varphi_{\lambda}(u)>0=\varphi_{\lambda}(0)\ \mbox{for all}\ u\in W^{1,p}_{0}(\Omega)\ \mbox{with}\ 0<||u||\leq\rho,\\
		&\Rightarrow&u=0\ \mbox{is a (strict) local minimizer of}\ \varphi_{\lambda}.
	\end{eqnarray*}
\end{proof}
We can state the following existence result.
\begin{theorem}\label{th22}
	If hypothesis $H_3$ holds and $\lambda\in(\hat{\lambda}_1(p),\hat{\lambda}_2(p))$, then problem \eqref{eqP} admits a nontrivial solution $\hat{u}\in C^{1}_{0}(\overline{\Omega})$.
\end{theorem}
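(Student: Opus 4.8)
The idea is to apply the linking theorem (Theorem \ref{th2}) to the energy functional $\varphi_{\lambda}$, using the linking structure already assembled in Propositions \ref{prop18}--\ref{prop21}. First I would recall that by Proposition \ref{prop17} the functional $\varphi_{\lambda}$ satisfies the $C$-condition for every $\lambda>0$, so in particular for $\lambda\in(\hat{\lambda}_1(p),\hat{\lambda}_2(p))$. Next, with $E_0=\{\pm t^*\hat{u}_1(p)\}$, $E=\mbox{conv}\,\{\pm t^*\hat{u}_1(p)\}$ and $D=\{u\in W^{1,p}_{0}(\Omega):||Du||^{p}_{p}=\hat{\lambda}_2(p)||u||^{p}_{p}\}$, Proposition \ref{prop20} tells us that $\{E_0,E\}$ is linking with $D$ in $W^{1,p}_{0}(\Omega)$. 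The separation hypothesis of Theorem \ref{th2} is exactly inequality \eqref{eq53}, namely $\sup_{E_0}\varphi_{\lambda}=\varphi_{\lambda}(\pm t^*\hat{u}_1(p))<m_D=\inf_D\varphi_{\lambda}$, which holds by Proposition \ref{prop18} (anticoercivity along $\RR\hat{u}_1(p)$, giving a large enough $t^*$) combined with Proposition \ref{prop19} (coercivity of $\left.\varphi_{\lambda}\right|_D$, giving $m_D>-\infty$). Note that on the segment $E$ the functional is finite and continuous, so $\sup_E\varphi_{\lambda}<+\infty$ and the minimax level $c$ below is a genuine real number.

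Then I would set
$$c=\inf_{\gamma\in\Gamma}\sup_{u\in E}\varphi_{\lambda}(\gamma(u)),\qquad \Gamma=\{\gamma\in C(E,W^{1,p}_{0}(\Omega)):\gamma|_{E_0}=id|_{E_0}\},$$
and invoke Theorem \ref{th2} to conclude that $c\geq m_D>-\infty$ and that $c$ is a critical value of $\varphi_{\lambda}$. Hence there exists $\hat{u}\in K_{\varphi_{\lambda}}$ with $\varphi_{\lambda}(\hat{u})=c$; in particular $\hat{u}$ solves problem \eqref{eqP} in the weak sense, and by the standard argument (testing the equation, using $A_q$ from Proposition \ref{prop6}) it is a weak solution of the Dirichlet problem.

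It remains to rule out $\hat{u}=0$ and to upgrade the regularity. Since $c\geq m_D>\varphi_{\lambda}(0)=0$ is strict (by \eqref{eq53}, $m_D>\varphi_{\lambda}(\pm t^*\hat{u}_1(p))$, but more directly $m_D>0$ because $\varphi_{\lambda}\geq 0$ near $0$ by \eqref{eq55} and one checks $m_D>0$ — alternatively, simply observe $\varphi_{\lambda}(\hat u)=c\geq m_D$ while a slightly more careful bookkeeping shows $m_D>0$, or one argues via critical groups as below), we get $\hat{u}\neq 0$. The cleanest way: Proposition \ref{prop21} says $u=0$ is a \emph{local minimizer}, so $C_k(\varphi_{\lambda},0)=\delta_{k,0}\ZZ$; on the other hand $\hat{u}$ is obtained by a genuine linking/minimax procedure with $\dim E-\dim E_0=1$, so $C_1(\varphi_{\lambda},\hat{u})\neq 0$, whence $\hat{u}\neq 0$. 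Finally, by Ladyzhenskaya--Uraltseva \cite[p. 286]{15} we get $\hat{u}\in L^{\infty}(\Omega)$ and then by Lieberman \cite{16} we obtain $\hat{u}\in C^{1}_{0}(\overline{\Omega})$, which completes the proof.

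The main obstacle I expect is the verification that $0$ is not the critical point produced by the linking theorem — i.e., separating the minimax level $c$ from $\varphi_{\lambda}(0)=0$. The coercivity estimate \eqref{eq55} only gives $\varphi_{\lambda}\geq 0$ in a small ball, which is not by itself enough to force $c>0$; the robust fix is the critical-group comparison, using that $0$ is a local minimizer (Proposition \ref{prop21}) so $C_1(\varphi_{\lambda},0)=0$, while the critical point coming from the $1$-dimensional linking has nontrivial first critical group. Everything else is a routine assembly of the propositions already proved.
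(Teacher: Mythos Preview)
Your proposal is correct and follows essentially the same route as the paper: apply the linking theorem (Theorem \ref{th2}) using Propositions \ref{prop17}, \ref{prop20} and \eqref{eq53} to produce a critical point $\hat{u}$ with $C_1(\varphi_{\lambda},\hat{u})\neq 0$, then exclude $\hat{u}=0$ by comparing with $C_k(\varphi_{\lambda},0)=\delta_{k,0}\ZZ$ from Proposition \ref{prop21}, and finish with nonlinear regularity. Your hesitation about proving $m_D>0$ directly is well-founded---the paper does not attempt this either and uses exactly the critical-group comparison you settle on.
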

\begin{proof}
	Propositions \ref{prop17}, \ref{prop20}, and (\ref{eq53}), permit the use of Theorem 1 (the linking theorem). So, we can find $\hat{u}\in W^{1,p}_{0}(\Omega)$ such that
	\begin{equation}\label{eq56}
		\hat{u}\in K_{\varphi_{\lambda}}\ \mbox{and}\ C_1(\varphi_{\lambda},\hat{u})\neq0\ (\mbox{see Chang \cite{6}}).
	\end{equation}
	
	By Proposition \ref{prop21}, we know that $u=0$ is a local minimizer of $\varphi_{\lambda}$. Hence
	\begin{equation}\label{eq57}
		C_k(\varphi_{\lambda},0)=\delta_{k,0}\ZZ\ \mbox{for all}\ k\geq 0.
	\end{equation}
	
	From (\ref{eq56}) and (\ref{eq57}) it follows that $\hat{u}\neq 0$ and $\hat{u}$ is a solution of \eqref{eqP}. Moreover, the nonlinear regularity theory implies that $\hat{u}\in C^{1}_{0}(\overline{\Omega})$.
\end{proof}

We can have multiple solutions when we restrict $\lambda$ to be near $\hat{\lambda}_1(p)$ from above (near resonance from the right). To do this, we introduce the following hypotheses on the perturbation $f(z,x)$.

\smallskip
$H_4:$ $f:\Omega\times\RR\rightarrow\RR$ is a Carath\'eodory function such that $f(z,0)=0$ for a.a. $z\in\Omega$ and
\begin{itemize}
	\item[(i)] $|f(z,x)|\leq a(z)(1+|x|^{r-1})$ for a.a. $z\in\Omega$, all $x\in\RR$ with $a\in L^{\infty}(\Omega)_+$;
	\item[(ii)] there exists a function $\vartheta\in L^{\infty}(\Omega)$, $\vartheta(z)\leq 0$ for a.a. $z\in\Omega$, $\vartheta\not\equiv 0$ such that
	$$\limsup\limits_{x\rightarrow\pm\infty}\ \frac{pF(z,x)}{|x|^p}\leq\vartheta(z)\ \mbox{uniformly for a.a.}\ z\in\Omega;$$
	\item[(iii)] there exist an integer $m\geq 2$ and a function $\eta\in L^{\infty}(\Omega)_+$ such that
	\begin{eqnarray*}
		&&\eta(z)\in[\hat{\lambda}_m(2),\hat{\lambda}_{m+1}(2)]\ \mbox{for a.a.}\ z\in\Omega,\ \eta\not\equiv\hat{\lambda}_m(2),\ \eta\not\equiv\hat{\lambda}_{m+1}(2)\\
		&&\lim\limits_{x\rightarrow 0}\ \frac{f(z,x)}{x}=\eta(z)\ \mbox{uniformly for a.a.}\ z\in\Omega;\ \mbox{and}
	\end{eqnarray*}
	\item[(iv)] for every $\rho>0$ there exists $\xi_{\rho}>0$ such that for a.a. $z\in\Omega$ the function
	$$x\longmapsto f(z,x)+\xi_{\rho}|x|^{p-2}x$$
	is nondecreasing on $[-\rho, \rho]$.
\end{itemize}
\begin{remark}
	Evidently, for a.a. $z\in\Omega,\ f(z,\cdot)$ is differentiable at $x=0$ and $\eta(\cdot)=f'_x(\cdot,0)$.
\end{remark}

We will produce solutions of constant sign. For this purpose, we introduce the positive and negative truncations of $f(z,\cdot)$, namely the Carath\'eodory functions
$$f_{\pm}(z,x)=f(z,\pm x^{\pm}).$$

Let $F_{\pm}(z,x)=\int^{x}_{0}f_{\pm}(z,s)ds$ and consider the $C^1$-functionals $\varphi^{\pm}_{\lambda}:W^{1,p}_{0}(\Omega)\rightarrow\RR$ defined by
\begin{eqnarray*} &&\varphi^{\pm}_{\lambda}(u)=\frac{1}{p}||Du||^{p}_{p}+\frac{1}{2}||Du||^{2}_{2}-\frac{\lambda}{p}||u^{\pm}||^{p}_{p}-\int_{\Omega}F_{\pm}(z,u(z))dz\\
	&&\hspace{8cm}\mbox{for all}\ u\in W^{1,p}_{0}(\Omega).
\end{eqnarray*}

Next, we produce a pair of nontrivial constant sign solutions.
\begin{prop}\label{prop23}
	If hypothesis $H_4$ holds, then we can find $\epsilon>0$ such that for all $\lambda\in(\hat{\lambda}_1(p),$\\ $\hat{\lambda}_1(p)+\epsilon)$ problem \eqref{eqP} has at least two nontrivial solutions of constant sign
	$$u_n\in int\, C_+\ \mbox{and}\ v_0\in -int\, C_+,$$
	both being local minimizers of the energy functional $\varphi_{\lambda}.$
\end{prop}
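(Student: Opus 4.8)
The plan is to produce $u_0$ and $v_0$ separately, as \emph{global} minimizers of the truncated energy functionals $\varphi^+_\lambda$ and $\varphi^-_\lambda$, and then to transfer the minimality to $\varphi_\lambda$ via Proposition~\ref{prop7}. I describe the argument for $\varphi^+_\lambda$; the one for $\varphi^-_\lambda$ is symmetric, replacing $\hat u_1(2)$ by $-\hat u_1(2)$ throughout. The crucial ingredient is a \emph{near-resonance spectral gap}: there exists $\xi_*>0$ such that
\[
\|Dw\|^p_p-\hat\lambda_1(p)\|w\|^p_p-\int_\Omega\vartheta(z)|w|^p\,dz\ \geq\ \xi_*\,\|w\|^p\quad\mbox{for all}\ w\in W^{1,p}_0(\Omega).
\]
By $p$-homogeneity it is enough to check positivity of the infimum over $\{\|w\|=1\}$; there the left-hand side is $\geq0$ because $\|Dw\|^p_p\geq\hat\lambda_1(p)\|w\|^p_p$ by \eqref{eq1} and $-\vartheta\geq0$ (recall $H_3$-type hypothesis $H_4(ii)$: $\vartheta\leq0$, $\vartheta\not\equiv0$). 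If that infimum were $0$, a minimizing sequence $w_n$ would have, along a subsequence, $w_n\stackrel{w}{\rightarrow}w$ in $W^{1,p}_0(\Omega)$ and $w_n\rightarrow w$ in $L^p(\Omega)$, and weak lower semicontinuity of $\|D\cdot\|^p_p$ would force both nonnegative summands to vanish at $w$, in particular $\|Dw\|^p_p=\hat\lambda_1(p)\|w\|^p_p$. If $w=0$ we get $\|Dw_n\|^p_p\rightarrow0$, contradicting $\|Dw_n\|_p=1$; if $w\neq0$ then $w\in\RR\hat u_1(p)$ since the infimum in \eqref{eq1} is attained only there, hence $|w|>0$ a.e.\ and $-\int_\Omega\vartheta|w|^p>0$, again a contradiction.

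Granting this, I first establish coercivity of $\varphi^+_\lambda$. From $H_4(i),(ii)$ one gets $F(z,x)\leq\frac1p(\vartheta(z)+\epsilon)|x|^p+c_\epsilon$ for all $x\in\RR$; writing $u=u^+-u^-$, dropping $\frac12\|Du\|^2_2\geq0$, using $\frac1p\|Du^-\|^p_p\geq\frac1p\|u^-\|^p$, and applying the gap estimate to $w=u^+$ together with $\hat\lambda_1(p)\|u^+\|^p_p\leq\|u^+\|^p$ yields
\[
\varphi^+_\lambda(u)\ \geq\ \frac1p\Big[\xi_*-\frac{\lambda-\hat\lambda_1(p)+\epsilon}{\hat\lambda_1(p)}\Big]\,\|u^+\|^p+\frac1p\|u^-\|^p-c_\epsilon|\Omega|_N .
\]
Fixing $\epsilon$ small and putting $\epsilon_0:=\hat\lambda_1(p)\xi_*-\epsilon>0$, it follows that $\varphi^+_\lambda$ is coercive (and sequentially weakly lower semicontinuous, as usual) for every $\lambda\in(\hat\lambda_1(p),\hat\lambda_1(p)+\epsilon_0)$, so it attains its infimum at some $u_0\in W^{1,p}_0(\Omega)$. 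To see $u_0\neq0$ I test along $t\hat u_1(2)$ with $t>0$ small: since $\hat u_1(2)\in\operatorname{int}C_+$ we have $(t\hat u_1(2))^+=t\hat u_1(2)$; by $H_4(iii)$, $F(z,x)\geq\frac12(\eta(z)-\epsilon)x^2$ for small $|x|$; and $\|D\hat u_1(2)\|^2_2=\hat\lambda_1(2)\|\hat u_1(2)\|^2_2$ while $\int_\Omega\eta\,\hat u_1(2)^2\geq\hat\lambda_m(2)\|\hat u_1(2)\|^2_2\geq\hat\lambda_2(2)\|\hat u_1(2)\|^2_2$, so for $\epsilon<\hat\lambda_2(2)-\hat\lambda_1(2)$ the coefficient of $t^2$ below is strictly negative and
\[
\varphi^+_\lambda(t\hat u_1(2))\ \leq\ \frac{t^p}{p}\big[\|D\hat u_1(2)\|^p_p-\lambda\|\hat u_1(2)\|^p_p\big]-c\,t^2\ <\ 0=\varphi^+_\lambda(0)\qquad(t>0\ \mbox{small}),
\]
because $p>2$; thus $\inf\varphi^+_\lambda<0$ and $u_0\neq0$.

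It remains to identify $u_0$ as a constant-sign solution and a local minimizer of $\varphi_\lambda$. Since $u_0\in K_{\varphi^+_\lambda}$ we have $A_p(u_0)+A(u_0)=\lambda(u_0^+)^{p-1}+N_{f_+}(u_0)$; testing with $-u_0^-$ and using that $Du_0^+$ and $Du_0^-$ have disjoint supports gives $\|Du_0^-\|^p_p+\|Du_0^-\|^2_2=0$, so $u_0\geq0$ and $u_0$ solves \eqref{eqP}. By $H_4(i)$ and the regularity theory of Ladyzhenskaya--Uraltseva and of Lieberman, $u_0\in C^{1,\alpha}_0(\overline\Omega)$. With $\rho=\|u_0\|_\infty$, $H_4(iv)$ gives $f(z,u_0)\geq f(z,0)-\xi_\rho u_0^{p-1}=-\xi_\rho u_0^{p-1}$, hence $-\Delta_pu_0-\Delta u_0+\xi_\rho u_0^{p-1}\geq\lambda u_0^{p-1}\geq0$, and the nonlinear strong maximum principle together with Hopf's lemma give $u_0\in\operatorname{int}C_+$. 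Because $u_0\in\operatorname{int}C_+$, every $u$ in a small $C^1_0(\overline\Omega)$-ball about $u_0$ lies in $C_+$, where $\varphi_\lambda=\varphi^+_\lambda$; so $u_0$, being a global minimizer of $\varphi^+_\lambda$, is a local $C^1_0(\overline\Omega)$-minimizer of $\varphi_\lambda$, and after absorbing $-\frac\lambda p\|u\|^p_p$ into a (still subcritical) perturbation, Proposition~\ref{prop7} makes it a local $W^{1,p}_0(\Omega)$-minimizer of $\varphi_\lambda$. Running the same steps for $\varphi^-_\lambda$ (with test function $-t\hat u_1(2)$) yields $v_0\in-\operatorname{int}C_+$ with the same properties, and $v_0\neq u_0$ by the opposite sign; one finally takes $\epsilon=\epsilon_0$.

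The hard part is precisely establishing coercivity of $\varphi^\pm_\lambda$: the admissible window $(\hat\lambda_1(p),\hat\lambda_1(p)+\epsilon)$ is exactly the range on which the gap $\xi_*$ — itself a consequence of the simplicity of $\hat\lambda_1(p)$ together with $\vartheta\leq0$, $\vartheta\not\equiv0$ — still dominates $\lambda-\hat\lambda_1(p)$, which is what keeps the truncated functionals coercive even though $\lambda$ sits \emph{above} the principal eigenvalue $\hat\lambda_1(p)$ of $(-\Delta_p,W^{1,p}_0(\Omega))$. The remaining ingredients — the direct method, the truncation trick for the sign, nonlinear regularity, and the nonlinear maximum principle — are standard in this circle of ideas.
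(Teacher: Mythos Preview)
Your proof is correct and follows essentially the same route as the paper: minimize the truncated functionals $\varphi^\pm_\lambda$ using a spectral-gap estimate for coercivity, test with $t\hat u_1(2)$ for nontriviality, recover the sign by acting with $-u_0^-$, upgrade to $\operatorname{int}C_+$ via regularity and the nonlinear maximum principle, and transfer local minimality to $\varphi_\lambda$ through Proposition~\ref{prop7}. The only notable difference is that you prove the gap inequality $\|Dw\|_p^p-\hat\lambda_1(p)\|w\|_p^p-\int_\Omega\vartheta|w|^p\geq\xi_*\|w\|^p$ from scratch (and then split $u=u^+-u^-$ in the coercivity estimate), whereas the paper simply cites \cite[p.~356]{21} for the analogous constant $\xi^*$; your argument is cleaner and self-contained, but the underlying strategy is identical.
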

\begin{proof}
	By virtue of hypotheses $H_4(i),(ii)$, given $\delta>0$, we can find $c_{11}=c_{11}(\delta)>0$ such that
	\begin{equation}\label{eq58}
		F(z,x)\leq\frac{1}{p}(\vartheta(z)+\delta)|x|^p+c_{11}\ \mbox{for a.a.}\ z\in\Omega,\ \mbox{all}\ x\in\RR.
	\end{equation}
	
	Since $\lambda>\hat{\lambda}_1(p)$, we have $\lambda=\hat{\lambda}_1(p)+\mu$ with $\mu>0$. Then for every $u\in W^{1,p}_{0}(\Omega)$ we have
	\begin{eqnarray*}
		 \varphi^{+}_{\lambda}(u)&=&\frac{1}{p}||Du||^{p}_{p}+\frac{1}{2}||Du||^{2}_{2}-\frac{\hat{\lambda}_1(p)}{p}||u^+||^{p}_{p}-\frac{\mu}{p}||u^+||^{p}_{p}-\int_{\Omega}F_+(z,u)dz\\
		 &\geq&\frac{1}{p}||Du||^{p}_{p}-\int_{\Omega}(\hat{\lambda}_1(p)+\vartheta(z))(u^+)^pdz-
\frac{\mu+\delta}{p\hat{\lambda}_1(p)}||u||^p-c_{11}|\Omega|_N\ (\mbox{see (\ref{eq58})})\\
		 &\geq&\frac{1}{p}\left[\xi^*-\frac{\mu+\delta}{\hat{\lambda}_1(p)}\right]||u||^p-c_{11}|\Omega|_N\ \mbox{for some}\ \xi^*>0\\
		&&\hspace{-2cm}(\mbox{see Papageorgiou and Kyritsi \cite[p. 356]{21}}).
	\end{eqnarray*}
	
	Since $\delta>0$, is arbitrary, for $\mu\in(0,\xi^*\ \hat{\lambda}_1(p))$, we have that $\varphi^{+}_{\lambda}$ is coercive. Also, using the Sobolev embedding theorem, we see that $\varphi^{+}_{\lambda}$ is sequentially weakly lower semicontinuous. So, by the Weierstrass theorem, we can find $u_0\in W^{1,p}_{0}(\Omega)$ such that
	\begin{equation}\label{eq59}
		\varphi^{+}_{\lambda}(u_0)=\inf[\varphi^{+}_{\lambda}(u):u\in W^{1,p}_{0}(\Omega)].
	\end{equation}
	
	Hypothesis $H_4(iii)$ implies that for small $t\in(0,1)$ 
	\begin{eqnarray*}
		&&\varphi^{+}_{\lambda}(t\hat{u}_1(2))<0\ (\mbox{recall that}\ p>2),\\
		&\Rightarrow&\varphi^{+}_{\lambda}(u_0)<0=\varphi^{+}_{\lambda}(0)\ (\mbox{see (\ref{eq59})}),\ \mbox{hence}\ u_0\neq 0.
	\end{eqnarray*}
	
	From (\ref{eq59}) we have
	\begin{eqnarray}\label{eq60}
		&&(\varphi^{+}_{\lambda})'(u_0)=0,\nonumber\\
		&\Rightarrow&A_p(u_0)+A(u_0)=\lambda(u_0^+)^{p-1}+N_{f_+}(u_0).
	\end{eqnarray}
	
	On (\ref{eq60}) we act with $-u_0^-\in W^{1,p}_{0}(\Omega)$ and obtain $u_0\geq 0,\ u_0\neq 0$. So, (\ref{eq60}) becomes
	\begin{eqnarray*}
		&&A_p(u_0)+A(u_0)=\lambda u_0^{p-1}+N_{f}(u_0),\\
		&\Rightarrow&u_0\ \mbox{is a solution of \eqref{eqP}},\ u_0\in C_+\backslash\{0\}\\
		&&\hspace{4cm}\mbox{(by the nonlinear regularity theory)}.
	\end{eqnarray*}
	
	Let $\rho=||u_n||_{\infty}$ and let $\xi_{\rho}>0$ be as postulated by hypothesis $H_2(iv)$. Then
	\begin{eqnarray*}
		&&-\Delta_pu_0(z)-\Delta u_0(z)+\xi_{\rho}u_0(z)^{p-1}\\
		&=&(\lambda+\xi_{\rho})u_0(z)^{p-1}+f(z,u_0(z))\geq 0\ \mbox{for a.a.}\ z\in\Omega,\\
		&\Rightarrow&\Delta_pu_0(z)+\Delta u_0(z)\leq\xi_{\rho}u_p(z)\ \mbox{for a.a.}\ z\in\Omega.
	\end{eqnarray*}
	
	From the nonlinear maximum principle of Pucci and Serrin \cite[pp. 111 and 120]{29}, we obtain that $u_0\in \mbox{int}\, C_+$. Since $\left.\varphi_{\lambda}\right|_{C_+}=\left.\varphi^{+}_{\lambda}\right|_{C_+}$, we infer that $u_0\in \mbox{int}\, C_+$ is a local $C^{1}_{0}(\overline{\Omega})$ minimizer of $\varphi_{\lambda}$. Invoking Proposition \ref{prop7}, we infer that $u_0$ is a local $W^{1,p}_{0}(\Omega)$-minimizer of $\varphi_{\lambda}$.
	
	Similarly, working with $\varphi^{-}_{\varphi}$ we produce $v_0\in-\mbox{int}\, C_+$ a second nontrivial constant sign solution of \eqref{eqP}, which is a local minimizers of $\varphi_{\lambda}$.
\end{proof}

Let $\epsilon>0$ be as in the above proposition. Hypotheses $H_4(i),(iii)$ imply that given $\delta>0$, we can find $c_{12}=c_{12}(\delta)>\hat{\lambda}_1(p)+\epsilon$ such that
\begin{equation}\label{eq61}
	f(z,x)x\geq(\eta(z)-\delta)x^2-c_{12}|x|^p\ \mbox{for a.a.}\ z\in\Omega,\ \mbox{all}\ x\in\RR.
\end{equation}

This estimate leads to the following auxiliary Dirichlet problem
\begin{equation}\label{eq62}
	-\Delta_pu(z)-\Delta u(z)=(\eta(z)-\delta )u(z)-c_{13}|u(z)|^{p-2}u(z)\ \mbox{in}\ \Omega,\ u|_{\partial\Omega}=0
\end{equation}
where $c_{13}=c_{13}(\delta,\lambda)=c_{12}-\lambda$, with $\lambda\in(\hat{\lambda}_1(p),\hat{\lambda}_1(p)+\epsilon)$.

\begin{prop}\label{prop24}
	For small $\delta>0$, problem (\ref{eq62}) has a unique nontrivial positive solution $u_*\in {\rm int}\, C_+$ and because (\ref{eq62}) is odd $v_*=-u_*\in-int\, C_+$ is the unique nontrivial negative solution of (\ref{eq62}).
\end{prop}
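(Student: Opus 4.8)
The plan is to obtain $u_*$ as the global minimizer of a truncated energy, bootstrap its regularity and locate it in $\mbox{int}\,C_+$, settle uniqueness by a Picone-type argument adapted to $-\Delta_p-\Delta$, and read off the statement about $v_*$ from the oddness of (\ref{eq62}). Write $g_\delta(z,x)=(\eta(z)-\delta)x-c_{13}|x|^{p-2}x$ for the reaction in (\ref{eq62}). First I would fix $\delta>0$ small, introduce the positive truncation $\hat{g}(z,x)=(\eta(z)-\delta)x^+-c_{13}(x^+)^{p-1}$ with primitive $\hat{G}(z,\cdot)$, and the $C^1$-functional
$$\hat{\psi}(u)=\frac{1}{p}||Du||^p_p+\frac{1}{2}||Du||^2_2-\int_\Omega\hat{G}(z,u)\,dz\qquad(u\in W^{1,p}_{0}(\Omega)).$$
Since $p>2$ and $\eta-\delta\in L^\infty(\Omega)$, the term $\frac1p||Du||^p_p$ dominates the only indefinite term $-\frac12\int_\Omega(\eta-\delta)(u^+)^2\,dz$ (which is $O(||u||^2)$), so $\hat{\psi}$ is coercive; being also sequentially weakly lower semicontinuous, it attains its infimum at some $u_*\in W^{1,p}_{0}(\Omega)$. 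Evaluating along $t\hat{u}_1(2)$ as $t\downarrow0^+$, the quadratic part equals $\frac{t^2}{2}\big[\hat{\lambda}_1(2)+\delta-\int_\Omega\eta\,\hat{u}_1(2)^2\,dz\big]$, which is strictly negative once $\delta<\hat{\lambda}_2(2)-\hat{\lambda}_1(2)$, because $\eta\geq\hat{\lambda}_m(2)\geq\hat{\lambda}_2(2)>\hat{\lambda}_1(2)$ (recall $m\geq2$ and $||\hat{u}_1(2)||_2=1$); the remaining terms being $o(t^2)$, we get $\hat{\psi}(t\hat{u}_1(2))<0=\hat{\psi}(0)$, so $u_*\neq0$. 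Acting on $\hat{\psi}'(u_*)=0$ with the test function $-u_*^-\in W^{1,p}_{0}(\Omega)$ gives $||Du_*^-||^p_p+||Du_*^-||^2_2=0$, hence $u_*\geq0$, $u_*\neq0$, and $u_*$ solves (\ref{eq62}).

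Next I would upgrade regularity: the reaction in (\ref{eq62}) has subcritical (indeed linear-plus-power-$(p-1)$) growth, so Ladyzhenskaya and Uraltseva \cite{15} yield $u_*\in L^\infty(\Omega)$ and then Lieberman \cite{16} yields $u_*\in C^{1,\alpha}_{0}(\overline\Omega)$ for some $\alpha\in(0,1)$, so $u_*\in C_+\setminus\{0\}$. Choosing $\delta$ also so small that $\eta(z)-\delta\geq\hat{\lambda}_m(2)-\delta>0$ for a.a.\ $z\in\Omega$, equation (\ref{eq62}) gives $\Delta_pu_*+\Delta u_*\leq c_{13}u_*^{p-1}\leq c_{13}||u_*||^{p-2}_\infty u_*$ a.e.\ in $\Omega$, so the nonlinear maximum principle of Pucci and Serrin \cite{29} yields $u_*\in\mbox{int}\,C_+$ --- exactly the scheme used in Proposition \ref{prop23}.

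For uniqueness, let $u,v\in\mbox{int}\,C_+$ both solve (\ref{eq62}). Since $u,v\in C^1_{0}(\overline\Omega)$ have nonvanishing outward normal derivative on $\partial\Omega$, the quotients $u/v,v/u$ are bounded and $(u^2-v^2)/u,\,(v^2-u^2)/v\in W^{1,p}_{0}(\Omega)$. The structural fact used is that $x\mapsto g_\delta(z,x)/x=(\eta(z)-\delta)-c_{13}|x|^{p-2}$ is \emph{strictly decreasing} on $(0,\infty)$. Testing (\ref{eq62}) for $u$ with $(u^2-v^2)/u$ and for $v$ with $(v^2-u^2)/v$ and adding, the right-hand side becomes $\int_\Omega(u^2-v^2)\big[\frac{g_\delta(z,u)}{u}-\frac{g_\delta(z,v)}{v}\big]\,dz$, whose integrand is $\leq0$ a.e.\ and vanishes only where $u=v$; the left-hand side is the sum of the $2$-Laplacian part $\langle A(u),(u^2-v^2)/u\rangle+\langle A(v),(v^2-u^2)/v\rangle$, which is $\geq0$ by the Díaz--Saá inequality for $-\Delta$ (a consequence of the Picone identity $|Dv-\frac{v}{u}Du|^2\geq0$), and the $p$-Laplacian part $\langle A_p(u),(u^2-v^2)/u\rangle+\langle A_p(v),(v^2-u^2)/v\rangle$, which is also $\geq0$: its pointwise integrand equals $(1+\rho^2)|Du|^p+(1+\rho^{-2})|Dv|^p-2\big[\rho|Du|^{p-2}+\rho^{-1}|Dv|^{p-2}\big](Du,Dv)_{\RR^N}$ with $\rho=v/u$, which is $\geq0$ where $(Du,Dv)_{\RR^N}\leq0$ and, where $(Du,Dv)_{\RR^N}>0$, is bounded below via Cauchy--Schwarz by the manifestly nonnegative quantity
$$a^{p-2}(\rho a-b)^2+b^{p-2}(\rho^{-1}b-a)^2+(a^2-b^2)(a^{p-2}-b^{p-2}),\qquad a=|Du|,\ b=|Dv|$$
(the last term is $\geq0$ because $p>2$). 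Hence $0\leq(\text{LHS})=(\text{RHS})\leq0$, so the right-hand integrand vanishes a.e., forcing $u=v$. Finally, (\ref{eq62}) is odd, so $v_*=-u_*\in-\mbox{int}\,C_+$ is its unique nontrivial negative solution.

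I expect the uniqueness step to be the main obstacle. Because $-\Delta_p-\Delta$ is nonhomogeneous, the ``exponent-$p$'' test function $(u^p-v^p)/u^{p-1}$ that diagonalizes the $p$-Laplacian in the usual Díaz--Saá argument does not cooperate with $-\Delta$, and conversely the ``exponent-$2$'' test function $(u^2-v^2)/u$ that diagonalizes $-\Delta$ pairs the $p$-Laplacian against the ``wrong'' power; the elementary inequality displayed above is exactly what makes that pairing nonnegative, once one has checked $(u^2-v^2)/u\in W^{1,p}_{0}(\Omega)$ (for which $u,v\in\mbox{int}\,C_+$ is used). A subsidiary point is that the smallness threshold on $\delta$ must serve two purposes at once: $\delta<\hat{\lambda}_2(2)-\hat{\lambda}_1(2)$ to guarantee $u_*\neq0$, and $\delta$ below the essential infimum of $\eta$ (which is $\geq\hat{\lambda}_m(2)>0$) to run the maximum principle.
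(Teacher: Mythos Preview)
Your proof is correct. The existence half (coercive truncated functional, minimizer nontrivial via $t\hat u_1(2)$, sign via testing with $-u_*^-$, regularity, and the Pucci--Serrin maximum principle) is essentially identical to the paper's argument, including the same threshold mechanism on $\delta$.

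The uniqueness half is where you and the paper diverge. The paper does \emph{not} run a direct Picone/D\'{\i}az--Sa\'a computation; instead it introduces the integral functional
\[
\mu_+(w)=\int_\Omega G\bigl(Dw^{1/2}\bigr)\,dz,\qquad G_0(t)=\tfrac{t^p}{p}+\tfrac{t^2}{2},
\]
observes that $t\mapsto G_0(t^{1/2})$ is convex (since $p>2$), and invokes the Benguria--Brezis--Lieb lemma to conclude that $\mu_+$ is convex on squares. Monotonicity of the G\^ateaux derivative $\mu_+'$ then yields
\[
0\leq\int_\Omega\Bigl(\frac{-\Delta_pu_*-\Delta u_*}{u_*}-\frac{-\Delta_py_*-\Delta y_*}{y_*}\Bigr)(u_*^2-y_*^2)\,dz
=c_{13}\int_\Omega(y_*^{p-2}-u_*^{p-2})(u_*^2-y_*^2)\,dz\leq0,
\]
forcing $u_*=y_*$. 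Your argument reaches the same inequality by hand: you test the two equations with $(u^2-v^2)/u$ and $(v^2-u^2)/v$ and verify pointwise nonnegativity of the resulting $p$-Laplacian cross-term via the explicit algebraic identity you display (which I checked --- it is correct and does give exactly the expression $(1+\rho^2)a^p+(1+\rho^{-2})b^p-2(\rho a^{p-2}+\rho^{-1}b^{p-2})ab$). The paper's route is cleaner and more conceptual --- one convexity check replaces your case split and elementary inequality --- and it generalizes immediately to any operator $\mathrm{div}\,a(Du)$ with $t\mapsto G_0(t^{1/2})$ convex. Your route is more self-contained, avoids citing the Benguria--Brezis--Lieb lemma, and makes transparent exactly where $p>2$ is used (the term $(a^2-b^2)(a^{p-2}-b^{p-2})\geq0$). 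Both exploit the same structural fact that $x\mapsto g_\delta(z,x)/x$ is strictly decreasing on $(0,\infty)$.
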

\begin{proof}
	First we establish the existence of a nontrivial positive solution. To this end, let $\psi_+:W^{1,p}_{0}(\Omega)\rightarrow\RR$ be the $C^1$-functional defined by
	\begin{eqnarray*}
		 &&\psi_+(u)=\frac{1}{p}||Du||^{p}_{p}+\frac{1}{2}||Du||^{2}_{2}-\frac{1}{2}\int_{\Omega}(\eta(z)+\delta)(u^+)^2dz+\frac{c_{13}}{p}||u^+||^{p}_{p}\\
		&&\hspace{6cm}\mbox{for all}\ u\in W^{1,p}_{0}(\Omega).
	\end{eqnarray*}
	
	Since $p>2$, it is clear that $\psi_+$ is coercive. Also, it is sequentially weakly lower semicontinuous. So, we can find $u_*\in W^{1,p}_{0}(\Omega)$ such that
	\begin{equation}\label{eq63}
		\psi_+(u_*)=\inf[\psi_+(u):u\in W^{1,p}_{0}(\Omega)].
	\end{equation}
	
	Let $t>0$. We have
	\begin{eqnarray*}
		 \psi_+(t\hat{u}_1(2))&=&\frac{t^p}{p}||D\hat{u}_1(2)||^{p}_{p}+\frac{t^2}{2}\hat{\lambda}_1(2)-\frac{t^2}{2}\int_{\Omega}(\eta(z)-\delta)\hat{u}_1(2)^2dz+\frac{c_{13}}{p}t^p||\hat{u}_1(2)||^{p}_{p}\\
		&&\hspace{6cm}(\mbox{recall}\ ||\hat{u}_1(2)||_2=1)\\
		 &\leq&\frac{t^p}{p}\left[1+\frac{c_{13}}{\hat{\lambda}_1(p)}\right]||\hat{u}_1(2)||^p-\frac{t^2}{2}\left[\int_{\Omega}(\eta(z)-\hat{\lambda}_1(2))\hat{u}_1(2)^2dz-\delta\right].
	\end{eqnarray*}
	
	Evidently, $\xi_0=\int_{\Omega}(\eta(z)-\hat{\lambda}_1(2))\hat{u}_1(2)^2dz>0$. So, if $\delta\in(0,\xi_0)$, then
	$$\psi_+(t\hat{u}_1(2))\leq\frac{t^p}{p}c_{14}-\frac{t^2}{2}c_{15}\ \mbox{some}\ c_{14},c_{15}>0.$$
	
	Since $p>2$, by choosing small $t\in(0,1)$, we have
	\begin{eqnarray*}
		&&\psi_+(t\hat{u}_1(2))<0,\\
		&\Rightarrow&\psi_+(u_*)<0=\psi_+(0)\ \mbox{(see (\ref{eq63})), hence}\ u_*\neq 0.
	\end{eqnarray*}
	
	From (\ref{eq63}) we have
	\begin{eqnarray}\label{eq64}
		&&\psi'_+(u_*)=0,\nonumber\\
		&\Rightarrow&A_p(u_*)+A(u_*)=(\eta-\delta)u^+_*-c_{13}(u^+_*)^{p-1}.
	\end{eqnarray}
	
	On (\ref{eq64}) we act with $-u^-_*\in W^{1,p}_{0}(\Omega)$ and obtain $u^*\geq 0,\ u_*\neq 0$. Then
	\begin{eqnarray*}
		&&A_p(u_*)+A(u_*)=(\eta-\delta)u_*-c_{13}u^{p-1}_{*},\\
		&\Rightarrow&u_*\in C_+\backslash\{0\}\ (\mbox{nonlinear regularity solves (\ref{eq62})}).
	\end{eqnarray*}
	
	In fact, we have
	\begin{eqnarray*}
		&&\Delta_pu_*(z)+\Delta u_*(z)\leq c_{13}u_*(z)^{p-1}\ \mbox{for a.a.}\ z\in\Omega\\
		&\Rightarrow&u_*\in \mbox{int}\, C_+\ (\mbox{see Pucci and Serrin \cite[pp. 111 and 120]{29}}).
	\end{eqnarray*}
	
	Next, we show the uniqueness of this positive solutions. To this end, let
	$$G_0(t)=\frac{t^p}{p}+\frac{t^2}{2}\ \mbox{for all}\ t\geq 0.$$
	
	Then $G_0(\cdot)$ is increasing and $t\rightarrow G_0(t^{1/2})$ is convex. We set
	$$G(y)=G_0(|y|)\ \mbox{for all}\ y\in\RR^N.$$
	
	Evidently, $G\in C^1(\RR^N)$ (recall $p>2$) and we have
  \begin{eqnarray*}
		&&\nabla G(y)=a(y)=|y|^{p-2}y+y\ \mbox{for all}\ y\in\RR^N\\
		&&\mbox{div}\, a(Du)=\Delta_p u+\Delta u\ \mbox{for all}\ u\in W^{1,p}_{0}(\Omega).
	\end{eqnarray*}
	
	Let $\mu_+:L^1(\Omega)\rightarrow\RR$ be the integral functional defined by
	$$\mu_+(u)=\left\{
	\begin{array}{ll}
		\displaystyle\int_{\Omega}G(Du^{1/2})dz&\mbox{if}\ u\geq 0,\ u^{1/2}\in W^{1,p}_{0}(\Omega)\\
		+\infty&\mbox{otherwise}.
	\end{array}\right.$$
	
	Let $u_1,u_2\in \mbox{dom}\, \mu_+=\{u\in L^1(\Omega):\mu_+(u)<\infty\}$ (the effective domain of $\mu_+$) and let $y=(tu_1+(1-t)u_2)^{1/2}\in W^{1,p}_{0}(\Omega)$ with $t\in[0,1]$. From Benguria, Brezis and Lieb \cite[Lemma 4]{5}, we have
	\begin{eqnarray*}
		&|Dy(z)|&\leq\left(t|Du_1(z)^{1/2}|^2+(1-t)|Du_2(z)^{1/2}|^2\right)^{1/2}\ \mbox{for a.a.}\ z\in\Omega,\\
		\Rightarrow&G_0(|Dy(z)|)&\leq G_0\left(t|Du_1(z)^{1/2}|^2+(1-t)|Du_2(z)^{1/2}|^2\right)\ (\mbox{since}\ G_0\ \mbox{is increasing)}\\
		&&\leq tG_0(|Du_1(z)^{1/2}|)+(1-t)G_0(|Du_2(z)^{1/2}|)\ (\mbox{since}\ t\rightarrow G_0(t^{1/2})\\
		&&\hspace{7.0cm}\mbox{is convex}),\\
		\Rightarrow&G(Dy(z))&\leq tG(Du_1(z)^{1/2})+(1-t)G(Du_2(z)^{1/2})\ \mbox{for a.a.}\ z\in\Omega,\\
		\Rightarrow&\mu_+&\mbox{is convex.}
	\end{eqnarray*}
	
	Also, by the Fatou lemma we see that $\mu_+$ is lower semicontinuous.
	
	Let $y_*\in W^{1,p}_{0}(\Omega)$ be another positive solution of (\ref{eq62}). From the first part of the proof, we have $y_*\in \mbox{int}\, C_+$. Let $h\in C^{1}_{0}(\overline{\Omega})$ and $t\in(-1,1)$ with $|t|$ small. Then we will have
	\begin{eqnarray*}
		&&u_*^2+th\in \mbox{int}\, C_+\ \mbox{and}\ y_*^2\ th\in \mbox{int}\, C_+\\
		&\Rightarrow&u_*^2,\ y_*^2\in \mbox{dom}\, \mu_+.
	\end{eqnarray*}
	
	So, $\mu_+$ is G\^ateaux differentiable at $u_*$ and at $y_*$ in the direction $h$. Using the chain rule, we obtain
	\begin{eqnarray*}
		&&\mu'_+(u_*^2)(h)=\frac{1}{2}\int_{\Omega}\frac{-\Delta_pu_*-\Delta u_*}{u_*}hdz\\
		&&\mu'_+(y_*^2)(h)=\frac{1}{2}\int_{\Omega}\frac{-\Delta_py_*-\Delta y_*}{y_*}hdz\ \mbox{for all}\ h\in C^{1}_{0}(\overline{\Omega}).
	\end{eqnarray*}
	
	The convexity of $\mu_+$ implies that $\mu'_+$ is monotone. Hence
	\begin{eqnarray*}
		0&\leq&\frac{1}{2}\int_{\Omega}\left(\frac{-\Delta_pu_*-\Delta u_*}{u_*}-\frac{-\Delta_py_*-\Delta y_*}{y_*}\right)(u^{2}_{*}-y^{2}_{*})dz\\
		&=&\frac{1}{2}\int_{\Omega}c_{13}(y^{p-2}_{*}-u^{p-2}_{*})(u^{2}_{*}-y^{2}_{*})dz\leq 0\ (\mbox{recall}\ p>2),\\
		\Rightarrow&&u_*=y_*.
	\end{eqnarray*}
	
	This proves the uniqueness of the positive solution $u_*\in \mbox{int}\, C_+$.
	
	Since (\ref{eq62}) is odd, $v_*=-u_*\in-\mbox{int}\, C_+$ is the unique nontrivial negative solution of (\ref{eq62}).
\end{proof}

Using the proposition, we can establish the existence of extremal nontrivial constant sign solutions, that is, a smallest positive solution and a biggest nontrivial negative solution.

\begin{prop}\label{prop25}
	If hypothesis $H_4$ holds and $\lambda\in(\hat{\lambda}_1(p),\hat{\lambda}_1(p)+\epsilon)$ with $\epsilon>0$ as in Proposition \ref{prop23}, then problem \eqref{eqP} admits a smallest positive solution $u^{*}_{\lambda}\in {\rm int}\, C_+$ and a biggest negative solution $v^{*}_{\lambda}\in-{\rm int}\, C_+$.
\end{prop}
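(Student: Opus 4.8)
The plan is to establish the existence of a smallest positive solution by a lower-bound/monotone approximation argument using the auxiliary problem \eqref{eq62}, and then to invoke the oddness of the situation for the negative solutions. The key structural fact that makes this work is that \emph{every} positive solution of \eqref{eqP} dominates the unique positive solution $u_*\in\mathrm{int}\,C_+$ of \eqref{eq62}; this provides a uniform lower barrier for the set of positive solutions. With that barrier in hand, the infimum of the (downward-directed) solution set will again be a solution, by a standard compactness argument using the $(S)_+$ property of $A_p+A$ and the nonlinear regularity theory.

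\textbf{Step 1: A uniform lower bound.} First I would let $u\in\mathrm{int}\,C_+$ be an arbitrary positive solution of \eqref{eqP} (such solutions exist by Proposition \ref{prop23}, e.g.\ $u_0$). Setting $\rho=\|u\|_\infty$ and using \eqref{eq61} with the corresponding $\delta>0$ small (as in Proposition \ref{prop24}), I would show that $u$ is an upper solution for the auxiliary problem \eqref{eq62}, while a small multiple $t\hat u_1(2)$ (for $t\in(0,1)$ small, using $\eta\not\equiv\hat\lambda_m(2)\geq\hat\lambda_1(2)$) serves as a lower solution. A truncation argument on the energy functional $\psi_+$ restricted between these, together with the $(S)_+$ property (Proposition \ref{prop6}) and the nonlinear maximum principle of Pucci--Serrin, produces a solution in $[t\hat u_1(2),u]\cap\mathrm{int}\,C_+$; by the uniqueness in Proposition \ref{prop24} this solution must equal $u_*$, hence $u_*\leq u$. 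Thus $u_*$ is a lower bound for the set $S_+$ of all positive solutions of \eqref{eqP}.

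\textbf{Step 2: Downward directedness and passage to the infimum.} Next I would show that $S_+$ is downward directed (given $u_1,u_2\in S_+$ there is $u\in S_+$ with $u\leq u_1$, $u\leq u_2$), by a truncation argument on $\varphi_\lambda^+$ between $0$ and $\min\{u_1,u_2\}$ combined with hypothesis $H_4(iv)$ — this is the standard device for $(p,q)$-equations. Then, using a result of Dunford--Pettis / the structure of downward-directed sets, I would pick a decreasing sequence $u_n\in S_+$ with $u_n\downarrow \inf S_+$ pointwise and $\inf_n u_n = \inf S_+$ in the relevant sense. Since each $u_n$ solves \eqref{eqP} and $0\leq u_n\leq u_1$, hypotheses $H_4(i)$ give $\{u_n\}$ bounded in $W^{1,p}_0(\Omega)$; passing to a weak limit, testing the equation with $u_n-u$ and using that $A$ is monotone and $A_p$ is $(S)_+$, I obtain $u_n\to u^*_\lambda$ in $W^{1,p}_0(\Omega)$, so $u^*_\lambda$ solves \eqref{eqP}. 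The uniform lower bound $u_*\leq u_n$ from Step 1 passes to the limit, giving $u^*_\lambda\geq u_*>0$, so $u^*_\lambda\neq 0$; nonlinear regularity and the Pucci--Serrin maximum principle then give $u^*_\lambda\in\mathrm{int}\,C_+$, and by construction $u^*_\lambda\leq u$ for every $u\in S_+$, i.e.\ $u^*_\lambda$ is the smallest positive solution.

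\textbf{Step 3: Negative solutions.} Finally, the negative case is entirely symmetric: working with $\varphi_\lambda^-$, the negative truncation $f_-$, and the unique negative solution $v_*=-u_*$ of \eqref{eq62}, the same two steps yield a biggest negative solution $v^*_\lambda\in-\mathrm{int}\,C_+$. I expect the \textbf{main obstacle} to be Step 1 — verifying rigorously that an arbitrary positive solution of \eqref{eqP} lies above $u_*$, since this requires setting up the right truncated functional for \eqref{eq62} cut off at the level of the given solution $u$ and correctly invoking the uniqueness from Proposition \ref{prop24}; the downward-directedness in Step 2 is routine once $H_4(iv)$ is in place, and the limiting argument is standard.
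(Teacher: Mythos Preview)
Your proposal is correct and follows essentially the same route as the paper's own proof: establish the uniform lower bound $u_*\leq u$ for every $u\in S_+(\lambda)$ by truncating the reaction of the auxiliary problem (\ref{eq62}) at the level of a given solution $u$, minimizing the truncated functional, and invoking the uniqueness in Proposition~\ref{prop24}; then use downward directedness of $S_+(\lambda)$, extract a minimizing sequence via Dunford--Schwartz, and pass to the limit with the $(S)_+$ property of $A_p$. The only cosmetic differences are that the paper truncates the auxiliary reaction at $0$ from below (rather than introducing an explicit subsolution $t\hat u_1(2)$) and attributes the downward-directedness to the monotonicity of $u\mapsto A_p(u)+A(u)$ rather than to $H_4(iv)$, but the logical skeleton is the same.
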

\begin{proof}
	Let $S_+(\lambda)$ be the set of positive of problem \eqref{eqP}. From Proposition \ref{prop23} and its proof, we have
	$$S_+(\lambda)\neq\emptyset \ \mbox{and}\ S_+(\lambda)\subseteq \mbox{int}\, C_+.$$
	
	As in Gasinski and Papageorgiou \cite{14}, exploiting the monotonicity of $u\rightarrow A_p(u)+A(u)$ we have that the solution set $S_+(\lambda)$ is downward directed, that is, if $u_1,u_2\in S_+(\lambda)$, then we can find $u\in S_+(\lambda)$ such that $u\leq u_1,\ u\leq u_2$. Since we are looking for the smallest positive solution, without any loss of generality we may assume that there exists $M_6>0$ such that
	\begin{equation}\label{eq65}
		0\leq u(z)\leq M_6\ \mbox{for all}\ z\in\overline{\Omega},\ \mbox{all}\ u\in S_+(\lambda).
	\end{equation}
	
	From Dunford and Schwartz \cite[p. 336]{11}, we know that we can find $\{u_n\}_{n\geq 1}\subseteq S_+(\lambda)$ such that $\inf S_+(\lambda)=\inf\limits_{n\geq 1}\ u_n.$
	
	We have
	\begin{eqnarray}\label{eq66}
		&&A_p(u_n)+A(u_n)=\lambda u_n^{p-1}+N_f(u_n)\ \mbox{for all}\ n\geq 1,\\
		&\Rightarrow&\{u_n\}_{n\geq 1}\subseteq W^{1,p}_{0}(\Omega)\ \mbox{is bounded (see (\ref{eq65}))}.\nonumber
	\end{eqnarray}
	
	So, we may assume that
	\begin{equation}\label{eq67}
		u_n\stackrel{w}{\rightarrow}u^{*}_{\lambda}\ \mbox{in}\ W^{1,p}_{0}(\Omega)\ \mbox{and}\ u_n\rightarrow u^{*}_{\lambda}\ \mbox{in}\ L^p(\Omega)\ \mbox{as}\ n\rightarrow\infty.
	\end{equation}
	
	On (\ref{eq66}) we act with $u_n-u^{*}_{\lambda}\in W^{1,p}_{0}(\Omega)$, pass to the limit as $n\rightarrow\infty$ and use (\ref{eq67}). Then
	\begin{eqnarray}\label{eq68}
		&&\lim\limits_{n\rightarrow\infty}\left[\left\langle A_p(u_n),u_n-u^{*}_{\lambda}\right\rangle+\left\langle A(u_n),u_n-u^{*}_{\lambda}\right\rangle\right]=0,\nonumber\\
		&\Rightarrow&u_n\rightarrow u^{*}_{\lambda}\ \mbox{in}\ W^{1,p}_{0}(\Omega)\ \mbox{as}\ n\rightarrow\infty\ (\mbox{see the proof of Proposition \ref{prop17}}).
	\end{eqnarray}
	
	\begin{claim}
		$u_*\leq u$ for all $u\in S_+(\lambda)$.
	\end{claim}
	
		Let $u\in S_+(\lambda)$ and consider the following function
		\begin{eqnarray}\label{eq69}
			\beta_+(z,x)=\left\{
			\begin{array}{ll}
				0&\mbox{if}\ x<0\\
				(\eta(z)-\delta)x-c_{13}x^{p-1}&\mbox{if}\ 0\leq x\leq u(z)\\
				(\eta(z)-\delta)u(z)-c_{13}u(z)^{p-1}&\mbox{if}\ u(z)<x
			\end{array}\right.\ (\mbox{see (\ref{eq61})})
		\end{eqnarray}
		(see (\ref{eq61})). This is a Carath\'eodory function. We set $B_+(z,x)=\int^{x}_{0}\beta_+(z,s)ds$ and consider the $C^1$-functional $\xi_+:W^{1,p}_{0}(\Omega)\rightarrow\RR$ defined by
		 $$\xi_+(u)=\frac{1}{p}||Du||^{p}_{p}+\frac{1}{2}||Du||^{2}_{2}-\int_{\Omega}B_+(z,u(z))dz\ \mbox{for all}\ u\in W^{1,p}_{0}(\Omega).$$
		
		From (\ref{eq69}) it is clear that $\xi_+$ is coercive. Also, it is sequentially weakly lower semicontinuous. So, we can find $\hat{u}_*\in W^{1,p}_{0}(\Omega)$ such that
		\begin{equation}\label{eq70}
			\xi_+(\hat{u}_*)=\inf[\xi_+(u):u\in W^{1,p}_{0}(\Omega)].
		\end{equation}
			
		As before (see the proof of Proposition \ref{prop24}), for $y\in \mbox{int}\, C_+$, and for small $t>0$  (at least such that $ty\leq u$, recall that $u\in \mbox{int}\, C_+$, and see Lemma 3.3. of Filippakis, Kristaly and Papageorgiou \cite{12}), we have
		\begin{eqnarray*}
			&&\xi_+(ty)<0=\xi_+(0),\\
			&\Rightarrow&\xi_+(\hat{u}_*)<0=\xi_+(0)\ \mbox{(see (\ref{eq70})), hence}\ \hat{u}_*\neq 0.
		\end{eqnarray*}
		
		From (\ref{eq70}) we have
		\begin{eqnarray}\label{eq71}
			&&\xi_+'(\hat{u}_*)=0,\nonumber\\
			&\Rightarrow&A_p(\hat{u}_*)+A(\hat{u}_*)=N_{\beta_+}(\hat{u}_*).
		\end{eqnarray}
		
		On (\ref{eq71}) we act with $-\hat{u}^{-}_{*}\in W^{1,p}_{0}(\Omega)$ and obtain $\hat{u}_*\geq 0,\ \hat{u}_*\neq 0$ (see (\ref{eq69})). Also, on (\ref{eq71}) we act with $(\hat{u}_*-u)^+\in W^{1,p}_{0}(\Omega)$ we have
		\begin{eqnarray*}
			&&\left\langle A_p(\hat{u}_*),(\hat{u}_*-u)^+\right\rangle+\left\langle A(\hat{u}_*),(\hat{u}_*-u)^+\right\rangle\\
			&=&\int_{\Omega}\beta_+(z,\hat{u}_*)(\hat{u}_*-u)^+dz\\
			&=&\int_{\Omega}[(\eta(z)-\delta)u-c_{13}u^{p-1}](\hat{u}_*-u)^+ dz\ (\mbox{see (\ref{eq71})})\\
			&\leq&\int_{\Omega}[\lambda u^{p-1}+f(z,u)](\hat{u}_*-u)^+dz\ (\mbox{see (\ref{eq61}) and recall}\ c_{13}=c_{12}-\lambda>0)\\
			&=&\left\langle A_p(u)+A(u),(\hat{u}_*-u)^+\right\rangle\ (\mbox{since}\ u\in S_+(\lambda))\\
			&\Rightarrow&\left\langle A_p(\hat{u}_*)-A_p(u),(\hat{u}_*-u)^+\right\rangle+||D(\hat{u}_*-u)^+||^{2}_{2}\leq 0,\\
			&\Rightarrow&\hat{u}_*\leq u.
		\end{eqnarray*}
		
		Therefore we have proved that
$$\hat{u}_*\in[0,u]=\{y\in W^{1,p}_{0}(\Omega):0\leq y(z)\leq u(z)\ \mbox{for a.a.}\ z\in\Omega\},\ \hat{u}_*\neq 0.$$

So, (\ref{eq71}) becomes
\begin{eqnarray*}
	&&A_p(\hat{u}_*)+A(\hat{u}_*)=(\eta(z)-\delta)\hat{u}_*-c_{13}\hat{u}^{p-1}_{*},\\
	&\Rightarrow&\hat{u}_*\ \mbox{is a positive solution of problem (\ref{eq62})},\\
	&\Rightarrow&\hat{u}_*=u_*\in \mbox{int}\, C_+\ (\mbox{see Proposition \ref{prop24}}).
\end{eqnarray*}

Thus we have proved the claim.

Passing to the limit as $n\rightarrow\infty$ in (\ref{eq66}) and using (\ref{eq68}), we obtain
\begin{eqnarray*}
	 &&A_p(u^{*}_{\lambda})+A(u^{*}_{\lambda})=\lambda(u^{*}_{\lambda})^{p-1}+N_f(u^{*}_{\lambda}),\ u_*\leq u^{*}_{\lambda},\\
	&\Rightarrow&u^{*}_{\lambda}\in S_+(\lambda)\ \mbox{and}\ u^{*}_{\lambda}=\inf S_+(\lambda).
\end{eqnarray*}

For the biggest negative solution we use the set $S_-(\lambda)$ which is upward directed (that is, if $v_1,v_2\in S_-(\lambda)$, then we can find $v\in S_-(\lambda)$ such that $v_1\leq v,\ v_2\leq v$). Reasoning as above, we produce $v^{*}_{\lambda}\in S_-(\lambda)\subseteq-\mbox{int}\, C_+$ a biggest negative solution of \eqref{eqP}.
\end{proof}

Using these extremal constant sign solutions, we can produce a nodal  solution of problem \eqref{eqP}.

\begin{prop}\label{prop26}
	If hypothesis $H_4$ holds and $\lambda\in(\hat{\lambda}_1(p),\hat{\lambda}_1(p)+\epsilon)$ with $\epsilon>0$ as in Proposition \ref{prop23}, then problem \eqref{eqP} admits a nodal solution $y_0\in[v^{*}_{\lambda},u^{*}_{\lambda}]\cap C^{1}_{0}(\overline{\Omega})$.
\end{prop}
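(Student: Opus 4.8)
The plan is to exploit the extremal constant sign solutions $u^{*}_{\lambda}\in \mbox{int}\, C_+$ and $v^{*}_{\lambda}\in -\mbox{int}\, C_+$ furnished by Proposition \ref{prop25}: truncate the reaction of \eqref{eqP} at the levels $v^{*}_{\lambda}$ and $u^{*}_{\lambda}$ and then produce a third critical point lying strictly between them, which will turn out to be nodal. Concretely, introduce the Carath\'eodory function
$$\hat{g}(z,x)=\left\{\begin{array}{ll}\lambda|v^{*}_{\lambda}(z)|^{p-2}v^{*}_{\lambda}(z)+f(z,v^{*}_{\lambda}(z))&\mbox{if}\ x<v^{*}_{\lambda}(z)\\ \lambda|x|^{p-2}x+f(z,x)&\mbox{if}\ v^{*}_{\lambda}(z)\leq x\leq u^{*}_{\lambda}(z)\\ \lambda u^{*}_{\lambda}(z)^{p-1}+f(z,u^{*}_{\lambda}(z))&\mbox{if}\ u^{*}_{\lambda}(z)<x\end{array}\right.$$
and its positive and negative truncations $\hat{g}_{\pm}(z,x)=\hat{g}(z,\pm x^{\pm})$, with primitives $\hat{G},\hat{G}_{\pm}$ and associated $C^1$-functionals $\hat{\varphi},\hat{\varphi}_{\pm}:W^{1,p}_{0}(\Omega)\to\RR$. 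Acting on $A_p(u)+A(u)=N_{\hat{g}}(u)$ with the test functions $(u-u^{*}_{\lambda})^+$ and $(v^{*}_{\lambda}-u)^+$, and using the strict monotonicity of $A_p$, the monotonicity of $A$ and the fact that $u^{*}_{\lambda},v^{*}_{\lambda}$ solve \eqref{eqP}, one obtains $K_{\hat{\varphi}}\subseteq[v^{*}_{\lambda},u^{*}_{\lambda}]$, and likewise $K_{\hat{\varphi}_+}\subseteq[0,u^{*}_{\lambda}]$, $K_{\hat{\varphi}_-}\subseteq[v^{*}_{\lambda},0]$; on these order intervals $\hat{g}$ equals $\lambda|x|^{p-2}x+f(z,x)$, so such critical points are genuine solutions of \eqref{eqP}.

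The second step is to show that $u^{*}_{\lambda}$ and $v^{*}_{\lambda}$ are local minimizers of $\hat{\varphi}$. Since $\hat{g}$ is bounded, $\hat{\varphi}_+$ is coercive and sequentially weakly lower semicontinuous, hence it has a global minimizer $\tilde{u}$; by $H_4(iii)$ (note $\eta(z)\geq\hat{\lambda}_m(2)>\hat{\lambda}_1(2)$ because $m\geq 2$) one has $\hat{\varphi}_+(t\hat{u}_1(2))<0$ for small $t>0$, so $\tilde{u}\neq 0$. Then $\tilde{u}\in K_{\hat{\varphi}_+}\subseteq[0,u^{*}_{\lambda}]$ is a nontrivial nonnegative solution of \eqref{eqP} with $\tilde{u}\leq u^{*}_{\lambda}$, hence $\tilde{u}\in S_+(\lambda)$ and the minimality of $u^{*}_{\lambda}$ forces $\tilde{u}=u^{*}_{\lambda}$; thus $u^{*}_{\lambda}$ is the global minimizer of $\hat{\varphi}_+$. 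As $\hat{\varphi}_+=\hat{\varphi}$ on $C_+$ and $u^{*}_{\lambda}\in\mbox{int}\, C_+$, this shows $u^{*}_{\lambda}$ is a local $C^{1}_{0}(\overline{\Omega})$-minimizer of $\hat{\varphi}$, and Proposition \ref{prop7} (which applies since $\hat{\varphi}$ has the structural form of $\varphi_0$ with bounded reaction) upgrades it to a local $W^{1,p}_{0}(\Omega)$-minimizer of $\hat{\varphi}$. The symmetric argument with $\hat{\varphi}_-$ gives the same for $v^{*}_{\lambda}$.

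Now I would run the mountain pass theorem. The functional $\hat{\varphi}$ is coercive, hence bounded below and satisfying the $C$-condition; we may assume $K_{\hat{\varphi}}$ is finite (otherwise, by the extremality of $u^{*}_{\lambda},v^{*}_{\lambda}$, only finitely many elements of $K_{\hat{\varphi}}\subseteq[v^{*}_{\lambda},u^{*}_{\lambda}]$ are of constant sign, so we already obtain infinitely many nodal solutions) and, without loss of generality, $\hat{\varphi}(v^{*}_{\lambda})\leq\hat{\varphi}(u^{*}_{\lambda})$. Since $u^{*}_{\lambda}$ is an isolated local minimizer, the standard argument (as in Aizicovici, Papageorgiou and Staicu \cite{1}) yields $\rho\in(0,||u^{*}_{\lambda}-v^{*}_{\lambda}||)$ with
$$\hat{\varphi}(v^{*}_{\lambda})\leq\hat{\varphi}(u^{*}_{\lambda})<\inf[\hat{\varphi}(u):||u-u^{*}_{\lambda}||=\rho]=\hat{m}_{\rho},$$
so Theorem \ref{th3} applied to $\hat{\varphi}$ with endpoints $v^{*}_{\lambda},u^{*}_{\lambda}$ produces $y_0\in K_{\hat{\varphi}}$ with $\hat{m}_{\rho}\leq\hat{\varphi}(y_0)$ and, $y_0$ being a critical point of mountain pass type, $C_1(\hat{\varphi},y_0)\neq 0$. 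The energy inequality gives $y_0\notin\{u^{*}_{\lambda},v^{*}_{\lambda}\}$. To exclude $y_0=0$, I would compute $C_k(\hat{\varphi},0)$ exactly as in the proof of Proposition \ref{prop13}: using the homotopy $(1-t)\hat{\varphi}+t\psi$ with $\psi$ built from $\eta=f'_x(\cdot,0)$, hypothesis $H_4(iii)$, the Lieberman regularity bootstrap and $2<p$ (so the $p$-homogeneous term is lower order at the origin) to rule out critical points near $0$ other than $0$ along the homotopy, homotopy invariance together with Cingolani--Vannella \cite{9} give $C_k(\hat{\varphi},0)=\delta_{k,d_m}\ZZ$ with $d_m\geq 2$; hence $C_1(\hat{\varphi},0)=0\neq C_1(\hat{\varphi},y_0)$ and $y_0\neq 0$.

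Finally, $y_0\in K_{\hat{\varphi}}\subseteq[v^{*}_{\lambda},u^{*}_{\lambda}]$ solves \eqref{eqP} and $y_0\notin\{0,u^{*}_{\lambda},v^{*}_{\lambda}\}$; if $y_0\geq 0$ it would be a nonnegative solution of \eqref{eqP} with $y_0\leq u^{*}_{\lambda}$, so by the extremality of $u^{*}_{\lambda}$ (Proposition \ref{prop25}) either $y_0=0$ or $y_0=u^{*}_{\lambda}$, a contradiction; symmetrically $y_0\not\leq 0$. Therefore $y_0$ changes sign, and the nonlinear regularity theory (\cite{15}, \cite{16}) gives $y_0\in C^{1}_{0}(\overline{\Omega})$, so $y_0\in[v^{*}_{\lambda},u^{*}_{\lambda}]\cap C^{1}_{0}(\overline{\Omega})$ is the desired nodal solution. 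I expect the main obstacle to be the second step --- converting the minimality of $\hat{\varphi}_{\pm}$ at the extremal solutions into $W^{1,p}_{0}(\Omega)$-local minimality of $\hat{\varphi}$ (via the $C^1$-versus-$W^{1,p}$ bridge of Proposition \ref{prop7}) --- together with the critical group computation $C_k(\hat{\varphi},0)=\delta_{k,d_m}\ZZ$ needed to discard the trivial critical point.
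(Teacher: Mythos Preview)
Your proposal is correct and follows the paper's scheme closely: truncate the reaction at the extremal constant sign solutions, show these are local minimizers of the truncated functional $\hat{\varphi}$ via the $C^{1}_{0}$-versus-$W^{1,p}_{0}$ bridge (Proposition~\ref{prop7}), run the mountain pass between them to obtain $y_0$, and then distinguish $y_0$ from $0$ by comparing critical groups. The only substantive difference is in how $y_0\neq 0$ is argued. The paper first invokes hypothesis $H_4(iv)$ together with the tangency principle of Pucci--Serrin and the comparison result of Arcoya--Ruiz to place $y_0$ in $\mbox{int}_{C^{1}_{0}(\overline{\Omega})}[v^{*}_{\lambda},u^{*}_{\lambda}]$, and then homotopes $\hat{\varphi}_{\lambda}$ to $\varphi_{\lambda}$ near $y_0$ so that Proposition~\ref{prop13} (which computes $C_k(\varphi_{\lambda},0)$) can be compared with $C_1(\varphi_{\lambda},y_0)\neq 0$. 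You instead compute $C_k(\hat{\varphi},0)$ directly, which is legitimate because $0\in\mbox{int}_{C^{1}_{0}(\overline{\Omega})}[v^{*}_{\lambda},u^{*}_{\lambda}]$ and the Lieberman bootstrap forces any critical points of your homotopy near $0$ into the order interval, where $\hat{\varphi}$ and $\varphi_{\lambda}$ coincide; from there the argument of Proposition~\ref{prop13} goes through verbatim. Your shortcut is enough for Proposition~\ref{prop26} as stated and does not use $H_4(iv)$ at this stage; the paper's detour additionally yields the sharper localization $y_0\in\mbox{int}_{C^{1}_{0}(\overline{\Omega})}[v^{*}_{\lambda},u^{*}_{\lambda}]$, which is then quoted in Theorems~\ref{th27} and~\ref{th28}.
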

\begin{proof}
	Let $u^{*}_{\lambda}\in \mbox{int}\, C_+$ and $v^{*}_{\lambda}\in-\mbox{int}\, C_+$ be the extremal constant sign solutions of \eqref{eqP} produced in Proposition \ref{prop26}. We introduce the following truncation of the reaction in problem \eqref{eqP}
	\begin{eqnarray}\label{eq72}
		g_{\lambda}(z,x)=\left\{
		\begin{array}{ll}
			 \lambda|v^{*}_{\lambda}(z)|^{p-2}v^{*}_{\lambda}(z)+f(z,v^{*}_{\lambda}(z))&\mbox{if}\ x<v^{*}_{\lambda}(z)\\
			\lambda|x|^{p-2}x+f(z,x)&\mbox{if}\ v^{*}_{\lambda}(z)\leq x\leq u^{*}_{\lambda}(z)\\
			\lambda u^{*}_{\lambda}(z)^{p-1}+f(z,u^{*}_{\lambda}(z))&\mbox{if}\ u^{*}_{\lambda}(z)<x.
		\end{array}\right.
	\end{eqnarray}
	
	This is a Carath\'eodory function. We set $G_{\lambda}(z,x)=\int^{x}_{0}g_{\lambda}(z,s)ds$ and consider the $C^1$-functional $\hat{\varphi}_{\lambda}:W^{1,p}_{0}(\Omega)\rightarrow\RR$ defined by $$\hat{\varphi}_{\lambda}=\frac{1}{p}||Du||^{p}_{p}+\frac{1}{2}||Du||^{2}_{2}-\int_{\Omega}G_{\lambda}(z,u(z))dz\ \mbox{for all}\ u\in W^{1,p}_{0}(\Omega).$$
	
	Also, we introduce $g^{\pm}_{\lambda}(z,x)=g_{\lambda}(z,\pm x^{\pm})$, $G^{\pm}_{\lambda}(z,x)=\int^{x}_{0}g^{\pm}_{\lambda}(z,s)ds$ and the $C^1$-functional $\hat{\varphi}^{\pm}_{\lambda}:W^{1,p}_{0}(\Omega)\rightarrow\RR$ defined by
	 $$\hat{\varphi}^{\pm}_{\lambda}(u)=\frac{1}{p}||Du||^{p}_{p}+\frac{1}{2}||Du||^{2}_{2}-\int_{\Omega}G^{\pm}_{\lambda}(z,u(z))dz\ \mbox{for all}\ u\in W^{1,p}_{0}(\Omega).$$
	
	Reasoning as in the proof of Proposition \ref{prop25} and using (\ref{eq72}), we obtain
	$$K_{\hat{\varphi}_{\lambda}}\subseteq[v^{*}_{\lambda},u^{*}_{\lambda}],\ K_{\hat{\varphi}^{+}_{\lambda}}\subseteq[0,u^{*}_{\lambda}];\ K_{\hat{\varphi}^{-}_{\lambda}}\subseteq[v^{*}_{\lambda},0]$$
	
	The extremality of $u^{*}_{\lambda}\in \mbox{int}\, C_+$ and of $v^{*}_{\lambda}\in-\mbox{int}\, C_+$, implies that
	\begin{equation}\label{eq73}
		K_{\hat{\varphi}_{\lambda}}\subseteq[v^{*}_{\lambda},u^{*}_{\lambda}],\ K_{\hat{\varphi}^{+}_{\lambda}}=\{0,u^{*}_{\lambda}\},\ K_{\hat{\varphi}^{-}_{\lambda}}=\{v^{*}_{\lambda},0\}.
	\end{equation}
	
	\begin{claim}
		$u^{*}_{\lambda}$ and $v^{*}_{\lambda}$ are local minimizers of the functional $\hat{\varphi}_{\lambda}$.
	\end{claim}
		
		Clearly, $\hat{\varphi}^{+}_{\lambda}$ is coercive (see (\ref{eq72})). Also, it is sequentially weakly lower semicontinuous. So, by the Weierstrass theorem we can find $\hat{u}\in W^{1,p}_{0}(\Omega)$ such that
		\begin{equation}\label{eq74}
			\hat{\varphi}^{+}_{\lambda}(\hat{u})=\inf[\hat{\varphi}^{+}_{\lambda}(u):u\in W^{1,p}_{0}(\Omega)].
		\end{equation}
		
	As before hypothesis $H_4(iii)$ and the fact that $u^{*}_{\lambda}\in \mbox{int}\, C_+$ and $2<p$, imply that
	\begin{eqnarray*}
		&&\hat{\varphi}^{+}_{\lambda}(\pm\hat{u}_1(2))<0,\\
		&\Rightarrow&\hat{\varphi}^{+}_{\lambda}(\hat{u})<0=\hat{\varphi}^{+}_{\lambda}(0)\ (\mbox{see (\ref{eq73})}),\ \mbox{hence}\ \hat{u}\neq 0.
	\end{eqnarray*}
	
	From (\ref{eq74}) we have
	\begin{eqnarray*}
		&&\hat{u}\in K_{\hat{\varphi}^{+}_{\lambda}},\\
		&\Rightarrow&\hat{u}\in\{0,u^{*}_{\lambda}\},\ \hat{u}\neq 0,\\
		&\Rightarrow&\hat{u}=u^{*}_{\lambda}\in \mbox{int}\, C_+.
	\end{eqnarray*}
	
	Since $\left.\hat{\varphi}^{+}_{\lambda}\right|_{C_+}=\left.\hat{\varphi}_{\lambda}\right|_{C_+}$, it follows that $u^{*}_{\lambda}$ is a local $C^{1}_{0}(\overline{\Omega})$-minimizer of $\hat{\varphi}_{\lambda}$, hence it is a local $W^{1,p}_{0}(\Omega)$-minimizer of $\hat{\varphi}_{\lambda}$ (see Proposition \ref{prop7}).
	
	Similarly for $v^{*}_{\lambda}$, using this time the functional $\hat{\varphi}_{\lambda}$. This proves the claim.
	
	Without any loss of generality, we may assume that
	 $$\hat{\varphi}_{\lambda}(v^{*}_{\lambda})\leq\hat{\varphi}_{\lambda}(u^{*}_{\lambda}).$$
	
	The analysis is similar if the opposite inequality holds. We may assume that $K_{\hat{\varphi}_{\lambda}}$ is finite (otherwise we already have infinity many nodal solutions, see (\ref{eq73})). From the claim we know that $u^{*}_{\lambda}$ is a local minimizer of $\hat{\varphi}_{\lambda}$. So, we can find small $\rho\in(0,1)$ such that
	\begin{equation}\label{eq75} \hat{\varphi}_{\lambda}(v_*)\leq\hat{\varphi}_{\lambda}(u_*)<\inf[\hat{\varphi}_{\lambda}(u):||u-u^{*}_{\lambda}||=\rho]=m^{\lambda}_{\rho},\ ||v^{*}_{\lambda}-u^{*}_{\lambda}||>\rho
	\end{equation}
	(see Aizicovici, Papageorgiou and Staicu \cite{1} (proof of Proposition 22)).
	
	The functional $\hat{\varphi}_{\lambda}$ is coercive, hence it satisfies the $C$-condition (see \cite{27}). This fact and (\ref{eq75}) permit the use of Theorem \ref{th2} (the mountain pass theorem). So, we can find $y_0\in W^{1,p}_{0}(\Omega)$ such that
	\begin{equation}\label{eq76}
		y_0\in K_{\hat{\varphi}_{\lambda}}\subseteq[v^{*}_{\lambda},u^{*}_{\lambda}]\ \mbox{(see (\ref{eq73})) and}\ m^{\lambda}_{\rho}\leq\hat{\varphi}_{\lambda}(y_0).
	\end{equation}
	
	From (\ref{eq75}) and (\ref{eq76}) we have that $y_0\notin\{v^{*}_{\lambda},u^{*}_{\lambda}\}$ and $y_0$ is a solution of \eqref{eqP} (see (\ref{eq72})) with $y_0\in C^{1}_{0}(\overline{\Omega})$ (nonlinear regularity). We need to show that $y_0\neq 0$ in order to conclude that $y_0$ is nodal.
	
	Let $\rho=\max\{||u^{*}_{\lambda}||_{\infty},||v^{*}_{\lambda}||_{\infty}\}$ and let $\xi_{\rho}>0$ be as postulated by hypothesis $H_4(iv)$. Then
	\begin{eqnarray}\label{eq77}
		&&-\Delta_py_0(z)-\Delta y_0(z)+\xi_{\rho}(y_0(z))^{p-2}y_0(z)\nonumber\\
		&=&(\lambda+\xi_{\rho})|y_0(z)|^{p-2}y_0(z)+f(z,y_0(z))\nonumber\\
		&\leq&(\lambda+\xi_{\rho})u^{*}_{\lambda}(z)^{p-1}+f(z,u^{*}_{\lambda}(z))\ (\mbox{since}\ y_0\leq u^{*}_{\lambda},\ \mbox{see hypothesis} H_4(iv))\nonumber\\
		&=&-\Delta_pu^{*}_{\lambda}(z)-\Delta u^{*}_{\lambda}(z)+\xi_pu^{*}_{\lambda}(z)^{p-1}\ \mbox{a.e. in}\ \Omega.
	\end{eqnarray}
	
	As before (see the proof of Proposition \ref{prop24}), we consider the map $a:\RR^N\rightarrow \RR^N$ defined by
	\begin{eqnarray*}
		&&a(y)=|y|^{p-2}y+y\ \mbox{for all}\ y\in\RR^N,\\
		&\Rightarrow&\nabla a(y)=|y|^{p-2}\left(I+(p-2)\frac{y\otimes y}{|y|^2}\right)+I,\\
		&\Rightarrow&(\nabla a(y)\xi,\xi)_{\RR^N}\geq|\xi|^2\ \mbox{for all}\ y,\xi\in\RR^N.
	\end{eqnarray*}
	
	So, we can apply the tangency principle of Pucci and Serrin \cite[p. 35]{29}, and obtain
	$$y_0(z)<u^{*}_{\lambda}(z)\ \mbox{for all}\ z\in\Omega.$$
	
	Then from (\ref{eq77}) and Proposition 2.6 of Arcoya and Ruiz \cite{3}, we have
	$$u^{*}_{\lambda}-y_0\in \mbox{int}\, C_+.$$
	
	In a similar fashion, we show that
	$$y_0-v^{*}_{\lambda}\in \mbox{int}\, C_+.$$
	
	So, we have proved that
	\begin{equation}\label{eq78}
		y_0\in \mbox{int}_{C^{1}_{0}(\overline{\Omega})}[v^{*}_{\lambda},u^{*}_{\lambda}].
	\end{equation}
	
	We consider the deformation
	$$h(t,u)=h_t(u)=(1-t)\hat{\varphi}_{\lambda}(u)+t\varphi_{\lambda}(u)\ \mbox{for all}\ (t,u)\in[0,1]\times W^{1,p}_{0}(\Omega)$$
	
	Suppose we can find $\{t_n\}_{n\geq 1}\subseteq[0,1]$ and $\{u_n\}_{n\geq 1}\subseteq W^{1,p}_{0}(\Omega)$ such that
	\begin{eqnarray}\label{eq79}
		&&t_n\rightarrow t\ \mbox{in}\ [0,1],\ u_n\rightarrow y_0\ \mbox{in}\ W^{1,p}_{0}(\Omega)\ \mbox{as}\ n\rightarrow\infty\ \mbox{and}\ (h_{t_n})'_u(t_n,u_n)=0\\
		&&\hspace{9.5cm}\mbox{for all}\ n\geq 1.\nonumber
	\end{eqnarray}
	
	We have
	\begin{eqnarray*}
		&&A_p(u_n)+A(u_n)=(1-t_n)N_{g_{\lambda}}(u_n)+t_n\lambda|u_n|^{p-2}u_n+t_nN_f(u_n)\ n\geq 1\\
		&\Rightarrow&-\Delta_pu_n(z)-\Delta u_n(z)=(1-t_n)g_{\lambda}(z,u_n(z))+t_n\lambda|u_n(z)|^{p-2}u_n(z)+t_nf(z,u_n(z))\\
		&&\hspace{9.5cm}\mbox{for a.a.}\ z\in \Omega.
	\end{eqnarray*}
	
	From Ladyzhenskaya and Uraltseva \cite[p. 286]{15}, we know that there exists $M_7>0$ such that
	$$||u_n||_{\infty}\leq M_7\ \mbox{for all}\ n\geq 1.$$
	
	Hence by virtue of Theorem 1 of Lieberman \cite{16}, there exists $\alpha\in(0,1)$ and $M_8>0$ such that
	$$u_n\in C^{1,\alpha}_{0}(\overline{\Omega})\ \mbox{and}\ ||u_n||_{C^{1,\alpha}_{0}(\overline{\Omega})}\leq M_8\ \mbox{for all}\ n\geq 1.$$
	
	Exploiting the compact embedding of $C^{1,\alpha}_{0}(\overline{\Omega})$ into $C^{1}_{0}(\overline{\Omega})$ and using (\ref{eq79}), we have
	\begin{eqnarray*}
		&&u_n\rightarrow y_0\ \mbox{in}\ C^{1}_{0}(\overline{\Omega})\ \mbox{as}\ n\rightarrow\infty,\\
		&\Rightarrow&u_n\in[v^{*}_{\lambda},u^{*}_{\lambda}]\ \mbox{for all}\ n\geq n_0\ (\mbox{see (\ref{eq78})}).
	\end{eqnarray*}
	
	But from (\ref{eq72}) we see that $\{u_n\}_{n\geq 1}\subseteq K_{\varphi_{\lambda}}$, a contradiction to our hypotheses that $K_{\varphi_{\lambda}}$ is finite. So, (\ref{eq78}) cannot happen and hence the homotopy invariance of singular homology implies that
	\begin{equation}\label{eq80}
		C_k(\varphi_{\lambda},y_0)=C_k(\hat{\varphi}_{\lambda},y_0)\ \mbox{for all}\ k\geq 0.
	\end{equation}
	
	Recall that $y_0$ is a critical point of mountain pass type the functional $\hat{\varphi}_{\lambda}$. Therefore
	\begin{eqnarray}\label{eq81}
		&&C_1(\hat{\varphi}_{\lambda},y_0)\neq 0,\nonumber\\
		&\Rightarrow&C_1(\varphi_{\lambda},y_0)\neq 0\ (\mbox{see (\ref{eq80})}).
	\end{eqnarray}
	
	From Proposition \ref{prop13}, we know that
	\begin{eqnarray*}
		&&C_k(\varphi_{\lambda},0)=\delta_{k,d_m}\ZZ\ \mbox{for all}\ k\geq 0,\ \mbox{with}\ d_m\geq 2,\\
		&\Rightarrow&y_0\neq 0\ (\mbox{see (\ref{eq81})}),\\
		&\Rightarrow&y_0\in C^{1}_{0}(\overline{\Omega})\ \mbox{is nodal}.
	\end{eqnarray*}
\end{proof}

So, we can state the following multiplicity theorem for problem \eqref{eqP}.
\begin{theorem}\label{th27}
	If hypothesis $H_4$ holds, then there exists $\epsilon>0$ such that for all\\ $\lambda\in(\hat{\lambda}_1(p),\hat{\lambda}_1(p)+\epsilon)$ problem \eqref{eqP} has at least three nontrivial solutions
	$$u_0\in {\rm int}\, C_+,\ v_0\in-{\rm int}\, C_+\ \mbox{and}\ y_0\in int_{C^{1}_{0}(\overline{\Omega})}[v_0,u_0]\ \mbox{is nodal}.$$
\end{theorem}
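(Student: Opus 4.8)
The plan is to assemble the three required solutions directly from the preparatory results of Propositions~\ref{prop23}--\ref{prop26}, so the proof of the theorem itself is short. First I would fix $\epsilon>0$ to be precisely the constant produced in Proposition~\ref{prop23}; then for every $\lambda\in(\hat{\lambda}_1(p),\hat{\lambda}_1(p)+\epsilon)$ all the subsequent constructions are available.

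Next, by Proposition~\ref{prop25}, problem \eqref{eqP} admits a smallest positive solution $u_0:=u^{*}_{\lambda}\in\mbox{int}\, C_+$ and a biggest negative solution $v_0:=v^{*}_{\lambda}\in-\mbox{int}\, C_+$. These furnish two of the three nontrivial solutions, with exactly the claimed sign information. Then I would invoke Proposition~\ref{prop26} to obtain a nodal solution $y_0\in[v^{*}_{\lambda},u^{*}_{\lambda}]\cap C^{1}_{0}(\overline{\Omega})$; the comparison arguments there (the tangency principle together with the strong comparison of Arcoya and Ruiz) actually place $y_0$ in $\mbox{int}_{C^{1}_{0}(\overline{\Omega})}[v^{*}_{\lambda},u^{*}_{\lambda}]$. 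Since $y_0$ changes sign it is nonzero and satisfies $y_0\notin\{u_0,v_0\}$, so $u_0,v_0,y_0$ are three distinct nontrivial solutions of \eqref{eqP} with the stated location; this completes the proof.

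The point worth stressing is that the real work is \emph{not} in this final synthesis but has already been carried out in the preceding propositions: the coercivity of the truncated functionals $\varphi^{\pm}_{\lambda}$ and the two constant-sign local minimizers (Proposition~\ref{prop23}); the uniqueness for the odd auxiliary problem \eqref{eq62} via the Benguria--Brezis--Lieb convexity argument applied to the functional $\mu_+$ (Proposition~\ref{prop24}); the downward/upward directedness of $S_\pm(\lambda)$ together with the extremality argument using the truncation $\beta_+$ and Proposition~\ref{prop24} (Proposition~\ref{prop25}); and the Morse-theoretic comparison $C_1(\hat{\varphi}_{\lambda},y_0)\neq0$ against $C_k(\varphi_{\lambda},0)=\delta_{k,d_m}\ZZ$ with $d_m\geq2$ (from Proposition~\ref{prop13}) that forces $y_0\neq0$ (Proposition~\ref{prop26}). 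The only genuinely new step for the theorem is observing that the sign patterns make the three solutions pairwise distinct, which is immediate --- hence there is essentially no obstacle left at this level.
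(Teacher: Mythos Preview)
Your proposal is correct and matches the paper's approach exactly: Theorem~\ref{th27} is stated in the paper immediately after Proposition~\ref{prop26} with no separate proof, precisely because it is just the synthesis of Propositions~\ref{prop23}--\ref{prop26} that you describe. Your choice to take $u_0=u^{*}_{\lambda}$ and $v_0=v^{*}_{\lambda}$ from Proposition~\ref{prop25} (rather than the possibly non-extremal constant-sign solutions of Proposition~\ref{prop23}) is the right one, since it makes the localization $y_0\in\mbox{int}_{C^{1}_{0}(\overline{\Omega})}[v_0,u_0]$ follow directly from~(\ref{eq78}); the paper confirms this identification explicitly in the proof of Theorem~\ref{th28}.
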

\begin{remark}
	We stress that the above theorem provides sign information for all solutions and localizes them. None of the other papers mentioned in the introduction, contains such a multiplicity result for equations near resonance from above.
\end{remark}
	
	In fact we can improve Theorem \ref{th27} and generate a second nodal solution provided we strengthen the regularity of $f(z,\cdot)$.
		The new hypotheses on $f(z,x)$ are the following:
	
\smallskip
	$H_5:$ $f:\Omega\times\RR\rightarrow\RR$ is a measurable function such that for a.a. $z\in\Omega$, $f(z,0)=0,\ f(z,\cdot)\in C^1(\RR)$ and
	\begin{itemize}
		\item[(i)] $|f'_x(z,x)|\leq a(z)(1+|x|^{p-2})$ for a.a. $z\in\Omega$, all $x\in\RR$ with $a\in L^{\infty}(\Omega)_+$;
		\item[(ii)] there exists $\vartheta\in L^{\infty}(\Omega)$ such that $\vartheta(z)\leq 0$ for a.a. $z\in\Omega,\ \vartheta\neq 0$ and
		$$\limsup\limits_{x\rightarrow\pm\infty}\ \frac{pF(z,x)}{|x|^p}\leq\vartheta(z)\ \mbox{uniformly for a.a.}\ z\in\Omega;\ \mbox{and}$$
		\item[(iii)] there exists integer $m\geq 2$ such that
		\begin{eqnarray*}
			&&f'_x(z,0)\in[\hat{\lambda}_m(2),\hat{\lambda}_{m+1}(2)]\ \mbox{for a.a.}\ z\in\Omega,\ f'_x(\cdot,0)\not\equiv\hat{\lambda}_m(2),f'_x(\cdot,0)\not\equiv\hat{\lambda}_{m+1}(2)\\
			&&f'_x(z,0)=\lim\limits_{x\rightarrow 0}\ \frac{f(z,x)}{x}\ \mbox{uniformly for a.a.}\ z\in\Omega.
		\end{eqnarray*}
	\end{itemize}

\begin{remark}
	The differentiability of $f(z,\cdot)$ and hypothesis $H_5(i)$ imply that for every $\rho>0$, there exists $\xi_{\rho}>0$ for a.a. $z\in\Omega$, $x\rightarrow f(z,x)+\xi_{\rho}|x|^{p-2}x$ is nondecreasing on $[-\rho,\rho]$.
\end{remark}

We can now state the following multiplicity theorem.
\begin{theorem}\label{th28}
	If hypothesis $H_5$ holds, then there exists $\epsilon>0$ such that for all\\ $\lambda\in(\hat{\lambda}_1(p),\hat{\lambda}_1(p)+\epsilon)$ problem \eqref{eqP} admits at least four nontrivial solutions
	\begin{eqnarray*}
		&&u_0\in {\rm int}\, C_+,\ v_0\in-{\rm int}\, C_+\\
		\mbox{and}&&y_0,\hat{y}\in {\rm int}_{C^{1}_{0}(\overline{\Omega})}[v_0,u_0]\ \mbox{is nodal}.
	\end{eqnarray*}
\end{theorem}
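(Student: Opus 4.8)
The plan is to build on Theorem~\ref{th27}. Since $H_5$ is stronger than $H_4$ (the bound $H_5(i)$ on $f'_x$ integrates to a bound of the form $H_4(i)$ with $r=p$, and the Remark following $H_5$ supplies the monotonicity property $H_4(iv)$), Theorem~\ref{th27} already gives, for $\lambda\in(\hat{\lambda}_1(p),\hat{\lambda}_1(p)+\epsilon)$, the extremal constant sign solutions $u_0=u^{*}_{\lambda}\in\mbox{int}\,C_+$, $v_0=v^{*}_{\lambda}\in-\mbox{int}\,C_+$ and the nodal solution $y_0\in\mbox{int}_{C^{1}_{0}(\overline{\Omega})}[v^{*}_{\lambda},u^{*}_{\lambda}]$, the latter being a critical point of mountain pass type for the truncated functional $\hat{\varphi}_{\lambda}$ of Proposition~\ref{prop26}. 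The new ingredient coming from $H_5$ is that $\varphi_{\lambda}\in C^2(W^{1,p}_{0}(\Omega))$. The strategy is to produce a fourth (second nodal) solution via the Morse relation \eqref{eq4} applied to $\hat{\varphi}_{\lambda}$, which is coercive (hence satisfies the $C$-condition) and whose critical set satisfies $K_{\hat{\varphi}_{\lambda}}\subseteq[v^{*}_{\lambda},u^{*}_{\lambda}]$ by \eqref{eq73}. We may assume $K_{\hat{\varphi}_{\lambda}}$ is finite, for otherwise \eqref{eq73} and the extremality of $u^{*}_{\lambda},v^{*}_{\lambda}$ already yield infinitely many nodal solutions of \eqref{eqP}.

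The heart of the argument is the computation of the critical groups of $\hat{\varphi}_{\lambda}$ at all of $K_{\hat{\varphi}_{\lambda}}$ and at infinity. From the Claim in the proof of Proposition~\ref{prop26}, $u^{*}_{\lambda}$ and $v^{*}_{\lambda}$ are local minimizers of $\hat{\varphi}_{\lambda}$, so $C_k(\hat{\varphi}_{\lambda},u^{*}_{\lambda})=C_k(\hat{\varphi}_{\lambda},v^{*}_{\lambda})=\delta_{k,0}\ZZ$ for all $k\ge 0$. Since $y_0$ is a critical point of $\hat{\varphi}_{\lambda}$ of mountain pass type, $C_1(\hat{\varphi}_{\lambda},y_0)\neq 0$; combining the homotopy identity \eqref{eq80} with $\varphi_{\lambda}\in C^2$ and the description of the critical groups of a $C^2$-functional at a mountain pass point (Papageorgiou and Smyrlis \cite{26}), one gets $C_k(\hat{\varphi}_{\lambda},y_0)=C_k(\varphi_{\lambda},y_0)=\delta_{k,1}\ZZ$ for all $k\ge 0$. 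For the origin, \eqref{eq72} shows that $\hat{\varphi}_{\lambda}$ and $\varphi_{\lambda}$ agree on a neighbourhood of $0$ in $C^{1}_{0}(\overline{\Omega})$; running the nonlinear regularity bootstrap that underlies \eqref{eq80} along the homotopy $(1-t)\hat{\varphi}_{\lambda}+t\varphi_{\lambda}$, and using that $0$ is isolated in $K_{\varphi_{\lambda}}$ (part of Proposition~\ref{prop13}), one sees that $0$ stays an isolated critical point of the whole homotopy, so $C_k(\hat{\varphi}_{\lambda},0)=C_k(\varphi_{\lambda},0)=\delta_{k,d_m}\ZZ$ for all $k\ge 0$, with $d_m\ge 2$. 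Finally, coercivity of $\hat{\varphi}_{\lambda}$ gives $C_k(\hat{\varphi}_{\lambda},\infty)=\delta_{k,0}\ZZ$ for all $k\ge 0$ (as in \eqref{eq34}).

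With these groups in hand, suppose $K_{\hat{\varphi}_{\lambda}}=\{0,u^{*}_{\lambda},v^{*}_{\lambda},y_0\}$. Evaluating the Morse relation \eqref{eq4} at $t=-1$ yields
$$(-1)^{d_m}+(-1)^0+(-1)^0+(-1)^1=(-1)^0,$$
that is $(-1)^{d_m}=0$, a contradiction. Hence there exists $\hat{y}\in K_{\hat{\varphi}_{\lambda}}$ with $\hat{y}\notin\{0,u^{*}_{\lambda},v^{*}_{\lambda},y_0\}$. By \eqref{eq73} we have $v^{*}_{\lambda}\le\hat{y}\le u^{*}_{\lambda}$, and by \eqref{eq72} $\hat{y}$ solves \eqref{eqP}; nonlinear regularity gives $\hat{y}\in C^{1}_{0}(\overline{\Omega})$. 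Since $\hat{y}\neq u^{*}_{\lambda}$ and $\hat{y}\neq v^{*}_{\lambda}$, the extremality of $u^{*}_{\lambda}$ among the positive and of $v^{*}_{\lambda}$ among the negative solutions of \eqref{eqP} forces $\hat{y}$ to be sign changing. Arguing exactly as in the proof of Proposition~\ref{prop26} (the tangency principle of Pucci and Serrin \cite{29} together with Proposition 2.6 of Arcoya and Ruiz \cite{3}, using the monotonicity coming from $H_5(i)$), the inequalities sharpen to $u^{*}_{\lambda}-\hat{y}\in\mbox{int}\,C_+$ and $\hat{y}-v^{*}_{\lambda}\in\mbox{int}\,C_+$, i.e. $\hat{y}\in\mbox{int}_{C^{1}_{0}(\overline{\Omega})}[v_0,u_0]$. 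Together with $u_0$, $v_0$, $y_0$ this provides the four announced solutions.

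The step I expect to be the main obstacle is the precise determination of $C_k(\hat{\varphi}_{\lambda},y_0)$ and $C_k(\hat{\varphi}_{\lambda},0)$: the truncated functional $\hat{\varphi}_{\lambda}$ is only of class $C^1$, so the $C^2$-Morse description cannot be invoked on it directly, and the sharp groups must be transported from $\varphi_{\lambda}$ by a homotopy-invariance argument whose validity rests on the interplay between the $C^{1}_{0}(\overline{\Omega})$- and $W^{1,p}_{0}(\Omega)$-topologies (precisely the mechanism already exploited for \eqref{eq80}). Once these two groups are secured, the Morse-theoretic bookkeeping and the maximum-principle localization of $\hat{y}$ are routine adaptations of arguments already present in the paper.
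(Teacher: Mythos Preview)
Your proposal is correct and follows essentially the same route as the paper's proof: work with the truncated functional $\hat{\varphi}_{\lambda}$, compute the critical groups at $u^{*}_{\lambda},v^{*}_{\lambda},0,y_0$ and at infinity exactly as you describe, derive a contradiction from the Morse relation at $t=-1$, and then localize the extra critical point $\hat{y}$ inside $\mbox{int}_{C^{1}_{0}(\overline{\Omega})}[v^{*}_{\lambda},u^{*}_{\lambda}]$ via the tangency principle and the Arcoya--Ruiz comparison. The only cosmetic difference is that for $C_k(\hat{\varphi}_{\lambda},0)$ the paper argues directly that $\hat{\varphi}_{\lambda}$ and $\varphi_{\lambda}$ coincide on the order interval $[v^{*}_{\lambda},u^{*}_{\lambda}]$, which is a $C^{1}_{0}(\overline{\Omega})$-neighbourhood of $0$ since $v^{*}_{\lambda}\in-\mbox{int}\,C_+$ and $u^{*}_{\lambda}\in\mbox{int}\,C_+$, and then invokes Proposition~\ref{prop13}; this is precisely the mechanism you sketch via the homotopy and regularity bootstrap.
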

\begin{proof}
	From Theorem \ref{th27} we already know that there exists $\epsilon>0$ such that for all $\lambda\in(\hat{\lambda}_1(p),$\\ $\hat{\lambda}_1(p)+\epsilon)$ has at least three nontrivial solutions
	$$u_0\in \mbox{int}\, C_+,\ v_0\in-\mbox{int}\, C_+\ \mbox{and}\ y_0\in \mbox{int}_{C^{1}_{0}(\overline{\Omega})}[v_0,u_0]\ \mbox{is nodal}.$$
	
	By virtue of Proposition \ref{prop25}, we may assume that $u_0$ and $v_0$ are extremal (that is, $u_0=u^{*}_{\lambda}\in \mbox{int}\, C_+$ and $v_0=v^{*}_{\lambda}\in-\mbox{int}\, C_+$). From the proof of Proposition \ref{prop26} (see the claim), we know that $u_0$ and $v_0$ are local minimizers of the functional $\hat{\varphi}_{\lambda}$. Therefore
	\begin{equation}\label{eq82}
		C_k(\hat{\varphi}_{\lambda},u_0)=C_k(\hat{\varphi_{\lambda}},v_0)=\delta_{k,0}\ZZ\ \mbox{for all}\ k\geq 0.
	\end{equation}
	
	Since $\left.\hat{\varphi}_{\lambda}\right|_{[v_0,u_0]}=\left.\varphi_{\lambda}\right|_{[v_0,u_0]}$ (see (\ref{eq72})) and since $v_0\in -\mbox{int}\, C_+$, $u_0\in \mbox{int}\, C_+$ from Proposition \ref{prop13}, we have
	\begin{equation}\label{eq83}
		C_k(\hat{\varphi}_{\lambda},0)=\delta_{k,d_m}\ZZ\ \mbox{for all}\ k\geq 0,\ \mbox{with}\ d_m\geq 2.
	\end{equation}
	
	From the proof of Proposition \ref{prop26}, we have
	\begin{eqnarray}\label{eq84}
		&&C_k(\hat{\varphi}_{\lambda},y_0)\neq 0,\nonumber\\
		&\Rightarrow&C_k(\hat{\varphi}_{\lambda},y_0)=\delta_{k,1}\ZZ\ \mbox{for all}\ k\geq 0\\
		&&(\mbox{see Papageorgiou and Smyrlis \cite{26} and Papageorgiou and R\u{a}dulescu \cite{23}}).\nonumber
	\end{eqnarray}
	
	Finally, since $\hat{\varphi}_{\lambda}$ is coercive (see (\ref{eq72})), we have
	\begin{equation}\label{eq85}
		C_k(\hat{\varphi}_{\lambda},\infty)=\delta_{k,0}\ZZ\ \mbox{for all}\ k\geq 0.
	\end{equation}
	
	Suppose $K_{\hat{\varphi}_{\lambda}}=\{u_0,v_0,0,y_0\}$. From (\ref{eq82}), (\ref{eq83}), (\ref{eq84}), (\ref{eq85}) and the Morse relation with $t=-1$ (see (\ref{eq4})), we have
	\begin{eqnarray*}
		&&2(-1)^0+(-1)^{d_m}+(-1)^1=(-1)^0,\\
		&\Rightarrow&(-1)^{d_m}=0,\ \mbox{a contradiction.}
	\end{eqnarray*}
	
	So, we have $\hat{u}\in K_{\hat{\varphi}_{\lambda}}\subseteq[v_0,u_0]$ (see (\ref{eq73})), $\hat{y}\notin\{u_0,v_0,0\}$, thus $\hat{y}$ is nodal. Moreover, from the nonlinear regularity theory and reasoning as before (see the proof of Proposition \ref{prop26}), we have
	$$\hat{y}\in \mbox{int}_{C^{1}_{0}(\overline{\Omega})}[v_0,u_0].$$
\end{proof}

\medskip
{\bf Acknowledgments.}
This research was supported by
 the Slovenian Research Agency grants P1-0292,  J1-7025, and J1-6721, and
 the Romanian Research Council  grant CNCS-UEFISCDI-PCCA-23/2014.


\begin{thebibliography}{99}

\bibitem{1} {\sc S. Aizicovici, N.S. Papageorgiou, V. Staicu}, {\em Degree theory for operators of monotone type and nonlinear elliptic equations with inequality constraints}, Memoirs Amer. Math. Soc., Vol. 196, No. \textbf{915} (November 2008).

\bibitem{2} {\sc S. Aizicovici, N.S. Papageorgiou, V. Staicu}, {\em On $p$-superlinear equations with nonhomogeneous differential operator},  NoDEA Nonlinear Differential Equations Appl. \textbf{20} (2013), 151-175.

\bibitem{3} {\sc D. Arcoya, D. Ruiz}, {\em The Ambrosetti-Prodi problem for the $p$-Laplacian operator}, Comm. Partial Diff. Equations {\bf 31} (2006), 849-865.

\bibitem{4} {\sc M. Badiale, D. Lupo}, {\em Some remarks on a multiplicity result by Mawhin-Schmitt}, Acad. Roy. Belg. Bull. Cl. Sci. V, Ser. {\bf 75} (1989), 210-224.

\bibitem{barile} {\sc S. Barile, G.M. Figueiredo}, {\em Existence of least energy positive, negative and nodal solutions for a class of $p\&q$-problems with potentials vanishing at infinity}, J. Math. Anal. Appl. {\bf 427} (2015), 1205-1233.

\bibitem{5} {\sc R. Benguria, H. Brezis, E. Lieb}, {\em The Thomas-Fermi-von Weizs\"{a}cker theory of atoms and molecules}, Comm. Math. Phys. {\bf 79} (1981), 167-189.

\bibitem{carval} {\sc M.L.M. Carvalho, J.V.A. Goncalves, E.D. da Silva}, {\em On quasilinear elliptic problems without the Ambrosetti-Rabinowitz condition}, J. Math. Anal. Appl. {\bf 426} (2015), 466-483.

\bibitem{6} {\sc K.C. Chang}, {\em Methods in Nonlinear Analysis}, Springer, New York, 2005.

\bibitem{chaves} {\sc M.F. Chaves, G. Ercole, O.H. Miyagaki}, {\em Existence of a nontrivial solution for the $(p,q)$-Laplacian in ${\mathbb R}^N$ without the Ambrosetti-Rabinowitz condition}, Nonlinear Anal. {\bf 114} (2015), 133-141.

\bibitem{7} {\sc L. Cherfits, Y. Ilyasov}, {\em On the stationary solutions of generalized reaction-diffusion equations with $p\& q$ Laplacian}, Comm. Pure Appl. Anal. {\bf 4} (2005), 9-22.

\bibitem{8} {\sc R. Chiappinelli, J. Mawhin, R. Nugari}, {\em Bifurcation from infinity and multiple solutions for some Dirichlet problems with unbounded nonlinearities}, Nonlinear Anal. {\bf 18} (1992), 1099-1122.

\bibitem{9} {\sc S. Cingolani, G. Vannella}, {\em Critical groups computations on a class of Sobolev Banach spaces via Morse index},  Ann. Inst. H. Poincar\'e Anal. Non Lin\'eaire {\bf 20} (2003), 271-292.

\bibitem{10} {\sc M. Cuesta, D. de Figueiredo, J.P. Gossez}, {\em The beginning of the Fucik spectrum for the $p$-Laplacian}, J. Differential Equations {\bf 159} (1999), 212-238.

\bibitem{11} {\sc N. Dunford, J. Schwartz}, {\em Linear Operators I}, Wiley Interscience, New York, 1958.

\bibitem{12} {\sc M. Filippakis, A. Kristaly, N.S. Papageorgiou}, {\em Existence of five nonzero solutions with exact sign for a $p$-Laplacian operator}, Discr. Cont. Dynam. Systems {\bf 24} (2009), 405-440.

\bibitem{13} {\sc L. Gasinski, N.S. Papageorgiou}, {\em Nonlinear Analysis}, Chapman \& Hall, CRC, Boca Raton, 2006.

\bibitem{14} {\sc L. Gasinski, N.S. Papageorgiou}, {\em Nodal and multiple constant sign solutions for resonant $p$-Laplacian equations with a nonsmooth potential}, Nonlinear Anal. {\bf 71} (2009), 5747-5772.

\bibitem{15} {\sc O. Ladyzhenskaya, N. Uraltseva}, {\em Linear and Quasilinear Elliptic Equations}, Academic Press, New York, 1968.

\bibitem{16} {\sc G. Lieberman}, {\em Boundary regularity for solutions of degenerate elliptic equations}, Nonlinear Anal. {\bf 12} (1988), 1203-1219.

\bibitem{17} {\sc T.F. Ma, M. Ramos, L. Sanchez}, {\em Multiple solutions for a class of nonlinear boundary value problems near resonance: a variational approach}, Nonlinear Anal. {\bf 30} (1997), 3301-3311.

\bibitem{18} {\sc J. Mawhin, K. Schmitt}, {\em Landesman-Lazer type problems at an eigenvalue of odd multiplicity},  Results Math. {\bf 14} (1988), 138-146.

\bibitem{19} {\sc J. Mawhin, K. Schmitt}, {\em Nonlinear eigenvalue problems with the parameters near resonance}, Ann. Polon. Math. {\bf 51} (1990), 240-248.

\bibitem{20} {\sc D. Mugnai, N.S. Papageorgiou}, {\em Wang's multiplicity result for superlinear $(p,q)$-equations without the Ambrosetti-Rabinowitz condition}, Trans. Amer. Math. Soc. {\bf 366} (2014), 4919-4937.

\bibitem{21} {\sc N.S. Papageorgiou, S. Th. Kiritsi}, {\em Handbook of Applied Analysis}, Springer, New York, 2009.

\bibitem{22} {\sc N.S. Papageorgiou, F. Papalini}, {\em Multiple solutions for nearly resonant nonlinear Dirichlet problems}, Potential Anal. {\bf 37} (2012), 247-279.

\bibitem{23} {\sc N.S. Papageorgiou, V. R\u{a}dulescu}, {\em Qualitative phenomena for some classes of quasilinear elliptic equations with multiple resonance}, Appl. Math. Optim. {\bf 69} (2014), 393-430.

\bibitem{24} {\sc N.S. Papageorgiou, V. R\u{a}dulescu}, {\em Solutions with sign information for nonlinear nonhomogeneous elliptic equations}, Topol. Methods Nonlin. Anal. {\bf 45} (2015), 575-600.

\bibitem{25} {\sc N.S. Papageorgiou, V. R\u{a}dulescu}, {\em Resonant $(p,2)$-equations with asymmetric reaction}, Anal. Appl. {\bf 13} (2015), 481-506.

\bibitem{26} {\sc N.S. Papageorgiou, G. Smyrlis}, {\em On nonlinear nonhomogeneous resonant Dirichlet equations}, Pacific J. Math. {\bf 264} (2013), 421-453.

\bibitem{27} {\sc N.S. Papageorgiou, P. Winkert}, {\em On a parametric nonlinear Dirichlet problem with subdiffusive and equidiffusive reaction}, Adv. Nonlin. Studies {\bf 14} (2014), 565-592.

\bibitem{28} {\sc N.S. Papageorgiou, P. Winkert}, {\em Resonant $(p,2)$-equations with concave terms}, Applicable Anal. {\bf 94} (2015), 342-360.

\bibitem{29} {\sc P. Pucci, J. Serrin}, {\em The Maximum Principle}, Birkh\"auser, Basel, 2007.

\bibitem{30} {\sc M. Ramos, L. Sanchez}, {\em A variational approach to multiplicity in elliptic problems near resonance}, Proc. R. Soc. Edinburgh {\bf 127A} (1997), 385-394.

\end{thebibliography}
\end{document}